\newtheorem{theorem}{Theorem}
\newtheorem*{theorem*}{Theorem}
\newtheorem{lemma}{Lemma}
\newtheorem{corollary}{Corollary}
\newtheorem*{corollary*}{Corollary}
\newtheorem{proposition}{Proposition}
\newtheorem{claim}{Claim}
\newtheorem*{claim*}{Claim}
\newtheorem*{MoserLemma}{Moser's Lemma}
\newtheorem*{theorem2}{Theorem 2A}
\newtheorem*{theorem3}{Theorem 3A}
\newtheorem*{theorem2prime}{Theorem 2B}
\newtheorem*{theorem3prime}{Theorem 3B}
\theoremstyle{definition}
\newtheorem{definition}{\bf Definition}
\newtheorem*{definition*}{\bf Definition}
\newtheorem{remark}{\bf Remark}
\newtheorem*{remark*}{\bf Remark}
\newtheorem*{example*}{\bf Example}
\newcommand{\loc}{{\rm loc}}
\newcommand{\mydiv}{{\rm div\,}}
\begin{document}

\title[Parabolic equations with singular (form-bounded) vector fields]{Heat kernel bounds for parabolic equations with singular (form-bounded) vector fields}

\author{D.\,Kinzebulatov}

\address{Universit\'{e} Laval, D\'{e}partement de math\'{e}matiques et de statistique, 1045 av.\,de la M\'{e}decine, Qu\'{e}bec, QC, G1V 0A6, Canada}

\email{damir.kinzebulatov@mat.ulaval.ca}

\thanks{
The research of D.K. was supported by grants from NSERC and FRQNT}

\author{Yu.\,A.\,Sem\"{e}nov}

\address{University of Toronto, Department of Mathematics, 40 St.\,George Str, Toronto, ON, M5S 2E4, Canada}

\email{semenov.yu.a@gmail.com}

\keywords{Heat kernel bounds, De Giorgi-Nash theory, Harnack inequality, strong solutions, singular drift}

\subjclass[2010]{35K08, 47D07 (primary), 60J35 (secondary)}

\begin{abstract}
We consider Kolmogorov operator $-\nabla \cdot a \cdot \nabla + b \cdot \nabla$ with measurable uniformly elliptic matrix $a$ and prove Gaussian lower and upper bounds on its heat kernel under minimal assumptions on the vector field $b$ and its divergence ${\rm div\,}b$. More precisely, we prove:

(1) Gaussian lower bound, provided that ${\rm div\,}b \geq 0$, and $b$ is in the class of form-bounded vector fields (containing e.g.\,the class $L^d$, the weak $L^d$ class, as well as some vector fields that are not even in $L_{\rm loc}^{2+\varepsilon}$, $\varepsilon>0$); in these assumptions, the Gaussian upper bound is in general invalid;

(2) Gaussian upper bound, provided that $b$ is form-bounded, and the positive part of ${\rm div\,}b$ is in the Kato class; in these assumptions, the Gaussian lower bound is in general invalid;

(3) Gaussian upper and lower bounds, provided that $b$ is form-bounded, ${\rm div\,}b$ is in the Kato class;

(4) A priori Gaussian upper and lower bounds, provided that $b$ is in a large class containing the class of form-bounded vector fields,  ${\rm div\,}b$ is in the Kato class.
\end{abstract}

\maketitle

\section{Introduction and main results}

\label{intro_sect}

\noindent\textbf{1.~}The subject of this paper is Gaussian lower and upper bounds on the heat kernel $u(t,x;s,y)$, $t>s$, of the parabolic equation
\begin{equation}
\label{eq}
(\partial_t - \nabla_x \cdot a \cdot \nabla_x + b \cdot \nabla_x)u(t,x)=0 \quad \text{ on } \mathbb [0,\infty[\times \mathbb R^d, \; d \geq 3,
\end{equation}
\begin{equation}
\tag{$H_{\sigma,\xi}$}
\begin{array}{c}
a=a^*:\mathbb R^d \rightarrow \mathbb R^d \otimes \mathbb R^d, \\
 \sigma I \leq a(x) \leq \xi I \quad \text{ for a.e. } x \in \mathbb R^d \quad \text{ for constants $0<\sigma<\xi<\infty$},
\end{array}
\end{equation}
under general assumptions on $$b=(b_i)_{i=1}^d:\mathbb R^d \rightarrow \mathbb R^d \quad \text{ and } \quad {\rm div\,}b=\sum_{i=1}^d\nabla_{x_i}b_i$$ that admit critical-order singularities.

The problem of existence of sharp elementary bounds on the heat kernel of the parabolic equation \eqref{eq}, and the ensuing  regularity properties of the heat kernel,
have been studied for several decades, with the principal breakthrough due to E.\,De Giorgi  \cite{DG} and J.\,Nash \cite{N} who treated the case $b=0$. D.\,G.\,Aronson \cite{A} established a two-sided Gaussian bound on $u$ of \eqref{eq} in the case $b=b_1+b_2$ with $|b_1| \in L^p$, $p>d$ and $b_2 \in L^\infty$. It was demonstrated in \cite{S,KiS}, that the Gaussian bounds on $u$ depend, in fact, on a much finer integral characteristics of $b$ than $\|b_1\|_p$, $p>d$ and $\|b_2\|_\infty$, that is, on the Nash norm of $b$, which allows to treat vector fields that may not even be in $L^{2+\varepsilon}_{\loc}$ for a given $\varepsilon>0$. This line of research is motivated, in particular, by the desire to find the quantitative relationship between the integral characteristics of $a$, $b$ and the regularity properties of $u$.
Another motivation for studying discontinuous $a$ and singular (i.e.\,locally unbounded) $b$ comes from physical applications. These applications make relevant assumptions on the integral properties of ${\rm div\,}b$. In presence of such assumptions, as is well known, one should be able to treat considerably more singular $b$. This is the subject of this paper.
More precisely, below we show that the heat kernel of \eqref{eq} satisfies:

1) a Gaussian lower bound, provided that
$$
b \in \mathbf{F}_\delta\;\;  (\text{the class of form-bounded vector fields, see below) \quad for some } \delta \in]0,4\sigma^2[
$$
and
$$
{\rm div\,}b \geq 0
$$
(let us note that in general a Gaussian upper bound is not valid under these assumptions);

\smallskip

2) a Gaussian upper bound, provided that
$
b \in \mathbf{F}_\delta
$
for some $\delta<\infty$,
and
$$
({\rm div\,}b)_+ \in \mathbf{K}^d_{\nu}\;\;  (\text{the Kato class, see below}) \quad \text{ for } \nu \text{ sufficiently small}
$$
(in general, there is no Gaussian lower bound under these assumptions);

\smallskip

3) two-sided Gaussian bound, provided that
$$
b \in \mathbf{F}_\delta \quad \text{ for some } \delta<\infty,
$$
and
$$
{\rm div\,}b \in \mathbf{K}^d_{\nu}\;\;  \text{ for } \nu \text{ sufficiently small}.
$$

\smallskip

4) a priori two-sided Gaussian bound, provided that 
$$
b \in \mathbf{MF}_\delta \;\; (\text{the multiplicative class of form-bounded vector fields, see below)}
$$
for some $\delta<\infty$, and
$
{\rm div\,}b \in \mathbf{K}^d_{\nu}$ with $\nu$ sufficiently small.

\medskip

The closest to ours results were obtained in the case $a=I$ in \cite{LZ}, see detailed comparison below.
It should be added that in the case ${\rm div\,}b=0$ one can  relax the assumptions on $b$ even further (although then the corresponding bounds become, in general, non-Gaussian), see \cite{Z2}. See also \cite{Z2_,QX,QX2}.

\medskip

In what follows, $L^p \equiv L^p(\mathbb R^d,dx)$, $L^p_{\loc} \equiv L^p_{\loc}(\mathbb R^d,dx)$.
 
\begin{definition*}
A vector field $b:\mathbb R^d \rightarrow \mathbb R^d$ is said to be form-bounded if $|b| \in L^2_\loc$ and there exist constants $\delta > 0$ and $c(\delta) \geq 0$ such that 
\begin{equation*}
\|b f\|_2^2 \leq \delta \|\nabla f\|_2^2 + c(\delta) \|f\|_2^2, \quad f \in W^{1,2} \equiv W^{1,2}(\mathbb R^d)
\end{equation*}
or, shortly,
$$
|b|^2 \leq \delta (-\Delta) + c(\delta) \quad (\text{in the sense of quadratic forms})
$$
(written as $b \in \mathbf{F}_{\delta}=\mathbf{F}_\delta(-\Delta)$).
\end{definition*}

\begin{definition*}
A vector field $b:\mathbb R^d \rightarrow \mathbb R^d$ is in the multiplicative class of form-bounded vector fields if $|b| \in L^1_\loc$ and there exist constants $\delta>0$ and $c(\delta) \geq 0$ such that 
$$
|\langle bf,f\rangle| \leq \delta \sqrt{\|\nabla f\|^2_2+c(\delta)\|f\|^2_2}\|f\|_2, \quad f \in W^{1,2}
$$
(written as $b \in \mathbf{MF}_\delta$).
\end{definition*}

Here and below,
$$
\langle f\rangle:=\int_{\mathbb R^d}fdx, \quad \langle f,g\rangle:=\langle f\bar{g}\rangle.
$$

\begin{definition*}
A potential $V\in L^1_\loc$ is said to be in the Kato class if there exist constants $\nu>0$ and $\lambda \equiv \lambda(\nu) \geq 0$ such that 
$$
\|(\lambda-\Delta)^{-1}|V|\|_\infty \leq \nu$$
(written as $V \in \mathbf{K}^d_\nu$).
\end{definition*}

We first comment on the classes $\mathbf{K}^d_\nu$ and $\mathbf{F}_\delta$. Both classes have been studied in the literature. 
The class $\mathbf{K}^d_\nu$ was introduced in 1961 by M.S.\,Birman \cite[Sect.\,2]{B}  as an elementary sufficient condition for the form-boundedness:
$$\text{if } V \equiv |b|^2 \in \mathbf{K}^d_\delta, \text{ then } b \in \mathbf{F}_\delta.
$$
Indeed, let $f\in L^2$, then
\begin{align*}
\|(\lambda-\Delta)^{-\frac{1}{2}}V^\frac{1}{2}f\|_2^2 & =\langle V^\frac{1}{2}f,(\lambda-\Delta)^{-1}V^\frac{1}{2}f\rangle\\
&=\langle\langle V^\frac{1}{2}(x)f(x)[(\lambda-\Delta)^{-1}(y,x)]^\frac{1}{2}[(\lambda-\Delta)^{-1}(x,y)]^\frac{1}{2}V^\frac{1}{2}(y)\overline{f(y)}\rangle_x\rangle_y\\
&\leq \big[\langle\langle(\lambda-\Delta)^{-1}(y,x)V(x)|f(y)|^2\rangle_x\rangle_y\big]^\frac{1}{2}\big[\langle\langle(\lambda-\Delta)^{-1}(x,y)V(y)|f(x)|^2\rangle_y\rangle_x\big]^\frac{1}{2}\\
&=\langle |f|^2(\lambda-\Delta)^{-1} V\rangle\leq \|(\lambda-\Delta)^{-1}V\|_\infty\|f\|_2^2.
\end{align*}

Let us mention some examples.

If $V  \in L^p + L^\infty$, $p>\frac{d}{2},$ then $V \in \mathbf{K}^d_\nu$ with arbitrarily small $\nu$. For every $\varepsilon>0$ there exist $V \in \mathbf{K}^d_\nu$ such that $V \not\in L^{1+\varepsilon}_{\loc}$.

If $$|b| \in L^d+L^\infty,$$ then $b \in \mathbf{F}_\delta$ with $\delta$ that can be chosen arbitrarily small (via the Sobolev Embedding Theorem). The class $\mathbf{F}_\delta$ also contains vector fields having critical-order singularities, 
e.g.\,
$$
b(x)=\pm \sqrt{\delta}\frac{d-2}{2}|x|^{-2}x \in \mathbf{F}_\delta \quad \text{with $c(\delta)=0$}
$$
(by the Hardy inequality $\frac{(d-2)^2}{4}\||x|^{-1}f\|_2^2 \leq \|\nabla f\|_2^2$, $f \in W^{1,2}$). More generally, $\mathbf{F}_\delta$ contains vector fields $b$ with $|b|$ in the weak $L^d$ space, the Campanato-Morrey class, the Chang-Wilson-Wolff class.  We refer to \cite[Sect.\,4]{KiS2} for more examples of form-bounded vector fields and a detailed discussion of class $\mathbf{F}_\delta$.

\medskip

The class $\mathbf{MF}_\delta$ is the largest. It contains  the class of weakly form-bounded fields $\mathbf{F}^{\scriptscriptstyle \frac{1}{2}}_\delta$, which consists of the vector fields $b$ such that $|b| \in L^1_{\loc}$ and, for some $\lambda=\lambda(\delta) \geq 0$,
$$
\||b|^{\frac{1}{2}}f\|^2_2 \leq \delta \|(\lambda-\Delta)^{\frac{1}{4}}f\|^2_2, \quad f \in \mathcal W^{\frac{1}{2},2} \;\;(\text{the Bessel potential space}).
$$
(The class $\mathbf{F}^{\scriptscriptstyle \frac{1}{2}}_\delta$ provides $\mathcal W^{1+\frac{1}{q},p}$-regularity theory for the operator $-\Delta + b \cdot \nabla$ for $p$ large and $q>p$, see \cite{KiS2} for details.) Indeed, for $f\in W^{1,2}$,
\begin{align*}
|\langle bf,f\rangle| &\leq \langle|b|f,f\rangle \leq \delta \langle (\lambda-\Delta)^{\frac{1}{2}}f,f \rangle \leq \delta \|(\lambda-\Delta)^{\frac{1}{2}}f\|_2\|f\|_2 \\
& = \delta\sqrt{\|\nabla f\|_2^2 + \lambda\|f\|_2^2}\|f\|_2.
\end{align*}
Note that, by interpolation, $\mathbf{F}_\delta \subsetneq \mathbf{F}^{\scriptscriptstyle \frac{1}{2}}_\delta$, furthermore, there exist $b \in \mathbf{F}^{\scriptscriptstyle \frac{1}{2}}_\delta$ such that $|b| \not \in L^{1+\varepsilon}_{\loc}$, $\varepsilon>0$ (and so these vector fields are in $\mathbf{MF}_\delta$). 
Another example is: if $$ b=\nabla_{x_i}\mathsf{f} \;\;(\text{in the sense of distributions}) \quad \text{ where $\mathsf{f}:\mathbb R^d \rightarrow \mathbb R^d$, $|\mathsf{f}| \in L^\infty$},$$ then, using the integration by parts, one has $b \in \mathbf{MF}_\delta$ with $\delta = 2\|\mathsf{f}\|_\infty$, $c(\delta)=0$.

The class $\mathbf{MF}_\delta$ (with $|b|$ in the LHS) was introduced in \cite{S2} as a class providing two-sided Gaussian bound on the heat kernel of $-\nabla \cdot a \cdot \nabla + b \cdot \nabla$ in the case ${\rm div\,}b=0$.

\medskip

\noindent\textbf{2.~}First, we will establish \textit{a priori} bounds on the heat kernel of $-\nabla \cdot a \cdot \nabla + b \cdot \nabla$, i.e.\,assuming additionally that $a$, $b$ are $C^\infty$ smooth, $|b|$ and ${\rm div\,}b$ are bounded. These bounds depend only on the dimension $d$, the ellipticity constants $\sigma$, $\xi$, the form-bound $\delta$ of $b$ (or the multiplicative form-bound $\delta$ of $b$) and the Kato relative bound $\nu$ of ${\rm div\,}b$, but not on the smoothness of $a$, $b$. 

To treat general measurable $a \in (H_{\sigma,\xi})$ and $b \in \mathbf{F}_\delta$ 
with ${\rm div\,}b \in \mathbf{K}^d_\nu$, 
we fix the following smooth approximations of $a$, $b$.
Set  $$a_{\varepsilon_1}:=E_{\varepsilon_1}a, \quad \varepsilon_1>0,$$
where $E_\varepsilon f:= e^{\varepsilon\Delta}f$ ($\varepsilon>0$), the De Giorgi mollifier of $f$.
It is easily seen that $a_{\varepsilon_1}$ are $C^\infty$ smooth and belong to $(H_{\sigma,\xi})$ for all $\varepsilon_1>0$.
We define
$$
b_\varepsilon:=E_\varepsilon b.
$$
In Section \ref{apost_sect0} we show that the vector fields $b_\varepsilon \in [L^\infty]^d$, $C^\infty$ smooth and are in $\mathbf{F}_\delta$ with the same  $c(\delta)$. (The proof of an analogous result for $b \in  \mathbf{MF}_\delta$ is given in Section \ref{proof_prime}.) Moreover, if ${\rm div\,}b \in \mathbf{K}^d_\nu$, then, for all $\varepsilon>0$, ${\rm div\,}b_\varepsilon \in L^\infty$, $C^\infty$ smooth and
$$
{\rm div\,}b_\varepsilon = ({\rm div\,}b)_\varepsilon \in \mathbf{K}^d_\nu 
$$
 with the same $\lambda=\lambda(\nu)$, see Section \ref{kato_rem_sect}. This choice of a regular approximation of $b$ is dictated by the need to control both $b_\varepsilon$ and ${\rm div\,}b_\varepsilon$ at the same time. (A straightforward approach of using cut-off functions to construct $b_\varepsilon$ leads to, generally speaking, loss of control over the Kato relative bound of ${\rm div\,}b_\varepsilon$. On the other hand, since $b \in \mathbf{F}_\delta$ does not entail $|b| \in L^2 + L^\infty$, the fact that $b_\varepsilon$ defined as above are  bounded requires justification.
A careful choice of appropriate smooth approximation of $b$ is needed even if $a=I$.

In what follows, we put
$$
\Lambda_{\varepsilon_1,\varepsilon}:=-\nabla \cdot a_{\varepsilon_1} \cdot \nabla + b_\varepsilon \cdot \nabla$$ 
with domain $D(\Lambda_{\varepsilon_1,\varepsilon})=W^{2,p}$ for $p$ that will be clear from the context.

\medskip

\noindent\textbf{3.~}We now state the main results of this paper in detail.
Put
 $$k_\mu(t,x,y)\equiv k(\mu t,x,y):=(4\pi\mu t)^{-\frac{d}{2}}e^{ - \frac{|x-y|^2}{4\mu t}}, \quad \mu>0.$$

\begin{theorem}[Lower bound]
\label{second_thm}
Let $d \geq 3$. Assume that $b \in \mathbf{F}_\delta=\mathbf{F}_\delta(-\Delta)$ for some $0<\delta<4\sigma^2$, and
$$
{\rm div\,}b \geq 0 \quad \text{(in the sense of tempered distributions).}
$$ 
Then, for each $p\in ]\frac{2}{2-\sqrt{\sigma^{-2}\delta}},\infty[$, the limit 
$$
s{\mbox-}L^p\mbox{-}\lim_{\varepsilon \downarrow 0} \lim_{\varepsilon_1 \downarrow 0}e^{-t\Lambda_{\varepsilon_1,\varepsilon}} \quad (\text{locally uniformly in $t \geq 0$})
$$
exists and determines in $L^p$ a positivity preserving $L^\infty$-contraction, quasi contraction $C_0$ semigroup, say, $e^{-t\Lambda}$. The operator $\Lambda$ is an appropriate operator realization of the formal operator $-\nabla \cdot a \cdot \nabla + b \cdot \nabla$ in $L^p$.

The semigroup $e^{-t\Lambda}$ is a semigroup of integral operators. Its integral kernel $u(t,x;s,y)=e^{-(t-s)\Lambda}(x,y)$  ($\equiv$ the heat kernel of $\Lambda$) satisfies the Gaussian lower bound
\begin{equation*}
\tag{{\rm LGB}}
\label{i_LGB}
  c_1 k_{c_2}(t-s;x-y) e^{-c_0 (t-s)} \leq u(t,x;s,y)
\end{equation*}
for a.e.\,$x, y \in \mathbb{R}^d$ and $0 \leq s < t < \infty,$ with constants $c_0 \geq 0$ and $c_i>0$ ($i=1,2$) that depend only of $d,\sigma,\xi,\delta,c(\delta)$. If $c(\delta)=0$ then $c_0=0$.
\end{theorem}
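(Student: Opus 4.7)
The plan is to establish the Gaussian lower bound first \emph{a priori} for the smoothed operators $\Lambda_{\varepsilon_1,\varepsilon}=-\nabla\cdot a_{\varepsilon_1}\cdot\nabla+b_\varepsilon\cdot\nabla$, with constants depending only on $d,\sigma,\xi,\delta,c(\delta)$, and then pass to the limit $\varepsilon_1,\varepsilon\downarrow 0$. On the smoothed side the properties quoted above give that $b_\varepsilon$ is $C^\infty\cap L^\infty$, form-bounded with the same $(\delta,c(\delta))$, and $\mathrm{div}\,b_\varepsilon=E_\varepsilon(\mathrm{div}\,b)\ge 0$ (preserved by the positive mollifier $E_\varepsilon$); classical theory then makes $u_{\varepsilon_1,\varepsilon}$ smooth and strictly positive, so the whole task is to produce \emph{quantitative} bounds that do not see $\|b_\varepsilon\|_\infty$.

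On the sesquilinear form $\eta(f,g)=\int a_{\varepsilon_1}\nabla f\cdot\nabla\bar g+\int(b_\varepsilon\cdot\nabla f)\bar g$ over $W^{1,2}$, the identity $\mathrm{Re}\int(b_\varepsilon\cdot\nabla f)\bar f=-\tfrac12\int|f|^2\,\mathrm{div}\,b_\varepsilon\le 0$ makes the accretive part at least $\sigma\|\nabla f\|_2^2$, while form-boundedness controls the imaginary part. The analogous $L^p$ computation with $|f|^{p-2}\bar f$ in place of $\bar f$, coupled with Cauchy--Schwarz on the imaginary drift cross term, picks out exactly $p>\tfrac{2}{2-\sqrt{\sigma^{-2}\delta}}$ as the range of allowed $L^p$ generators, with norm bounds uniform in $\varepsilon,\varepsilon_1$. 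Positivity preservation comes from the smooth parabolic maximum principle; $\Lambda_{\varepsilon_1,\varepsilon}1=0$ yields $L^\infty$-contraction and conservation $\int u_{\varepsilon_1,\varepsilon}(t,x,y)\,dy=1$. Uniform resolvent bounds plus a standard Trotter--Kato argument then give the strong $L^p$-limit $e^{-t\Lambda}$ asserted in the theorem.

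For the a priori Gaussian lower bound on $u_{\varepsilon_1,\varepsilon}$ I would follow a three-step Nash--Fabes--Stroock scheme. (i) Use form-boundedness to absorb the drift into $\int a\nabla f\cdot\nabla\bar f$, derive a Nash inequality, and obtain an on-diagonal upper bound $u_{\varepsilon_1,\varepsilon}(t,x,y)\le C_1 t^{-d/2}e^{c_0 t}$; the same argument for the adjoint $\Lambda^*=-\nabla\cdot a\cdot\nabla-b_\varepsilon\cdot\nabla-\mathrm{div}\,b_\varepsilon$ (where the potential $-\mathrm{div}\,b_\varepsilon\le 0$ is harmless) gives the analogous on-diagonal bound for the adjoint kernel $u^*$. (ii) Combine with conservation via Cauchy--Schwarz in $u(2t,x,x)=\int u(t,x,z)\,u^*(t,x,z)\,dz$ to extract an on-diagonal lower bound $u_{\varepsilon_1,\varepsilon}(2t,x,x)\ge c_2 t^{-d/2}e^{-c_0 t}$. (iii) Upgrade to the Gaussian off-diagonal shape by the Davies--Fabes--Stroock exponential-weight / logarithmic method: conjugation $u\mapsto e^{\alpha}u$ by Lipschitz $\alpha$ with $\|\nabla\alpha\|_\infty\le\beta$ costs only an $e^{c\beta^2 t}$ factor in weighted $L^2$ norms, and testing the parabolic equation for $\log u$ produces BMO control of $\log u$ with constants depending only on $\delta,c(\delta),\sigma,\xi$, so that John--Nirenberg (or an equivalent Harnack chain) converts the on-diagonal lower bound into the full Gaussian lower bound. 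Passage to the limit is then routine: De Giorgi--Nash interior H\"older regularity, quantitative in the same rough parameters, gives locally uniform a.e.\ convergence of $u_{\varepsilon_1,\varepsilon}$ to $u$, preserving the pointwise lower bound. The main obstacle is step (iii): executing the Davies-weight / log-test-function computation when $b$ is merely form-bounded demands that every cross term in $b_\varepsilon$ or $\mathrm{div}\,b_\varepsilon$ be controlled strictly by $\delta,c(\delta)$ and the sign of $\mathrm{div}\,b_\varepsilon$, never by $\|b_\varepsilon\|_\infty$, which would otherwise break the passage to the limit.
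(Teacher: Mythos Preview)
Your overall plan --- prove an a priori lower bound for $\Lambda_{\varepsilon_1,\varepsilon}$ with constants depending only on $d,\sigma,\xi,\delta,c(\delta)$, then pass to the limit --- matches the paper's, and the semigroup construction is indeed handled essentially as you describe (the paper cites \cite{KiS2} for this step). The genuine gap is in your three-step scheme for the a priori bound, and it appears already at step~(i), not at step~(iii).

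Under the hypotheses of the theorem there is \emph{no} generic on-diagonal upper bound $u(t,x,y)\le C_1 t^{-d/2}e^{c_0t}$. The paper's remark after the theorem makes this explicit: for $a=I$ and $b(x)=\sqrt{\delta}\,\tfrac{d-2}{2}\,|x|^{-2}x$ (so $\mathrm{div}\,b>0$), the heat kernel satisfies $u(t,x;0,y)\asymp k_c(t,x-y)\,\varphi_t(y)$ with $\varphi_t(y)=|t^{-1/2}y|^{-\sqrt{\delta}(d-2)/2}$ for $|y|\le\sqrt t$, which blows up as $y\to 0$ for every $\delta>0$. Your argument for the adjoint does not rescue this: writing $\Lambda^*=A-b_\varepsilon\cdot\nabla-\mathrm{div}\,b_\varepsilon$ and discarding the nonpositive potential still leaves a drift-diffusion with drift $-b_\varepsilon\in\mathbf F_\delta$, whose $L^p$ energy estimate is coercive only for $p\le p_c'=2/\sqrt{\delta_a}$, so one never reaches $L^1\!\to\! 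L^\infty$. Since step~(i) fails, step~(ii) cannot start; and step~(iii) in any case mixes Davies exponential weights (an upper-bound device) with a BMO/John--Nirenberg route that would require quantitative control of $\mathrm{div}\,b$ you do not possess here.

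What the paper does instead is to work only with the bound that \emph{does} survive with generic constants, namely the integral bound
\[
\sup_{x}\big\langle u^{p'}(t,x;s,\cdot)\big\rangle\le \hat c\,(t-s)^{-\frac{d}{2(p-1)}},\qquad p\ge p_c:=\frac{2}{2-\sqrt{\delta_a}},
\]
obtained by estimating $\langle b\cdot\nabla u,u^{p-1}\rangle=\tfrac2p\langle b\,u^{p/2},\nabla u^{p/2}\rangle$ via form-boundedness (never integrating by parts to $\mathrm{div}\,b$). From this one runs Nash's original machinery: moment bound $M(s)\asymp\sqrt{t-s}$, entropy estimate $|Q(s)-\tfrac d2\log(t-s)|\le C$, and then Nash's $\hat G$-bound $\hat G(s)=\langle k_\beta(t-s,o-\cdot)\log u(t,x;s,\cdot)\rangle\ge -\tfrac d2\log(t-s)-C$; the integral bound with exponent $p'$, $p\ge p_c$, replaces the pointwise upper bound that the classical argument uses, and this is precisely where the restriction $\delta_a<4$ enters. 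The same is carried out for the kernel $u_*$ of $\Lambda_*=A+\nabla\cdot b$. The hypothesis $\mathrm{div}\,b\ge 0$ enters \emph{only} through the Duhamel comparison $u_*\le u$, after which Jensen applied to the reproduction identity gives $\log u(t,x;s,y)\ge \hat G(t_s)+G(t_s)+\text{const}$, a near-diagonal lower bound that is then chained to the full Gaussian one.
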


\setcounter{theorem}{3}

\begin{remark*} 1. In Theorem \ref{second_thm}, the Gaussian lower bound holds without any integrability assumptions on ${\rm div\,}b$, while the Gaussian upper bound is invalid. To the best of our knowledge, this is the first result of this type. 

2. Let us illustrate the fact that in the assumptions of Theorem \ref{second_thm} the heat kernel in general  does not satisfy a Gaussian upper bound.
Let $u(t,x;s,y)$ be the heat kernel of the operator $-\Delta + b \cdot \nabla$ with $b(x)=\sqrt{\delta}\frac{d-2}{2}|x|^{-2}x \in \mathbf{F}_\delta$, so ${\rm div}\,b=\sqrt{\delta}\frac{(d-2)^2}{2}|x|^{-2}$ is positive. If $\delta<4$, then $u(t,x;s,y)$ satisfies the two-sided bound
$$
c_1k_{c_2}(t-s;x-y)\varphi_{t-s}(y) \leq u(t,x;s,y) \leq c_3k_{c_4}(t-s;x-y)\varphi_{t-s}(y),
$$
where a positive singular weight $\varphi_{t} \in C^2(\mathbb R^d - \{0\})$  is uniformly bounded away from zero on $\mathbb R^d$ and satisfies $$\varphi_{t}(y) = |t^{-\frac{1}{2}}y|^{-\sqrt{\delta}\frac{d-2}{2}} \quad \text{ for }|y| \leq t^{\frac{1}{2}},\;t>0,$$
see \cite{MSS, MSS2}, see also \cite[Sect.\,4]{MS2}.
\end{remark*}

 Set $${\rm div\,}b_+:=0\vee{\rm div\,}b, \quad {\rm div\,}b_-:={\rm div\,}b_+-{\rm div\,}b.$$
In the next theorem we relax the assumptions ``${\rm div}b_-=0$'' and ``$\delta<4\sigma^2$'' of Theorem \ref{second_thm}, but impose a condition on ${\rm div\,}b_+$. 

\begin{theorem2}[Upper bound]
Let $d \geq 3$. Assume that

{\rm (\textit{1})}
$b \in \mathbf{F}_\delta$ for some $\delta<\infty$.

{\rm (\textit{2})}  ${\rm div\,}b_- \in L^1_\loc$ and $e^{\varepsilon\Delta}{\rm div\,}b_- \in L^\infty$ for each $\varepsilon>0$. 

{\rm (\textit{3})} ${\rm div\,}b_+ \in \mathbf{K}^d_\nu$ for some small $\nu$ dependent on $d,\sigma,\xi,\delta$.

Then the limit 
$$
s{\mbox-}L^2\mbox{-}\lim_{\varepsilon \downarrow 0} \lim_{\varepsilon_1 \downarrow 0}e^{-t\Lambda_{\varepsilon_1,\varepsilon}} \quad (\text{locally uniformly in $t \geq 0$}),
$$
exists and determines a positivity preserving $L^\infty$-contraction, quasi bounded $C_0$ semigroup of integral operators, say, $e^{-t\Lambda}$. Its integral kernel $u(t,x;s,y)$ satisfies the Gaussian upper bound
\begin{equation}
\tag{{\rm UGB}}
\label{i_UGB}
u(t,x;s,y) \leq c_3 k_{c_4}(t-s;x-y) e^{c_5 (t-s)}
\end{equation}
for a.e.\,$x, y \in \mathbb{R}^d$ and $0 \leq s < t < \infty$,
with constants  $c_3, c_4$ dependent on $d,\sigma,\xi,\delta,\nu$ and $c_5$ on $c(\delta),\lambda(\nu)$. If $c(\delta)=\lambda(\nu)=0$, then $c_5=0$.
\end{theorem2}

\begin{remark*} 1. In the assumptions of Theorem 2A, the heat kernel in general does not satisfy a Gaussian lower bound. For instance, the heat kernel $ u(t,x;s,y)$ of $-\Delta - b \cdot \nabla$ with $b(x)=\sqrt{\delta}\frac{d-2}{2}|x|^{-2}x \in \mathbf{F}_\delta$ satisfies
$$
c_1k_{c_2}(t-s;x-y)\psi_{t-s}(y) \leq u(t,x;s,y) \leq c_3k_{c_4}(t-s;x-y)\psi_{t-s}(y)
$$
with positive bounded weight $\psi_t(y)$ that vanishes at $y=0$, see \cite{MSS, MSS2}.

2. One can provide a number of sufficient conditions for the assumption ``$e^{\varepsilon\Delta}{\rm div\,}b_- \in L^\infty$ for each $\varepsilon>0$" to hold. For example, this assumption is satisfied if ${\rm div\,}b_- \in L^1 + L^\infty$ or if $\mathbf{1}_{B^c(0,R)}{\rm div\,}b_-$ is form-bounded
$$
\langle \mathbf{1}_{B^c(0,R)}{\rm div\,}b_-,|f|^2\rangle \leq \kappa \langle |\nabla f|^2 \rangle + c(\kappa,R) \langle |f|^2\rangle, \quad f \in W^{1,2} \quad \text{for some  $\kappa, R<\infty$},
$$
where $B^c(0,R):=\mathbb R^d - B(0,R)$.
(Indeed, we represent ${\rm div\,}b_-=\mathbf{1}_{B(0,R)}{\rm div\,}b_- + \mathbf{1}_{B^c(0,R)}{\rm div\,}b_-$, where the first term is in $L^1$ and so clearly $e^{\varepsilon\Delta}\mathbf{1}_{B(0,R)}{\rm div\,}b_- \in L^\infty \cap C^\infty$, while
$e^{\varepsilon\Delta} \mathbf{1}_{B^c(0,R)}{\rm div\,}b_- \in L^\infty \cap C^\infty$ by
repeating the proof of Claim \ref{claim1} below.) 
\end{remark*}

\begin{theorem2prime}[Upper bound]
Let $d \geq 3$. Assume that

{\rm (\textit{1})}
$b \in \mathbf{MF}_\delta$ for some $\delta<\infty$.

{\rm (\textit{2})}  ${\rm div\,}b_- \in L^1_\loc$ and $e^{\varepsilon\Delta}{\rm div\,}b_- \in L^\infty$ for each $\varepsilon>0$. 

{\rm (\textit{3})} ${\rm div\,}b_+ \in \mathbf{K}^d_\nu$ for some small $\nu$ dependent on $d,\sigma,\xi,\delta$.

Then $u_{\varepsilon_1,\varepsilon}(t,x;s,y) \equiv e^{-(t-s)\Lambda_{\varepsilon_1,\varepsilon}}(x,y)$ satisfy, for all $\varepsilon_1$, $\varepsilon>0$, the Gaussian upper bound
\begin{equation}
\tag{{\rm UGB}'}
\label{i_UGBprime}
u_{\varepsilon_1,\varepsilon}(t,x;s,y) \leq c_3 k_{c_4}(t-s;x-y) e^{c_5 (t-s)}
\end{equation}
for a.e.\,$x, y \in \mathbb{R}^d$ and $0 \leq s < t < \infty$,
with constants  $c_3, c_4$ dependent on $d,\sigma,\xi,\delta,\nu$ and $c_5$ on $c(\delta),\lambda(\nu)$ (but not on $\varepsilon_1$, $\varepsilon$). If $c(\delta)=\lambda(\nu)=0$, then $c_5=0$.
\end{theorem2prime}

Armed with the upper bound \eqref{i_UGBprime}, one can construct a limiting heat kernel using a standard argument appealing to the weak compactness in the space of measures and the Radon-Nikodym Theorem.
If $a=I$, the semigroups converge strongly in $L^2$ as in Theorem 2A (following closely the corresponding part of the proof of Theorem 2A). 

\medskip

In the next theorem we impose a more restrictive condition on ${\rm div\,}b_-$ than in Theorem 2A.

\begin{theorem3}[Two-sided bound]
Let $d \geq 3$. Assume that

{\rm (\textit{1})}
$b \in \mathbf{F}_\delta$ for some $\delta<\infty$.

{\rm (\textit{2})} $|{\rm div\,}b| \in \mathbf{K}^d_\nu$ for some small $\nu$ dependent on $d,\sigma,\xi,\delta$. 

Then the heat kernel $u(t,x;s,y)$ satisfies the two-sided Gaussian bound
\[
c_1 k_{c_2}(t-s;x-y)e^{-c_0(t-s)}\leq u(t,x;s,y)\leq c_3 k_{c_4}(t-s;x-y)e^{c_5(t-s)}.
\]
for a.e.\,$x, y \in \mathbb{R}^d$ and $0 \leq s < t < \infty$,
with constants  $c_i>0$ $(i=1,2,3,4)$ dependent on $d,\sigma,\xi,\delta,\nu$ and $c_0,c_5$ on $c(\delta),\lambda(\nu)$. If $c(\delta)=\lambda(\nu)= 0$, then $c_0=c_5=0$. 
\end{theorem3}

\begin{corollary} In the assumptions of Theorem 3A the following is true. 
 \label{cor1}

{\rm(\textit{i})} For every $f \in L^2$, $v(t,\cdot):=e^{-t\Lambda}f(\cdot)$ is H\"{o}lder continuous  (possibly after redefinition on a measure zero set in $\mathbb R^d \times \mathbb R^d$), i.e.\,for every $0<\alpha<1$ there exist constants $C<\infty$ and $\beta \in ]0,1[$ such that for all $z \in \mathbb R^d$, $s>R^2$, $0<R \leq 1$
$$
|v(t,x)-v(t',x')| \leq C
\|v\|_{L^\infty([s-R^2,s] \times \bar{B}(z,R))}
\biggl(\frac{|t-t'|^{\frac{1}{2}} + |x-x'|}{R} \biggr)^\beta
$$
for all $(t,x)$, $(t',x') \in [s-(1-\alpha^2)R^2,s] \times \bar{B}(z,(1-\alpha) R)$.

Furthermore, if $v \geq 0$, then it satisfies the Harnack inequality: Let $0<\alpha<\beta<1$, then there exists a constant $K=K(d,\sigma,\xi,\delta,\nu,\alpha,\beta)<\infty$ such that for all $(s,x) \in ]R^2,\infty[ \times \mathbb R^d$, $0<R \leq 1$ one has
$$
v(t,y) \leq K v(s,x)
$$
for all $(t,y) \in [s-\beta R^2,s-\alpha^2 R^2] \times \bar{B}(x,\delta R)$. 

\medskip

{\rm (\textit{ii})} The conservation of probability property:
$$
\langle u(t,x;s,\cdot)\rangle=1 \quad \text{ for all $x \in \mathbb R^d$, $t>s$.}
$$

\medskip

{\rm (\textit{iii})} 
$$e^{-(t-s)\Lambda_{C_u}}f(x):=\langle u(t,x;s,\cdot)f(\cdot)\rangle, \quad t>s, \quad f \in C_u$$
is a Feller semigroup on $C_u$, the space of bounded uniformly continuous functions on $\mathbb R^d$.
\end{corollary}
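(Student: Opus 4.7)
The strategy is to deduce each assertion from the two-sided Gaussian bounds of Theorem \ref{main_thm}, combined with properties of the smooth approximants $u_{\varepsilon_1,\varepsilon}(t,x;s,y)$ that survive in the limit. The main tool behind (i) is the classical observation, going back to Fabes-Stroock, that a two-sided Gaussian bound on the heat kernel is itself sufficient to imply the parabolic Harnack inequality and, by Moser's oscillation argument, H\"{o}lder continuity of solutions.

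For (i), I would carry out Moser's iteration at the approximating level, where $v_{\varepsilon_1,\varepsilon}(t,x):=e^{-t\Lambda_{\varepsilon_1,\varepsilon}}f(x)$ is a classical solution with smooth bounded coefficients. The essential point is uniformity of the iteration constants in $\varepsilon_1,\varepsilon$: the drift term is handled using the form-boundedness $b_\varepsilon \in \mathbf{F}_\delta$ (with $\delta,c(\delta)$ independent of $\varepsilon$, by the approximation set up earlier in the paper), and the zero-order contribution arising from integration by parts is controlled through ${\rm div\,}b_\varepsilon \in \mathbf{K}^d_\nu$ uniformly. Once the local Moser estimates are obtained at the approximant level with uniform constants, one passes to the limit using the strong $L^2$-convergence built into Theorem \ref{main_thm} to get the corresponding estimates for $v$. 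The Harnack inequality is then extracted from the two-sided Gaussian bounds on $u(t,x;s,y)$ by the standard Fabes-Stroock chaining argument, and the joint H\"{o}lder estimate follows by the usual oscillation argument.

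For (ii), at the smooth approximating level the constant function $\mathbf{1}$ solves the backward adjoint equation associated with $\Lambda_{\varepsilon_1,\varepsilon}$, hence classically $\int u_{\varepsilon_1,\varepsilon}(t,x;s,y)\,dy = 1$. The uniform Gaussian upper bound (UGB) dominates $u_{\varepsilon_1,\varepsilon}$ uniformly in $\varepsilon_1,\varepsilon$; combined with a.e.\,convergence of the kernels (from strong $L^2$ semigroup convergence together with the uniform upper bound), dominated convergence yields $\int u(t,x;s,y)\,dy = 1$.

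For (iii), the Feller property on $C_u$ follows from (i) and (ii) by a standard argument. For $f \in C_u \cap L^2$, continuity of $(t,x)\mapsto e^{-t\Lambda_{C_u}}f(x)$ comes from (i), while $\|e^{-t\Lambda_{C_u}}f\|_\infty \leq \|f\|_\infty$ and uniform continuity follow from the Gaussian upper bound together with $\langle u(t,x;s,\cdot)\rangle = 1$. For general $f \in C_u$, one approximates by truncations $f_n \in C_u \cap L^2$ and passes to the limit uniformly using conservation of probability. Strong continuity at $t=0$ likewise follows from $\langle u(t,x;s,\cdot)\rangle=1$ and the kernel estimates. The main obstacle throughout is ensuring that the Moser/Fabes-Stroock iteration in (i) yields constants genuinely independent of $\varepsilon,\varepsilon_1$; this relies crucially on the uniform $\mathbf{F}_\delta$ and $\mathbf{K}^d_\nu$ control of $b_\varepsilon$ and ${\rm div\,}b_\varepsilon$ set up in Section \ref{apost_sect0}.
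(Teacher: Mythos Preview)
Your proposal is close to the paper's approach and correct in outline for (\textit{ii}) and (\textit{iii}), but there is a point of confusion in (\textit{i}) worth naming. Your opening sentence is exactly right: the Fabes--Stroock argument takes a two-sided Gaussian bound on the heat kernel as \emph{input} and returns the Harnack inequality and H\"older continuity with constants depending only on the constants in the Gaussian bounds. That is precisely what the paper does: it applies \cite[Sect.\,3]{FS} (De Giorgi's ideas) to $v_{\varepsilon_1,\varepsilon}$, using the a priori bounds of Theorems~\ref{apr_thm2} and~\ref{apr_thm3}, so that uniformity in $\varepsilon_1,\varepsilon$ is automatic. It then passes to the limit via Arzel\`a--Ascoli (equicontinuity from the uniform H\"older estimate) combined with the strong $L^2$ convergence $e^{-t\Lambda_{\varepsilon_1,\varepsilon}}\to e^{-t\Lambda}$ to identify the limit.

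Your execution, however, describes a direct Moser/Caccioppoli iteration on the PDE with drift, ``handling the drift term using form-boundedness'' and ``the zero-order contribution via $\mathbf{K}^d_\nu$''. This is at best an unnecessary detour and at worst a gap: with $\delta$ only assumed finite (not small), there is no evident way to absorb terms like $\langle b_\varepsilon\cdot\nabla v,\,v^{p-1}\eta^2\rangle$ uniformly into the Dirichlet form in a local Caccioppoli estimate; integrating by parts trades this for $\langle {\rm div\,}b_\varepsilon,\,v^p\eta^2\rangle$ plus a term with $b_\varepsilon\cdot\nabla\eta$, and neither smallness of $\nu$ on ${\rm div\,}b_-$ nor smallness of $\delta$ is available to close the loop. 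The whole point of routing through the Gaussian bounds is that the Fabes--Stroock regularity machinery is drift-blind once those bounds are in hand. So drop the direct iteration on the drift equation, keep your first sentence, and add the Arzel\`a--Ascoli step for the passage to the limit.
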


We first establish (\textit{i}) for $v_{\varepsilon_1,\varepsilon}(t,x)=e^{-t\Lambda_{\varepsilon_1,\varepsilon}}f(x)$, then  apply the Arzel\`{a}-Ascoli Theorem and use the convergence $e^{-t\Lambda}=s{\mbox-}L^2\mbox{-}\lim_{\varepsilon \downarrow 0} \lim_{\varepsilon_1 \downarrow 0}e^{-t\Lambda_{\varepsilon_1,\varepsilon}}$.
In turn, the proof of (\textit{i}) for $v_{\varepsilon_1,\varepsilon}$ repeats the argument in \cite[Sect.\,3]{FS}, which appeals to the ideas of E.\,De Giorgi \cite{DG} and uses \eqref{i_LGB}, \eqref{i_UGB}. 
The proof of (\textit{ii}) and (\textit{iii}) is a standard consequence of  \eqref{i_UGB},  the approximation result and the H\"{o}lder continuity of bounded solutions in (\textit{i}).

\begin{theorem3prime}[Two-sided bound]
Let $d \geq 3$. Assume that

{\rm (\textit{1})}
$b \in \mathbf{MF}_\delta$ for some $\delta<\infty$.

{\rm (\textit{2})} $|{\rm div\,}b| \in \mathbf{K}^d_\nu$ for some small $\nu$ dependent on $d,\sigma,\xi,\delta$. 

Then the heat kernel $u_{\varepsilon_1,\varepsilon}(t,x;s,y) \equiv e^{-(t-s)\Lambda_{\varepsilon_1,\varepsilon}}(x,y)$ satisfies, for all $\varepsilon_1$, $\varepsilon>0$, the two-sided Gaussian bound
\[
c_1 k_{c_2}(t-s;x-y)e^{-c_0(t-s)}\leq u_{\varepsilon_1,\varepsilon}(t,x;s,y)\leq c_3 k_{c_4}(t-s;x-y)e^{c_5(t-s)}.
\]
for a.e.\,$x, y \in \mathbb{R}^d$ and $0 \leq s < t < \infty$,
with constants  $c_i>0$ $(i=1,2,3,4)$ dependent on $d,\sigma,\xi,\delta,\nu$ and $c_0,c_5$ on $c(\delta),\lambda(\nu)$ (but not on $\varepsilon_1$, $\varepsilon$). If $c(\delta)=\lambda(\nu)= 0$, then $c_0=c_5=0$. 
\end{theorem3prime}

In the assumptions of Theorem 3B an a priori analogue of Corollary \ref{cor1} holds (i.e.\,with constants independent of $\varepsilon_1$, $\varepsilon$).

\begin{remark}
The proofs of heat kernel bounds in Theorems \ref{second_thm}, 2A (at least at the a priori level) can be extended, with minimal changes, to time-dependent coefficients. That is, let
$$
a=a^*:[0,\infty[ \times \mathbb R^d \rightarrow \mathbb R^d \otimes \mathbb R^d, \quad  \sigma I \leq a(t,x) \leq \xi I \quad \text{ for a.e. } (t,x) \in [0,\infty[ \times \mathbb R^d;
$$
we replace $\mathbf{F}_\delta$ by the class of time-dependent form-bounded vector fields
 $b:[0,\infty[ \times \mathbb R^d \rightarrow \mathbb R^d$, i.e.\,$|b| \in L^2_{\loc}([0,\infty[ \times \mathbb R^d)$ and there exists a constant $\delta>0$ such that
\begin{equation*}
\int_0^\infty\|b(t)f(t)\|_2^2 dt \leq \delta \int_0^\infty\|\nabla f(t)\|_2^2dt + \int_0^\infty g(t)\|f(t)\|_2^2dt
\end{equation*}
for some $g=g_\delta$ satisfying $\int_s^t g(\tau)d\tau \leq c_\delta\sqrt{t-s}$ (to obtain global in time bounds), for all $f \in L^1_{\loc}([0,\infty[,  W^{1,2})$;
the Kato class condition  in Theorem 2A is replaced with its time-dependent counterpart, see \cite{Z3}. Moreover, Theorem 2B also admits extension to time-dependent coefficients:
\begin{align*}
\int_0^\infty|\langle b(t)f(t),f(t)\rangle|dt  \leq \delta \int_0^\infty\|\nabla f(t)\|_2 \|f(t)\|_2 dt + \int_0^\infty g(t)\|f(t)\|_2^2 dt
\end{align*}
for all $f \in L^1_{\loc}([0,\infty[, W^{1,2})$ 
for some constant $\delta$ and a function $g$ satisfying the same assumptions as above.
\end{remark}

\bigskip

\noindent\textbf{4.~Existing results. }
In \cite{LZ} the authors constructed a weak heat kernel for $-\Delta + b \cdot \nabla$ satisfying Gaussian upper or lower bound under the following assumptions:

a) For the Gaussian upper bound: $b \in \mathbf{F}_\delta$ for some $0<\delta<\infty$, ${\rm div\,}b_+ \in \mathbf{K}^d_\nu$ for $\nu$ sufficiently small, and $|{\rm div\,}b|$ is form-bounded,
\begin{equation}
\label{div_LZ}
\tag{$\ast$}
\langle |{\rm div\,}b|,|f|^2\rangle \leq \kappa \langle |\nabla f|^2 \rangle + c(\kappa) \langle |f|^2\rangle, \quad f \in W^{1,2},
\end{equation}
with form-bound $\kappa<2$. 

We emphasize that according to our Theorem 2A, even in the special case $a=I$, the Gaussian upper bound on the heat kernel $e^{-t\Lambda}(x,y)$  is valid without the extra condition \eqref{div_LZ}.

\smallskip

b) For the Gaussian lower bound:  $b \in \mathbf{F}_\delta$ for some $0<\delta<\infty$ and ${\rm div\,}b=0$.

By Theorem \ref{second_thm}, even in the case $a=I$, their condition ``${\rm div\,}b=0$'' is relaxed to ``${\rm div\,}b \geq 0$'' albeit at expense of requiring $\delta<4\sigma^2$. 

c) They also proved the Gaussian lower bound on the heat kernel of $-\Delta +b \cdot \nabla$  defined via the Cameron-Martin-Girsanov formula, assuming that
$$|b|^2 \in \mathbf{K}^d_\delta \text{ and } b \in \mathbf{K}^{d+1}_{\nu} \quad (\equiv \|(\lambda-\Delta)^{-\frac{1}{2}}|b|\|_\infty \leq \nu)$$ with some   $\delta<\infty$, $c(\delta)\geq 0$, and $\nu<\infty$, $\lambda=\lambda(\nu) \geq 0$, thus refining the result in \cite{Z1} where the two-sided Gaussian bound on the heat kernel of $-\Delta +b \cdot \nabla$ is proved assuming only $b \in \mathbf{K}^{d+1}_\nu$ but with sufficiently small $\nu$ (in this regard, see also \cite{KiS}). Concerning semigroups defined via Cameron-Martin-Girsanov formula, see \cite{FK}.

\bigskip

\noindent\textbf{5.~On the proof of Theorem \ref{second_thm}.} We first establish a priori Gaussian lower bound, i.e.\,for smooth $a$, $b$. The proof is based on the method of J.\,Nash \cite{N} and its development in \cite{S}. The required a posteriori Gaussian lower bound then follows using approximation results in \cite{KiS2}.

\medskip

\noindent\textbf{6.~On the proof of Theorem 2A.} 
First, we establish Gaussian upper bound on the heat kernel of the auxiliary operator
$$
H^+=-\nabla \cdot a_{\varepsilon_1} \cdot \nabla + b_\varepsilon \cdot \nabla + E_\varepsilon {\rm div\,}b_+
$$
using only $E_\varepsilon{\rm div\,}b_+ \in L^\infty$ for every $\varepsilon>0$ (rather than stronger condition ${\rm div\,}b_+ \in \mathbf{K}^d_\nu$),
see Theorem \ref{aux_ub_thm}. The proof  uses J.\,Moser's iterations. Then the Gaussian upper bound on the heat kernel of $\Lambda_{\varepsilon_1,\varepsilon}=-\nabla \cdot a_{\varepsilon_1} \cdot \nabla + b_\varepsilon \cdot \nabla$ follows 
using the Duhamel formula, by considering $\Lambda_{\varepsilon_1,\varepsilon}$ as $H^+$ perturbed by the potential $-E_\varepsilon{\rm div\,}b_+ \in \mathbf{K}^d_\nu$, see Theorem \ref{apr_thm2}.  
Finally, we obtain the required (a posteriori) upper bound on the heat kernel of $\Lambda=-\nabla \cdot a \cdot \nabla + b \cdot \nabla$ by passing to the limit in $\varepsilon_1 \downarrow 0$ and then in $\varepsilon \downarrow 0$ (Proposition \ref{prop_approx}).

\medskip

\noindent\textbf{7.~On the proof of Theorem 2B.} The proof is obtained by modifying the proof of Theorem 2A, which amounts to estimating differently one term in the proof of the a priori upper bound of Theorem 2A.

\medskip

\noindent\textbf{8.~On the proof of Theorem 3A.} 
The Gaussian upper bound follows from Theorem 2A, so we only need to prove the Gaussian lower bound. First, we establish the lower bound on the heat kernel of the auxiliary operator
$$
H^-=-\nabla \cdot a_{\varepsilon_1} \cdot \nabla + b_\varepsilon \cdot \nabla - E_\varepsilon{\rm div\,}b_-
$$
using only $E_\varepsilon{\rm div\,}b_- \in L^\infty$ for every $\varepsilon>0$ (rather than ${\rm div\,}b_- \in \mathbf{K}^d_\nu$), see Theorem \ref{lb_div}.
The proof of the auxiliary lower bound of Theorem \ref{lb_div} is obtained by modifying the proof of Theorem \ref{second_thm} (Nash's method) to take advantage of the (a priori) Gaussian upper bound established in Theorem \ref{apr_thm2}.
Now, the lower bound on the heat kernel of $\Lambda_{\varepsilon_1,\varepsilon}=-\nabla \cdot a_{\varepsilon_1} \cdot \nabla + b_\varepsilon \cdot \nabla$ follows by considering $\Lambda_{\varepsilon_1,\varepsilon}$ as $H^-$ perturbed by $E_\varepsilon{\rm div\,}b_- \in \mathbf{K}^d_\nu$, and appealing to a pointwise inequality between the heat kernels of $\Lambda_{\varepsilon_1,\varepsilon}$, $H^-$ and $H^-_{p'}=-\nabla \cdot a_{\varepsilon_1} \cdot \nabla + b_\varepsilon \cdot \nabla  - p'E_\varepsilon{\rm div\,}b_-$, $p \geq 2$. 
The required (a posteriori) lower bound on the heat kernel of $\Lambda=-\nabla \cdot a \cdot \nabla + b \cdot \nabla$ follows using Proposition \ref{prop_approx}.

\medskip

 \noindent\textbf{9.~On the proof of Theorem 3B.} The Gaussian upper bound follows from Theorem 2B. To prove the Gaussian lower bound, we work with rather sophisticated regularization of Nash's $G$-functions, as in \cite{S2}. Once the bounds on the $G$-functions are established, we argue as in the proof of Theorem 3A.

\medskip

\tableofcontents

\bigskip

\section{Preliminaries}

\label{prelim_sect}

The following class of vector fields $b$ arises naturally in the study of operator $-\nabla \cdot a \cdot \nabla + b \cdot \nabla$.

Let $a \in (H_{\sigma,\xi})$. Let $A$ be the self-adjoint operator associated with the Dirichlet form $t[u,v]=\langle a\cdot\nabla u, \nabla v\rangle$, $u$, $v\in W^{1,2}$.

Given a vector field $b:\mathbb R^d \rightarrow \mathbb R^d$, we set $b_a:=\frac{b}{|b|}\sqrt{b\cdot a^{-1}\cdot b}$.

\begin{definition*}
A vector field $b:\mathbb R^d \rightarrow \mathbb R^d$ is said to be $A$-form-bounded  if $|b| \in L^2_\loc$ and there exist constants $\delta_a > 0$ and $c(\delta_a) \geq 0$ such that 
\begin{equation*}
\|b_a f\|_2^2 \leq \delta_a \|A^\frac{1}{2}f\|_2^2 + c(\delta_a) \|f\|_2^2, \quad f \in D(A^\frac{1}{2})= W^{1,2},
\end{equation*}
or, shortly, 
$$
|b_a|^2 \leq \delta_a A + c(\delta_a) \quad (\text{in the sense of quadratic forms}). 
$$
(written as $b \in \mathbf{F}_{\delta_a}(A)$).
\end{definition*}

It is easily seen that
\begin{equation*}
b \in \mathbf{F}_{\delta} \equiv \mathbf{F}_\delta(-\Delta) \quad \Longrightarrow \quad b \in \mathbf{F}_{\delta_a}(A),\;\;\delta_a=\sigma^{-2}\delta.
\end{equation*}

\bigskip

\section{Proof of Theorem \ref{second_thm}}

Since $b \in \mathbf{F}_\delta \equiv \mathbf{F}_\delta(-\Delta)$, $\delta<4\sigma^2$, we have $$b \in \mathbf{F}_{\delta_a}(A), \quad \delta_a<4.$$

First, we assume that $a \in (H_{\sigma,\xi})$ and $b$ are smooth, $b$ is bounded. 

\begin{definition}
A constant is said to be generic if it only depends on the dimension $d$, the ellipticity constants $\sigma$, $\xi$, the form-bound $\delta_a$ and the constant $c(\delta_a)$.
\end{definition}

The integral bound, the bounds on Nash's moment, entropy and the first (i.e.\,$\hat{G}$-) function contained in Sections \ref{int_bd_sect}-\ref{fbd_3_subsect_33}, which we use to prove the lower bound, appeared in \cite{S2} although there they were used for different purposes. Since they also play a crucial role in what follows, we include their proofs.

\subsection{Integral bound on the heat kernel of $-\nabla \cdot a \cdot \nabla + b \cdot \nabla$ for $b\in \mathbf{F}_{\delta_a}(A)$, $\delta_a<4$.} 

\label{int_bd_sect}

Set
\[
\Lambda = A + b \cdot \nabla, \quad A=-\nabla \cdot a \cdot \nabla.
\]
Let $U^{t,s}$ denote the solution of
\[ 
	\left\{ \begin{array}{rcl}
		-\frac{d}{d t} U^{t,s} f = \Lambda U^{t,s} f & , & 0 \leq s < t <\infty \\
	0 \leq f \in L^1 \cap L^\infty & 
	\end{array} \right.
	\tag{$CP_\Lambda$}
\]
in $L^p =L^p(\mathbb{R}^d), \;p \in [1,\infty[.$ 

Set $u(t):= U^{t,s}f.$ 
We have, for $p \in [p_c, \infty[$, $p_c=\frac{2}{2-\sqrt{\delta_a}}$,
\[
\big \langle \big(\frac{d}{d t} + \Lambda \big)u(t),u(t)^{p-1} \big  \rangle = 0.
\]
Setting $v:=u^{p/2}$, $w:=\langle v^2\rangle \equiv \|u(t,\cdot) \|^p_p$, $J:= \|A^{1/2}v \|^2_2,$ we have by quadratic estimates,
\begin{align*}
-\frac{d}{d t}w & = 2\big(\frac{2}{p^\prime}\|A^{1/2}v\|_2^2 + \langle \nabla v, b v \rangle\big), \;\; p^\prime =p/(p-1),
\end{align*}
\begin{align*}
|\langle \nabla v, b v\rangle | & \leq \langle  b \cdot a^{-1}\cdot b v, v \rangle^{1/2} \|A^{1/2} v \|_2^{1/2} \\
& \leq (\delta_a J + c(\delta_a) w)^{1/2} J^{1/2}\\
& (\text{we are using $b \in \mathbf{F}_{\delta_a}(A)$}) \\
& \leq \sqrt{\delta_a} J + (2 \sqrt{\delta_a})^{-1} c(\delta_a) w.
\end{align*}
\[
-\frac{d}{d t} w \geq 4 c_p J - \frac{1}{\sqrt{\delta_a}} c(\delta_a) w. \\
\tag{$\star$}
\]
where $c_p := \frac{1}{p^\prime}-\sqrt{\frac{\delta_a}{4}} = \frac{1}{p_c} -\frac{1}{p} \geq 0.$

 From $(\star)$ we obtain $-\frac{d}{d t} w \geq - \frac{1}{\sqrt{\delta_a}} c(\delta_a) w$, $w(t) \leq w(s) e^{\frac{1}{\sqrt{\delta_a}} c(\delta_a)(t-s)},$ or
\[
\|u(t,\cdot) \|_p \leq \|u(s,\cdot) \|_p e^{\frac{1}{p} \frac{c(\delta_a)}{ \sqrt{\delta_a}} (t-s)}.\\
\tag{$\star_a$}
\]
 In particular,
\[
\|u(t,\cdot)\|_\infty \leq \|u(s,\cdot)\|_\infty.\\
\tag{$\star_b$}
\]
Using the Nash inequality
\[
\|\nabla \psi \|_2^2 \geq c_N \| \psi \|_2^{2 +4/d} \|\psi\|_1^{-4/d},
\]
we obtain from $(\star)$ with $p=2 p_c,$ and so $c_p=\frac{1}{2p_c}$,
\[
-\frac{d}{d t} w \geq c_g w^{1+2/d} \|v\|_1^{-4/d} - \delta_a^{-1/2} c(\delta_a) w, \qquad c_g = 2\sigma c_N p_c^{-1}.
\]
Therefore
\[
\frac{d}{2} \frac{d}{d t} \big( w^{-2/d} \big) \geq c_g \|u\|_{p_c}^{-4p_c/d} - \delta_a^{-1/2} c(\delta_a) w^{-2/d}.
\]
This inequality is linear with respect to $\phi =w^{-2/d}.$ Thus setting $\mu(t)=\frac{2c(\delta_a)}{d \sqrt{\delta_a}} (t-s),$ we have, using $(\star_a),$
\begin{align*}
\frac{d}{d r} \big( e^{\mu(r)} \phi(r) \big) & \geq \frac{2c_g}{d} e^{\mu(r)} \|u(r,\cdot)\|_{p_c}^{-4p_c/d} \\
& \geq \frac{2c_g}{d} e^{-\mu(r)} \|u(s,\cdot)\|_{p_c}^{-4p_c/d},\\
e^{\mu(t)} \phi(t) & \geq \frac{2c_g}{d} \|u(s,\cdot)\|_{p_c}^{-4p_c /d} \int_s^t e^{-\mu(r)} d r\\
& \geq \frac{2c_g}{d} \|u(s,\cdot)\|_{p_c}^{-4p_c /d} e^{-\mu(t)} (t-s), \text{ and so }
\end{align*}
\[
\|u(t,\cdot)\|_{2p_c} \leq (d/(2c_g))^{d/4 p_c} e^\frac{c(\delta_a) (t-s)}{p_c \sqrt{\delta_a}} (t-s)^{-\frac{d}{2}\big( \frac{1}{p_c}-\frac{1}{2p_c} \big)} \|u(s,\cdot)\|_{p_c}.
\tag{$\star_c$}
\]

Applying  the Coulhon-Raynaud Extrapolation Lemma (Appendix \ref{app_B}) to 
\[
\big\|e^\frac{-c(\delta_a)t}{p_c \sqrt{\delta_a}}u(t,\cdot)\big\|_{2p_c} \leq (d/(2c_g))^{d/4 p_c} (t-s)^{-\frac{d}{2}\big( \frac{1}{p_c}-\frac{1}{2p_c} \big)} \big\|e^\frac{-c(\delta_a)s}{p_c \sqrt{\delta_a}} u(s,\cdot)\big\|_{p_c}
\]
and
\[
\big\|e^\frac{-c(\delta_a)t}{p_c \sqrt{\delta_a}}u(t,\cdot)\big\|_\infty \leq \big\|e^\frac{-c(\delta_a)s}{p_c \sqrt{\delta_a}}u(s,\cdot)\big\|_\infty,
\]
which is an immediate consequence of the inequalities $(\star_c)$ and $(\star_b)$, we obtain
\[
\|u(t,\cdot)\|_\infty \leq c e^\frac{c(\delta_a)(t-s)}{p_c \sqrt{\delta_a}} (t-s)^{-\frac{d}{2 p}} \|u(s,\cdot)\|_p \quad \forall p \in [p_c, \infty[\\
\tag{$\star\star$}
\]
with a generic constant $c$ (although it does not depend on $\xi$).

From $(\star\star)$ we immediately obtain the following
integral bound on the heat kernel $u(t,x;s,y)$ of $\Lambda$ ($\equiv$ the integral kernel of $U^{t,s}$):
\begin{equation}
\sup_{x \in \mathbb{R}^d} \langle u^{p^\prime}(t,x;s,\cdot)\rangle \leq c^{p'} e^\frac{p' c(\delta_a)(t-s)}{p_c \sqrt{\delta_a}} (t-s)^{-\frac{d}{2 (p-1)}}\quad \forall p \in [p_c, \infty[, \quad 0 \leq s < t <\infty.
\tag{$\circ$}
\label{int_bd_one_star}
\end{equation}

\subsection{Bounds on Nash's moment and entropy} 
\label{fbd_2_subsect}

Our assumptions on $b$ are as in Section \ref{int_bd_sect}.

In this section we assume $0 < t-s \leq 1$. (Let us note that if $c(\delta_a)=0$, then we can work over $0 \leq s<t <\infty$.)

Following J.\,Nash, define the entropy
\[
Q(s)\equiv Q(s;t,x) := - \langle u(t,x;s,\cdot) \log u(t,x;s,\cdot) \rangle
\]
and the moment
\[
M(s)\equiv M(s;t,x) := \langle |x-\cdot| u(t,x;s,\cdot) \rangle.
\]
The dynamic equation $\frac{d}{d s} u(t,x;s,\cdot) = \Lambda^* u(t,x;s,\cdot)$ (where $\Lambda^*=A-\nabla \cdot b$) and the conservation law $\langle u(t,x;s,\cdot) \rangle = 1$ yield
\[
- \frac{d}{d s}Q(s) = \mathcal{N}(s) + \langle b(\cdot) \cdot \nabla_\cdot u(t,x;s,\cdot) \rangle \equiv \bigg \langle \nabla u \cdot \frac{a}{u} \cdot \nabla u \bigg \rangle + \langle b \cdot \nabla u \rangle.
\]

\begin{proposition} \label{mb_thm} 
There exist generic constants $\mathbb{C}_1, c_\pm >0$ such that, for all $x \in \mathbb{R}^d$ and $0 < t-s \leq 1,$
\[
|Q(s) - \tilde{Q}(t-s)| \leq \mathbb{C}_1,\\
\tag{$\mbox{NEE}$}
\]
\[
c_- \sqrt{t-s} \leq M(s) \leq c_+ \sqrt{t-s},\\
\tag{$\mbox{NMB}$}
\]
where $\tilde{Q}(\tau) := \frac{d}{2} \log \tau.$
\end{proposition}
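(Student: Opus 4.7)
My plan is to follow Nash's moment/entropy scheme as developed in \cite{N,S}, absorbing the drift term $\langle b\cdot\nabla u\rangle$ into the Dirichlet-type functional $\mathcal{N}(s)$ by means of the form-bound $b\in\mathbf{F}_{\delta_a}(A)$ with $\delta_a<4$. The identity for $-\frac{d}{ds}Q$ is recorded in the text; an analogous integration by parts applied to $\partial_s u=\Lambda^*_y u=Au-\nabla_y\cdot(bu)$, together with $\langle u(t,x;s,\cdot)\rangle=1$, gives
\[
-\tfrac{d}{ds}M(s) = \bigl\langle \tfrac{x-\cdot}{|x-\cdot|}\cdot a\cdot \nabla u\bigr\rangle + \bigl\langle\tfrac{x-\cdot}{|x-\cdot|}\cdot b\,u\bigr\rangle.
\]

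The crux is a uniform absorption estimate for the drift terms. Factoring $b\cdot\nabla u=(a^{-1/2}b\sqrt u)\cdot(a^{1/2}\nabla u/\sqrt u)$, Cauchy-Schwarz yields $|\langle b\cdot\nabla u\rangle|\leq \langle|b_a|^2 u\rangle^{1/2}\mathcal{N}(s)^{1/2}$, and applying the form-bound $|b_a|^2\leq \delta_a A+c(\delta_a)$ to $\sqrt u$ --- together with $\|A^{1/2}\sqrt u\|_2^2=\tfrac14\mathcal{N}(s)$ and $\|\sqrt u\|_2^2=1$ --- gives $\langle |b_a|^2 u\rangle\leq \tfrac{\delta_a}{4}\mathcal{N}(s)+c(\delta_a)$. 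A Young-type splitting then produces, precisely because $\delta_a<4$,
\[
-\tfrac{d}{ds}Q(s)\geq c_Q\, \mathcal{N}(s)-c_Q', \qquad \bigl|\tfrac{d}{ds}M(s)\bigr|\leq c_M\, \mathcal{N}(s)^{1/2}+c_M',
\]
with generic $c_Q,c_M>0$ and $c_Q',c_M'$ proportional to $c(\delta_a)$ (so they vanish when $c(\delta_a)=0$). The moment estimate uses additionally $|b|^2\leq\xi|b_a|^2$ and $\int|a^{1/2}\nabla u|\,dy\leq\mathcal{N}(s)^{1/2}$ (the latter again by Cauchy-Schwarz against $\sqrt u$).

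The NEE and NMB bounds are then closed by the classical Nash bootstrap. The a priori on-diagonal estimate $u(t,x;s,y)\leq C(t-s)^{-d/2}e^{c(\delta_a)(t-s)}$, extracted from $(\star\star)$ of Section \ref{int_bd_sect} and its adjoint via the semigroup splitting $U^{t,s}=U^{t,(t+s)/2}U^{(t+s)/2,s}$, gives $Q(s)\geq \tilde Q(t-s)-\mathbb{C}_1$ directly. Initialized at a small $\tau_0>0$ (where the same pointwise bound also controls $M(t-\tau_0)\lesssim\sqrt{\tau_0}$), integrating the moment inequality together with Cauchy-Schwarz and the entropy dissipation,
\[
\int_{\tau_0}^{\tau}\mathcal{N}^{1/2}\,dr\leq \sqrt{(\tau-\tau_0)\int_{\tau_0}^{\tau}\mathcal{N}\,dr}\leq c\sqrt{\tau\,\bigl(Q(s)-Q_0+c_Q'\tau\bigr)},
\]
produces $M(s)\leq c_+\sqrt{t-s}$. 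Nash's entropy-moment inequality $Q(s)\leq d\log M(s)+C_N$ --- standard and sharp on Gaussians --- then simultaneously yields the upper half of NEE, $Q(s)\leq\tilde Q(t-s)+\mathbb{C}_1$, and (combined with the already established lower bound on $Q$) the lower half of NMB, $M(s)\geq c_-\sqrt{t-s}$.

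The main obstacle will be the singularity at $s\uparrow t$, where the density concentrates at $\delta_x$ and both $Q$ and $M$ blow down: the differential inequalities cannot be integrated from $\tau=0$, which is precisely why the ultracontractive estimate from Section \ref{int_bd_sect} is indispensable as the initial datum of the bootstrap, and why the restriction $0\leq s<t\leq1$ appears as soon as $c(\delta_a)>0$ (in order to keep the exponential prefactors on $c_Q,c_Q',c_M,c_M'$ bounded by a generic constant).
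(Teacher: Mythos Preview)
Your overall plan---derive differential inequalities for $Q$ and $M$ by absorbing the drift via $b\in\mathbf{F}_{\delta_a}(A)$, then close the loop with Nash's inequality $e^{Q/d}\le c(d)M$---is exactly the paper's strategy, and your Claims $-Q'(s)\ge c_Q\mathcal N-c_Q'$ and $|M'(s)|\le c_M\mathcal N^{1/2}+c_M'$ match the paper's Claims~2 and~3. The lower entropy bound $Q(s)\ge\tilde Q(t-s)-C$ from the integral/on-diagonal estimate is also correct (Claim~1 in the paper).

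The gap is in your moment upper bound. The assertion that ``the same pointwise bound also controls $M(t-\tau_0)\lesssim\sqrt{\tau_0}$'' is unfounded: the on-diagonal estimate $u\le C(t-s)^{-d/2}$ carries no spatial decay and therefore cannot bound any moment. More seriously, your unweighted Cauchy--Schwarz yields $M(s)\le M(t-\tau_0)+c\sqrt{(t-s)(Q(s)-Q(t-\tau_0)+c')}$; since the only generic information on $Q(t-\tau_0)$ is the lower bound $Q(t-\tau_0)\ge\tfrac d2\log\tau_0-C$, the bracket contains $|\log\tau_0|$ and blows up as $\tau_0\downarrow 0$, while for $\tau_0$ comparable to $t-s$ you need precisely the NMB bound on $M(t-\tau_0)$ that you are trying to prove. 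The single-step argument does not close. (It can be repaired by iterating geometrically in $\tau_0$ and summing, but that is not what you wrote.) The paper avoids this difficulty with a \emph{weighted} Cauchy--Schwarz (Claim~3): writing $\int_s^t\mathcal N^{1/2}\,d\tau=\int_s^t\bigl(\sqrt{t-\tau}\,\mathcal N\bigr)^{1/2}(t-\tau)^{-1/4}\,d\tau$ and applying Cauchy--Schwarz gives a factor $(t-s)^{1/4}$ times $\bigl(\int_s^t\sqrt{t-\tau}\,\mathcal N\,d\tau\bigr)^{1/2}$; then $\int_s^t\sqrt{t-\tau}\,\mathcal N\le -p_c\int_s^t\sqrt{t-\tau}\,Q'\,d\tau+C\sqrt{t-s}$, and an integration by parts (Claim~4) converts this to $\sqrt{t-s}\,Q(s)-\tfrac12\int_s^t Q(\tau)(t-\tau)^{-1/2}\,d\tau$. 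The weight $\sqrt{t-\tau}$ kills the boundary term at $\tau=t$ (where $Q\to-\infty$), and the remaining integral is controlled by the \emph{lower} bound on $Q$ alone, producing directly $M(s)\le C\sqrt{(t-s)(Q(s)-\tilde Q(t-s)+C')}$. This closes against $Q\le d\log M+C_N$ in one step (Claims~5--7) without any initialization at positive $\tau_0$.
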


\begin{proof}[Proof of Proposition \ref{mb_thm}] We will repeatedly use $\langle u(t,x;s,\cdot) \rangle = 1$ and \eqref{int_bd_one_star}.

\begin{claim}
\label{_cl_1}
~ $Q(s) \geq \tilde{Q}(t-s) -C_{p_c}$ where $C_{p_c} =(p_c-1)\log \hat{c}, \; \hat{c}$ the constant from \eqref{int_bd_one_star}.
\end{claim}

\medskip

\begin{proof}[Proof of Claim \ref{_cl_1}]~ By Jensen's inequality and \eqref{int_bd_one_star} for $r=p_c,$
\begin{align*}
Q(s) & = -(r-1)\big \langle u \log u^\frac{1}{r-1} \big \rangle \\
& \geq - (r-1)\log \big \langle u^{r'}(t,x;s,\cdot) \big \rangle \\
& \geq -(r-1) \log \big (\hat{c} (t-s)^{-\frac{d}{2(r-1)}} \big).
\end{align*}

\end{proof}

\begin{claim}
\label{_cl_2}
~ $\mathcal{N}(s) \leq p_c \big(- Q^\prime (s) + c(\delta_a) /\sqrt{\delta_a} \big),$ where $Q^\prime (s)= \frac{d}{d s} Q(s).$
\end{claim}
\begin{proof}[Proof of Claim \ref{_cl_2}] Clearly, $\mathcal{N}(s)= - Q^\prime (s) - \langle b \cdot \nabla u \rangle,$ and
\begin{align*}
|\langle b \cdot \nabla u \rangle| & \leq \langle b \cdot a^{-1} \cdot b u \rangle^{1/2} \mathcal{N}^{1/2} \\
& \leq \big( \frac{\delta_a}{4} \mathcal{N} + c(\delta_a) \big)^{1/2} \mathcal{N}^{1/2} \\
& \leq \sqrt{\delta_a /4} \mathcal{N} + c(\delta_a) /\sqrt{\delta_a}.
\end{align*}
\end{proof}

\begin{claim}
\label{_cl_3}
\begin{align*}
M(s) \leq \sqrt{\xi} \bigg \{ \sqrt{2 p_c}\big(1+\sqrt{\delta_a/4}\big) \sqrt[4]{t-s} \bigg[ \frac{c(\delta_a)}{\sqrt{\delta_a}} \sqrt{t-s} & - \int_s^t \sqrt{t-\tau} Q^\prime(\tau) d \tau   \bigg]^{1/2}\\
& + \sqrt{t-s}(c(\delta_a))^{1/2} \bigg \}.
\end{align*}
\end{claim}
\begin{proof}[Proof of Claim \ref{_cl_3}] Clearly,
\begin{align*}
- M^\prime(s) & = -\langle |x-\cdot| \Lambda^*(\cdot) u(t,x;s,\cdot) \rangle \\
& =- \langle \nabla |x-\cdot| \cdot a \cdot \nabla u \rangle - \langle \nabla |x-\cdot|, b u \rangle \\
& \leq \bigg \langle \frac{x-\cdot}{|x-\cdot|} \cdot a u \cdot \frac{x-\cdot}{|x-\cdot|} \bigg \rangle^{1/2} \big( \mathcal{N}^{1/2} + \langle b \cdot a^{-1} \cdot b u \rangle^{1/2} \big).
\end{align*}
By $a \leq \xi I$, $\langle u \rangle = 1$ and $\langle b \cdot a^{-1} \cdot b u \rangle \leq \frac{\delta_a}{4} \mathcal{N} + c(\delta_a),$
\begin{align*}
- M^\prime(s) & \leq \sqrt{\xi} \bigg[\mathcal{N}^{1/2} + \big(\frac{\delta_a}{4} \mathcal{N} + c(\delta_a) \big)^{1/2} \bigg]\\
& \leq \sqrt{\xi} \bigg[(1+\sqrt{\delta_a/4}) \mathcal{N}^{1/2}(s) + \sqrt{c(\delta_a)} \bigg].
\end{align*}
Since $M(t)=0$ and $0<t-s \leq 1$,
\begin{align*}
M(s) & \leq \sqrt{\xi} \bigg[(1+\sqrt{\delta_a/4}) \int_s^t (\sqrt{t-\tau}\mathcal{N}(\tau))^{1/2} \frac{d \tau}{\sqrt[4]{t-\tau}} + (t-s) \sqrt{c(\delta_a)}  \bigg]\\
& \leq \sqrt{\xi} \bigg[(1+\sqrt{\delta_a/4}) \bigg(\int_s^t \frac{d \tau}{\sqrt{t-\tau}}\bigg)^{1/2} \bigg(\int_s^t \sqrt{t-\tau}\mathcal{N}(\tau) d \tau \bigg)^{1/2} + \sqrt{t-s}(c(\delta_a))^{1/2} \bigg]\\
& = \sqrt{\xi} \bigg[(1+\sqrt{\delta_a/4}) \sqrt{2}\sqrt[4]{t-s} \bigg(\int_s^t \sqrt{t-\tau}\mathcal{N}(\tau) d \tau \bigg)^{1/2} + \sqrt{t-s}(c(\delta_a))^{1/2} \bigg]. 
\end{align*}
By Claim \ref{_cl_2},
\begin{align*}
\int_s^t \sqrt{t-\tau}\mathcal{N}(\tau) d \tau & \leq p_c \int_s^t\bigg( \frac{\sqrt{t-\tau}}{\sqrt{\delta_a}} c(\delta_a) - \sqrt{t-\tau} Q^\prime(\tau) \bigg) d \tau \\
& \leq p_c \bigg( \frac{\sqrt{t-s}}{\sqrt{\delta_a}}c(\delta_a) - \int_s^t \sqrt{t-\tau}Q^\prime(\tau) d \tau \bigg).
\end{align*}
\end{proof}

 \begin{claim}
\label{_cl_4}
 \[
 - \int_s^t \sqrt{t-\tau}Q^\prime(\tau) d \tau \leq \sqrt{t-s}\big( Q(s) - \tilde{Q}(t-s) + C_{p_c} + d \big).
 \]
 \end{claim}

 Claim \ref{_cl_4} follows easily from Claim \ref{_cl_1} using integration by parts. 

\begin{claim}
\label{_cl_5}
\[
M(s) \leq \sqrt{{\xi}} \bigg( K_1 \sqrt{Q(s)-\tilde{Q}(t-s)+C_{p_c}+d} + K_2 (c(\delta_a))^{1/2} \bigg)\sqrt{t-s},
\]
where $K_1= \sqrt{2 p_c}(1+\sqrt{\delta_a/4})$ and $K_2 =1+\frac{K_1}{\sqrt{\delta_a}}.$  
\end{claim}

Claim \ref{_cl_5} is a simple corollary of Claim \ref{_cl_3} and Claim \ref{_cl_4}. 

\begin{claim}
\label{_cl_6}
There is a constant $c(d)<\infty$ such that
\[
e^{Q(s)/d} \leq c(d) M(s).
\]
\end{claim}

Claim \ref{_cl_6} follows from $\langle u \rangle = 1$ via the inequality $u \log u \geq - \mu u -e^{-1-\mu}$ for all real $\mu.$

Claim \ref{_cl_5} and Claim \ref{_cl_6} combined yield
\begin{claim}
\label{_cl_7}
\[
e^{\frac{2}{d}[Q(s)-\tilde{Q}(t-s)]} \leq 2 c(d)^2 \xi \bigg( K^2_1 \big[ Q(s)-\tilde{Q}(t-s)+C_{p_c}+d \big] + K^2_2 c(\delta_a) \bigg). 
\]
\end{claim}
Claim \ref{_cl_7} implies that, for all $0 < t-s \leq 1$ and all $x \in \mathbb{R}^d$ there is a generic constant $\mathbb{C}$ such that $Q(s)-\tilde{Q}(t-s) \leq \mathbb{C}.$ Taking into account Claim \ref{_cl_1}, Claim \ref{_cl_5} and Claim \ref{_cl_6} we arrive at $\mbox{(NEE)}$ and $\mbox{(NMB)}.$  
\end{proof}

\subsection{$\hat{G}$-bound} 

\label{fbd_3_subsect_33}

In what follows, $0 <t-s \leq 1$.
Define Nash's $\hat{G}$-function
\[
\hat{G}(s) 
:= \langle k_\beta(t-s,o-\cdot) \log u(t,x;s,\cdot) \rangle, \quad o = \frac{x+y}{2}
\]
for all $x,y \in \mathbb{R}^d$ such that $2 |x-y| \leq \sqrt{\beta(t-s)}$, where $\beta > \xi$ is a constant whose value we will be specified below.

The proof of the next proposition works under more general assumptions than in Theorem \ref{second_thm}, i.e.\,we may assume that $b$ satisfies the assumptions of Section \ref{int_bd_sect}.

\begin{proposition}
\label{prop_G2}
There exist generic constants $\beta$ and $\mathbb{C}$ such that 
$$
\hat{G}(t_s) \geq - \tilde{Q}(t-t_s) - \mathbb{C}, \qquad t_s=\frac{t+s}{2}.
$$
\end{proposition}

\begin{proof}[Proof of Proposition \ref{prop_G2}] 
Our proof of the $\hat{G}$-bound follows in general Nash's original proof and relies on the conservation law, the $M$-bound proved in Proposition \ref{mb_thm}, the Spectral gap inequality, the geometry of the euclidean space (i.e.\,the rate of growth of the volume of euclidean ball) and the integral bound \eqref{int_bd_one_star}. 

Let $\varepsilon>0$. Set $U(s):=u(t,x;s,\cdot)+\varepsilon$, $\varepsilon>0$ and put
\[
G(\tau)\equiv G_\varepsilon(\tau):= \langle k_\beta(t-s,o-\cdot) \log U(\tau) \rangle, \quad \tau \in [s,t_s].
\]
It suffices to carry out the proof for $G_\varepsilon$
since $\hat{G}(s) = \inf_{\varepsilon > 0}G_\varepsilon(s),$
Set
$$
\beta^* =  \frac{3}{2}\left(1+\frac{\delta_a}{4}\right)\xi.
$$

\medskip

\begin{claim}
\label{_claim1_}
For all $\tau \in [s, t_s]$ and $\beta  \geq \beta^*$
\[
- \bigg(G(\tau) + \tilde{Q}(t-\tau)\bigg)^\prime + \frac{3}{2}c(\delta_a)  \geq \frac{\sigma}{4\beta(t-s)}\big \langle k_\beta(t-s,o-\cdot) |\log U(\tau)-G(\tau)|^2 \big \rangle.
\]
\end{claim}

\begin{proof}[Proof of Claim \ref{_claim1_}]
Let $\mathcal{N}:=\langle \nabla \log U \cdot a \Gamma \cdot \nabla \log U \rangle, \; \mathcal{N}_0:=\langle \nabla \log \Gamma \cdot a \Gamma \cdot \nabla \log \Gamma \rangle,$ where $$\Gamma:= k_\beta(t-s,o-\cdot),$$ the Gaussian density, and let $b_a^2:=b\cdot a^{-1} \cdot b.$

The dynamic equation yields
\begin{align*}
-G^\prime(\tau)=-\langle \Gamma/U, u^\prime \rangle &= -\langle \Gamma/U, A U \rangle+ \langle \Gamma/U, \nabla \cdot b u\rangle \\
& = \mathcal{N} - \langle \nabla \Gamma \cdot a \cdot \nabla \log U \rangle - \langle b \cdot \nabla \Gamma, u/U \rangle + \langle \Gamma b \cdot \nabla \log U, u/U \rangle.
\end{align*}
By quadratic inequalities,
\[
|\langle \nabla \Gamma \cdot a \cdot \nabla \log U \rangle| \leq \mathcal{N}^{1/2} \mathcal{N}_0^{1/2}, \]

\[|\langle b \cdot \nabla \Gamma,u/U \rangle| \leq \mathcal{N}_0^{1/2} \langle b_a^2 \Gamma \rangle^{1/2},\]

\[|\langle \Gamma b \cdot \nabla \log U,u/U \rangle| \leq \mathcal{N}^{1/2} \langle b_a^2 \Gamma \rangle^{1/2}.
\]
Therefore,
\[
-G^\prime(\tau) \geq \frac{1}{2} \mathcal{N} - \frac{3}{2} \mathcal{N}_0 - \frac{3}{2} \langle b_a^2 \Gamma \rangle.
\]
Note that $\mathcal{N}_0 \leq \xi \langle (\nabla \Gamma)^2 /\Gamma \rangle =\frac{\xi d}{2 \beta(t-s)},$ and, since $b \in\mathbf{F}_{\delta_a}(A)$, 
\[
\langle b_a^2 \Gamma \rangle \leq \delta_a \langle \nabla \sqrt{\Gamma} \cdot a \cdot \nabla \sqrt{\Gamma} \rangle + c(\delta_a)\leq \frac{ \delta_a }{4} \mathcal N_0 + c(\delta_a).
\]
Thus, 
\[
-G^\prime(\tau) \geq \frac{1}{2} \mathcal{N}(\tau) - \frac{3}{4}\biggl(1+\frac{\delta_a}{4}\biggr) \frac{\xi d}{\beta(t-s)} - \frac{3}{2} c(\delta_a).
\]
Noticing that $-\frac{1}{t-s} \geq - \frac{1}{t-\tau}$ we have (for $\beta \geq \beta^* \equiv \frac{3}{2}(1+\frac{\delta_a}{4})\xi$)
\[
- \bigg( G(\tau) +\tilde{Q}(t-\tau) \bigg)^\prime + \frac{3}{2} c(\delta_a)  \geq \frac{1}{2} \mathcal{N}.
\]
At this point we use the Spectral gap inequality
\[
\big\langle \Gamma |\nabla \psi|^2 \big\rangle \geq \frac{1}{2 \beta (t-s)} \big \langle \Gamma |\psi - \langle \Gamma \psi \rangle |^2 \big \rangle
\]
obtaining
\[
- \bigg( G(\tau) +\tilde{Q}(t-\tau)\bigg)^\prime - \frac{3}{2} c(\delta_a) \geq \frac{\sigma}{4 \beta (t-s)} \big \langle \Gamma |\log U - G(\tau) |^2 \big \rangle.
\]
\end{proof}

\begin{claim}
\label{_claim10_}
Set $\Phi:= |\log U(t,x;\tau, \cdot) -G(\tau)|,$ $\tau \in [s, t_s].$ Let $\chi$ denote the indicator of the ball $B(o,\sqrt{\beta (t-s)}).$ There is a generic constant $c(\beta) > 0$ such that, for any $r \geq p_c,$
\[
- \bigg( G(\tau) +\tilde{Q}(t-\tau)\bigg)^\prime + \frac{3}{2}c(\delta_a) \geq c(\beta) (t-s)^{-1 + \frac{d(2-r)}{2(r-1)}} \big\langle \chi u^{r^\prime/2} \Phi \big \rangle^2.
\]
\end{claim}

\begin{proof}[Proof of Claim \ref{_claim10_}] Clearly, $\chi \Gamma \geq c_\beta (t-s)^{-d/2}\chi,$  $c_\beta =(4\pi \beta)^{-d/2}e^{-\frac{1}{4}}.$ Thus
\[
\big \langle \Gamma \Phi^2 \big \rangle \geq c_\beta (t-s)^{-d/2} \big \langle \chi \Phi^2 \big \rangle. 
\]
By H\"older's inequality, $\big \langle \chi \Phi^2 \big \rangle \geq \big \langle \chi u^{r^\prime/2} \Phi \big \rangle^2 / \big \langle u^{r^\prime} \big \rangle.$ By the integral bound \eqref{int_bd_one_star} in Section \ref{int_bd_sect},
$$\big \langle u^{r^\prime}(t,x;\tau,\cdot) \big \rangle \leq \hat{c}(t-\tau)^{-\frac{d}{2(r-1)}},$$ and by the inequality $t-\tau \geq \frac{t-s}{2},$
 \[
\big \langle \Gamma \Phi^2 \big \rangle \geq 2^{-\frac{d}{2(r-1)}} \frac{c_\beta}{\hat{c}}(t-s)^{\frac{d}{2}\frac{2-r}{r-1}} \big \langle \chi u^{r^\prime/2} \Phi \big \rangle^2.
 \]
Now Claim \ref{_claim10_} with $c(\beta)=\frac{\sigma c_\beta}{4\beta \hat{c}} 2^{-\frac{d}{2(r-1)}}$ follows from Claim \ref{_claim1_}.
\end{proof}

\begin{claim}
\label{_claim11_}
Set $\theta =r^\prime /2.$ Then
\[
\big \langle \chi u^\theta \Phi \big \rangle \geq \big \langle \chi u^\theta \big \rangle \bigg[-G +\theta^{-1} \log \frac{\big \langle \chi u^\theta \big \rangle}{\langle \chi \rangle} \bigg].
\]
\end{claim}

\begin{proof}[Proof of Claim \ref{_claim11_}]
By the definition of $\Phi$,
\[
\big \langle \chi u^\theta \Phi \big \rangle \geq \big \langle \chi u^\theta \log U \big \rangle - \big \langle \chi u^\theta \big \rangle G.\\
\tag{$\star$}
\]
Using the inequality $v \log v \geq -m v -e^{-1-m}, \;v\geq 0, \;m$ real, we have
\[
\big \langle \chi u^\theta \log U \big \rangle \geq \big \langle \chi u^\theta \log u \big \rangle \geq -m\theta^{-1}\big \langle \chi u^\theta \big \rangle -\theta^{-1}e^{-1-m} \langle \chi \rangle.
\]
Putting here $-1-m=\log \frac{\big \langle \chi u^\theta \big \rangle}{\langle \chi \rangle},$ it is seen that
\[
\big \langle \chi u^\theta \log U \big \rangle \geq \theta^{-1} \langle \chi u^\theta \big \rangle \log \frac{\big \langle \chi u^\theta \big \rangle}{\langle \chi \rangle}.
\]
Substituting the latter in $(\star)$ ends the proof.
\end{proof}

Now, if $0<\delta_a \leq 1,$ then $p_c=(1-\sqrt{\delta_a/4})^{-1} \leq 2$ and we can take $r=2$, in which case we can proceed directly to Claim \ref{_claim13_}. 
In the more interesting case $1 < \delta_a < 4,$ however, $r \geq p_c > 2$, and the next claim plays a crucial role.

\begin{claim}
\label{_claim12_}
Let $\hat{c}$ be the constant from the integral bound \eqref{int_bd_one_star} in Section \ref{int_bd_sect}:
$$\big \langle u^{r^\prime}(t,x;\tau,\cdot) \big \rangle \leq \hat{c}(t-\tau)^{-\frac{d}{2(r-1)}}, \quad r > 2.$$ 
Then, for all $\tau \in [s, t_s],$ 
 \[
\big \langle \chi u^\theta \big \rangle \geq c^\prime (t-s)^\frac{d(r-2)}{4(r-1)} \langle \chi u \rangle^\frac{r}{2}
 \]
 and
 \[
\big \langle \chi u^\theta \Phi \big \rangle \geq \big \langle \chi u^\theta \big \rangle \big[- G(\tau) - \tilde{Q}(t-\tau) + (r-1)\log \langle \chi u \rangle -c^{\prime \prime} \big],
 \]
 where $c^\prime = \big(2^\frac{d}{2(r-1)} \hat{c} \big)^\frac{2-r}{2}$ and $c^{\prime \prime} = \frac{(r-2)(r-1)}{r} \log \hat{c} + \frac{d(r-1)}{r} \log \big(2 \beta \omega_d^{2/d}\big).$
\end{claim}
 
\begin{proof} [Proof of Claim \ref{_claim12_}]
The first inequality follows from H\"older's inequality $$\big \langle \chi u^\theta \big \rangle \geq \langle \chi u \rangle^{r/2} \big \langle u^{r^\prime} \big \rangle^{(2-r)/2}$$ because $r > 2$ and $$\big \langle u^{r^\prime} \big \rangle^{(2-r)/2} \geq \hat{c}^\frac{2-r}{2}(t-s)^\frac{d(r-2)}{4(r-1)} 2^\frac{d(2-r)}{4(r-1)}. $$ The second inequality follows from the first one, Claim \ref{_claim11_} and the equality $\langle \chi \rangle = \omega_d (\beta (t-s))^{d/2}.$
\end{proof}

\begin{claim}
\label{_claim13_}
Fix a $\beta \geq \max(\beta^*, (4 c_+)^2),$ where $\beta^*$ and $c_+$ are defined before Claim \ref{_claim1_} and in Proposition \ref{mb_thm}, respectively. Then $\langle \chi u(t,x;\tau,\cdot) \rangle \geq \frac{1}{2}$ for all $\tau \in [s, t].$
\end{claim}
\begin{proof}[Proof of Claim \ref{_claim13_}] Recalling that $2 |x-y| \leq \sqrt{\beta(t-s)},$ and so $|o-\cdot| \leq 4^{-1} \sqrt{\beta(t-s)} + |x-\cdot|,$ we have
\begin{align*}
\langle (1-\chi)u \rangle & = \int_{|o-z| \geq \sqrt{\beta(t-s)}} u(t,x;\tau,z) d z \\
& \leq \bigg \langle \frac{|o-\cdot|}{\sqrt{\beta(t-s)}} u(t,x;\tau,\cdot) \bigg \rangle \\
& \leq \bigg \langle \frac{|o-x|+|x-\cdot|}{\sqrt{\beta(t-s)}} u(t,x;\tau,\cdot) \bigg \rangle \\
& \leq \frac{1}{4} + \frac{1}{\sqrt{\beta(t-s)}}M(\tau) \\
& \leq \frac{1}{4} + \frac{c_+ \sqrt{t-s}}{\sqrt{\beta(t-s)}} \leq \frac{1}{2}
\end{align*}
and hence $\langle \chi u \rangle = 1 - \langle (1-\chi)u \rangle \geq \frac{1}{2}.$
\end{proof}

Claim \ref{_claim12_} and Claim \ref{_claim13_} combined yield
\[
\big \langle \chi u^\theta \Phi \big \rangle \geq \big \langle \chi u^\theta \big \rangle [-G(\tau) -\tilde{Q}(t-\tau)-c_1 ],\\
\tag{$\star \star$}
\]
where $c_1 = c^{\prime \prime} + (r-1) \log 2 > 0$ with $r= \max(2,p_c).$

Using last inequality and Claim \ref{_claim10_}, we end the proof of Proposition \ref{prop_G2} as follows.
Set
\[
I_0 := - G(\tau) - \tilde{Q}(t-\tau) -\frac{3}{2} c(\delta_a) (t-\tau).
\]
If $I_0 \geq 2 c_1$ for all  $\tau \in [s, t_s],$ then $I_0 - c_1 \geq \frac{1}{2}I_0 > c_1 > 0,$ and so by Claim \ref{_claim10_} and then by Claim \ref{_claim12_} and $(\star \star)$,
\begin{align*}
\frac{d}{d \tau} I_0 & \geq c(\beta) (t-s)^{-1+\frac{d}{2}\frac{2-r}{r-1}} \big  \langle \chi u^\theta \big\rangle^2 \bigg[ I_0 -c_1 + \frac{3}{2} c(\delta_a) (t-\tau) \bigg]^2\\
& \geq c (t-s)^{-1} I_0^2 \qquad \text{ with } c= c(\beta)(c^\prime)^2 2^{-r-2},
\end{align*}
or $-\frac{d}{d \tau} I_0^{-1} \geq c (t-s)^{-1}.$ Integrating the latter over $[s,t_s]$ yields $I_0^{-1}(s) \geq \frac{c}{2}$, or $I_0(s) \leq \frac{2}{c}$, or
\[
G(s) \geq - \tilde{Q}(t-s) - \frac{3}{2}c(\delta_a)(t-s) - \frac{2}{c}.
\]
If $I_0 < 2 c_1$ for some $\tau \in [s, t_s],$ then by Claim \ref{_claim1_}, $\frac{d}{d \tau} I_0 \geq 0,$ and hence $G(s)+\tilde{Q}(t-s) +\frac{3}{2}c(\delta_a)(t-s) \geq G(\tau) + \tilde{Q}(t-\tau) + \frac{3}{2} c(\delta_a) (t-\tau) \geq -2 c_1.$
\end{proof}

\medskip

\subsection{$G$-bound for $-\nabla \cdot a \cdot \nabla + \nabla \cdot b$} 


Set $\Lambda_*=A + \nabla \cdot b$.
Let $U_*^{t,s}$ denote the solution of
\[ 
	\left\{ \begin{array}{rcl}
		-\frac{d}{d t} U_*^{t,s} f = \Lambda_*(t) U_*^{t,s} f & , & 0 < t-s \leq 1, \\
	0 \leq f \in L^1 \cap L^\infty & 
	\end{array} \right.
\]
in $L^p =L^p(\mathbb{R}^d), \;p \in [1,\infty[.$ 

Set $u_*(t):= U_*^{t,s}f$ and let $u_*(t,x;s,y)$ denote the heat kernel of $\Lambda_*$. We introduce
\[
Q(t)\equiv Q(t;s,y) := - \langle u_*(t,\cdot; s, y)) \log u_*(t,\cdot; s, y) \rangle,
\]
\[
M(t)\equiv M(t;s,y) := \langle |y-\cdot| u_*(t,\cdot; s, y) \rangle, \text{ and }
\]
\[
G(t)
:= \langle k_\beta(t-s,o -\cdot)\log u_*(t, \cdot; s,y) \rangle.
\]
We will need the integral bound 
\[
\langle u^{p'}_*(t,\cdot;s,y)\rangle\leq c_g(t-s)^{-\frac{d}{2(p-1)}}\quad \forall p\in [p_c,\infty[, \; 0 <t-s \leq 1. \tag{$\bullet$}
\]

\medskip

\begin{proof}[Proof of {\rm($\bullet$)}]
Clearly, $-\langle\frac{d}{dt}u_*(t),u_*^{q-1}(t)\rangle=\langle (A+\nabla\cdot b)u_*(t),u_*^{q-1}(t)\rangle$, $1<q\leq p_c'=\frac{2}{\sqrt{\delta_a}}$. Thus, setting $w:=\langle u_*q\rangle$, $v:=u_*^\frac{q}{2}$, $J:=\|A^\frac{1}{2}v\|_2^2$, we have
\[
-\frac{1}{q}\frac{d}{dt}w=\frac{2}{q'}\big(\frac{2}{q}J-\langle bv,\nabla v\rangle\big).
\]
Using assumption $b\in\mathbf{F}_{\delta_a}(A)$, $\delta_a<4$ we have by quadratic estimates, $|\langle bv,\nabla v\rangle |\leq J^\frac{1}{2}(\delta_a J+gw)^\frac{1}{2}$, and so
\[
-\frac{d}{dt}w\geq 2(q-1)\big[\big(\frac{2}{q}-\sqrt{\delta_a}\big)J-\frac{1}{2\sqrt{\delta_a}}c(\delta_a) w\big] \tag{$\ast$}.
\]
From $(\ast)$ we have $\|u_*(t)\|_q\leq \|f\|_q e^\frac{\|g\|_1}{q'\sqrt{\delta_a}}$. In particular, $\|u_*(t)\|_1\leq \|f\|_1$. Also from $(\ast)$ we obtain $\|u_*(t)\|_q\leq c_g(t-s)^{-\frac{d}{2q'}}\|f\|_1$, and by duality, $\|(U^{t,s}_*)^*\|_{q'\to\infty}\leq c_g(t-s)^{-\frac{d}{2q'}}$. Now, $(\bullet)$ is evident.
\end{proof}

\medskip

Armed with ($\bullet$), we repeat word by word the arguments from the previous section, arriving at the following proposition. 

\begin{proposition}
\label{prop_G3}
Let $\beta$ and $\mathbb{C}$ be (generic) constants defined in Proposition \ref{prop_G2} and Proposition \ref{mb_thm}, respectively. 
$$
G(t_s) \geq - \tilde{Q}(t_s-s) - \mathbb{C}, \qquad t_s=\frac{t+s}{2}
$$
for all $0 \leq t-s \leq 1$ and $ x,y \in \mathbb{R}^d$ such that $2 |x-y| \leq \sqrt{\beta(t-s)}.$
\end{proposition}

Similarly to Proposition \ref{prop_G2}, the proof works under more general assumptions than in Theorem \ref{second_thm}, i.e.\,we may assume that $b$ is as in Section \ref{int_bd_sect}.

\subsection{A priori lower bound}

Recall that $a \in (H_{\sigma,\xi})$ and $b$ are smooth, $b$ is bounded,  ${\rm div\,}b \geq 0$.

It is seen from the Duhamel formula that, since ${\rm div\,}b \geq 0$, $$u_*(t,x;s,y) \leq u(t,x;s,y), \quad 0 <t-s \leq 1.$$
We have
\[
u(t,x;s,y) \geq (4 \pi \beta (t-t_s))^{d/2} \langle k_\beta (t-t_s,o -\cdot) u(t,x;t_s,\cdot) u(t_s,\cdot;s,y) \rangle,
\]
and, for all $2 |x-y| \leq \sqrt{\beta(t-t_s)}$, due to Proposition \ref{prop_G2} and Proposition \ref{prop_G3},
\begin{align*}
\log u(t,x;s,y) & \geq \log (4 \pi \beta)^{d/2} + \tilde{Q}(t-t_s)\\
& + \langle k_\beta (t-t_s,o-\cdot) \log u(t,x;t_s,\cdot) \rangle + \langle k_\beta (t-t_s,o-\cdot) \log u_*(t_s,\cdot; s,y) \rangle \\
& \geq \log (4 \pi \beta)^{d/2} - \tilde{Q}(t-t_s)  -2 \mathbb{C} \\
& = - \tilde{Q}(t-s)  -2 \mathbb{C} + \log (8 \pi \beta)^{d/2},
\end{align*}
so a Gaussian lower bound for $u(t,x;s,y)$ follows but only for  $2 |x-y| \leq \sqrt{\beta(t-t_s)}$.
Now, the standard argument (``'small gains yield large gain'), see e.g.\,\cite[Theorem 3.3.4]{Da}, and the reproduction property of $u(t,x;s,y)$ give

\begin{theorem}
\label{lb_div_}
There exist generic constants $c_0 \geq 0$ and $c_1$, $c_2 > 0$ such that, for all $x,y \in \mathbb{R}^d$
\begin{equation*}
c_1 k_{c_2}(t-s,x-y) e^{-c_0(t-s)} \leq u(t,x;s,y)
\end{equation*}
for all $0 \leq s < t <\infty$.
\end{theorem}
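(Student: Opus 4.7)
Combining the Duhamel identity for $\Lambda$ versus $\Lambda_*$ (which, because ${\rm div\,}b\geq 0$, yields $u_*(t,x;s,y)\leq u(t,x;s,y)$), the Chapman--Kolmogorov identity at the midpoint $t_s=(t+s)/2$, and the two $G$-bounds of Propositions \ref{prop_G2} and \ref{prop_G3}, the preceding display already supplies a ``near-diagonal'' lower bound
\[
u(t,x;s,y)\geq c_1' k_{c_2'}(t-s;x-y)\, e^{-c_0(t-s)} \quad \text{whenever } 2|x-y|\leq \sqrt{\beta(t-t_s)},\;0\leq s<t\leq 1,
\]
with generic constants $c_1',c_2',\beta>0$ and $c_0\geq 0$ (and $c_0=0$ when $c(\delta_a)=0$, since every appearance of $c(\delta_a)$ in Propositions \ref{mb_thm}, \ref{prop_G2}, \ref{prop_G3} multiplies a factor that then vanishes). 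What remains is to remove the restriction $2|x-y|\leq\sqrt{\beta(t-t_s)}$ and to pass from $t-s\leq 1$ to arbitrary $t-s<\infty$.

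For the removal of the distance restriction I would apply the classical ``small gains yield a large gain'' chaining argument (Davies, Theorem 3.3.4 in the cited reference). Given $x,y$ with $|x-y|^2>\tfrac18\beta(t-s)$, set $N:=\lceil 16|x-y|^2/[\beta(t-s)]\rceil$ and choose intermediate pilot points $x_i:=y+i(x-y)/N$, $i=0,\ldots,N$, and intermediate times $t_i:=s+i(t-s)/N$. The reproduction property gives
\[
u(t,x;s,y)\geq \int_{B_1}\!\!\cdots\!\!\int_{B_{N-1}} \prod_{i=1}^N u(t_i,x_i;t_{i-1},z_{i-1})\, dz_1\cdots dz_{N-1},
\]
where each $B_i$ is a ball of radius of order $\sqrt{(t-s)/N}$ centred at $x_i$, chosen so that every consecutive pair $(x_i,z_{i-1})$ still satisfies the near-diagonal condition for the step length $(t-s)/N$. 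Applying the near-diagonal bound on each factor and using that each ball has volume comparable to $((t-s)/N)^{d/2}$, the product telescopes to
\[
u(t,x;s,y)\geq C^{N} ((t-s)/N)^{-d/2} e^{-c_0(t-s)}\geq c_1 (t-s)^{-d/2} e^{-c|x-y|^2/(t-s)} e^{-c_0(t-s)}
\]
after substituting $N\asymp |x-y|^2/(t-s)$; this is the required Gaussian factor (possibly with enlarged $c_2$).

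For the extension from short to arbitrary time, given $t-s>1$ I would split $[s,t]$ into $m=\lceil t-s\rceil$ consecutive subintervals of length $\leq 1$ and iterate the bound just established, invoking once more the reproduction property; the accumulated short-time factors combine into an overall Gaussian of the stated form, and the accumulated exponentials $e^{-c_0}$ merge into $e^{-c_0(t-s)}$. The only mildly delicate point in the whole procedure is the bookkeeping in the spatial chaining: one must verify that with the chosen $N$ and ball radii, the near-diagonal hypothesis is met simultaneously on every subinterval, and that the loss $C^N$ combines with the geometric factor $N^{d/2}$ to produce an \emph{honest} Gaussian exponent; but this is a fixed, $b$-independent calculation and it is this step that the text refers to as ``standard''.
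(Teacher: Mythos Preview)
Your proposal is correct and follows essentially the same route as the paper: the paper also combines the Duhamel comparison $u_*\leq u$ (from ${\rm div\,}b\geq 0$), the reproduction identity at the midpoint $t_s$, and the two $G$-bounds (Propositions \ref{prop_G2} and \ref{prop_G3}) to obtain the near-diagonal lower bound, and then invokes the same ``small gains yield large gain'' chaining argument (citing \cite[Theorem 3.3.4]{Da}) together with the reproduction property to remove the restrictions on $|x-y|$ and $t-s$. Your write-up in fact spells out the chaining and time-extension steps in more detail than the paper does; the only cosmetic slip is that in your chained integral the factors should read $u(t_i,z_i;t_{i-1},z_{i-1})$ with $z_0=y$, $z_N=x$ (the pilot points $x_i$ are the centres of the balls $B_i$, not the arguments of $u$).
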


We emphasize that the constants $c_1$, $c_2$ are generic, and thus do not depend on the smoothness of $a$, $b$, and the boundedness of $b$.

\subsection{A posteriori lower bound} 

\label{apost_sect0}

We now exclude the assumption of the smoothness of $a$, $b$, and the boundedness of $b$ by constructing a smooth bounded approximation of $b \in \mathbf{F}_\delta \equiv \mathbf{F}_\delta(-\Delta)$ that preserves the relative bound $\delta$ and the constant $c(\delta)$.

 Define
$$
b_\varepsilon:=E_\varepsilon b,
$$
where, recall, $E_\varepsilon f:= e^{\varepsilon\Delta}f$ ($\varepsilon>0$) denotes the De Giorgi mollifier of $f$.

\begin{claim} \label{claim1}
The following is true:

\smallskip

1. $b_\varepsilon\in [L^\infty\cap C^\infty]^d$. 

\smallskip

2.~$b_\varepsilon \in \mathbf{F}_\delta$ with the same $c(\delta)$ (thus, independent of $\varepsilon$). 
\end{claim}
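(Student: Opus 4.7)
The plan is to reduce both claims to a single pointwise inequality
\[
|b_\varepsilon(x)|^2 \;\leq\; E_\varepsilon(|b|^2)(x), \qquad x \in \mathbb R^d,
\]
obtained componentwise from Cauchy--Schwarz: for each $i$, $\bigl(\int p_\varepsilon(x,y)b_i(y)\,dy\bigr)^2 \leq \int p_\varepsilon(x,y)b_i(y)^2\,dy$, where $p_\varepsilon(x,y) = (4\pi\varepsilon)^{-d/2}e^{-|x-y|^2/(4\varepsilon)}$ is the kernel of $E_\varepsilon=e^{\varepsilon\Delta}$, and $\int p_\varepsilon(x,\cdot)=1$. Summing over $i$ gives the displayed bound. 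The whole argument consists of pairing this single inequality with $b\in\mathbf{F}_\delta$ in two different ways.

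\emph{Part 1 ($L^\infty\cap C^\infty$).} Fix $x\in\mathbb R^d$. The pointwise bound rewrites as
\[
|b_\varepsilon(x)|^2 \;\leq\; \big\||b|\,\sqrt{p_\varepsilon(x,\cdot)}\,\big\|_2^2,
\]
so I apply the form-bound to $f:=\sqrt{p_\varepsilon(x,\cdot)}\in W^{1,2}$. Direct computation gives $\|f\|_2^2 = \|p_\varepsilon(x,\cdot)\|_1 = 1$ and $\|\nabla f\|_2^2 = \frac{1}{16\varepsilon^2}\int |x-y|^2 p_\varepsilon(x,y)\,dy = d/(8\varepsilon)$, hence
\[
|b_\varepsilon(x)|^2 \;\leq\; \delta\,\frac{d}{8\varepsilon}+c(\delta)
\]
uniformly in $x$. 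Smoothness is then free from the semigroup decomposition $b_\varepsilon = E_{\varepsilon/2}(E_{\varepsilon/2}b)$: the inner factor lies in $[L^\infty]^d$ by the above, and convolution of a bounded function against the Schwartz-class kernel $p_{\varepsilon/2}$ is $C^\infty$ with bounded derivatives of every order (differentiate under the integral).

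\emph{Part 2 (preservation of $\mathbf{F}_\delta$).} For $f\in W^{1,2}$ I combine the pointwise bound with the $L^2$ self-adjointness of $E_\varepsilon$:
\[
\big\||b_\varepsilon|\,f\big\|_2^2 \;\leq\; \langle E_\varepsilon(|b|^2),f^2\rangle \;=\; \langle |b|^2,E_\varepsilon(f^2)\rangle.
\]
The key step is to set $g := \sqrt{E_\varepsilon(f^2)}\geq 0$ and apply the form-bound of $b$ to $g$. Conservation of mass gives $\|g\|_2^2 = \|E_\varepsilon(f^2)\|_1 = \|f\|_2^2$, while differentiating the identity $g^2 = E_\varepsilon(f^2)$ yields $g\nabla g = E_\varepsilon(f\,\nabla f)$, and the same Cauchy--Schwarz identity applied pointwise now gives
\[
|g\,\nabla g|^2 \;\leq\; E_\varepsilon(f^2)\,E_\varepsilon(|\nabla f|^2) \;=\; g^2\,E_\varepsilon(|\nabla f|^2),
\]
i.e.\ $|\nabla g|^2 \leq E_\varepsilon(|\nabla f|^2)$ (wherever $g\neq 0$), so that $\|\nabla g\|_2^2 \leq \|\nabla f\|_2^2$ and $g\in W^{1,2}$. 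The form-bound applied to $g$ concludes:
\[
\big\||b_\varepsilon|\,f\big\|_2^2 \;\leq\; \delta\,\|\nabla g\|_2^2 + c(\delta)\,\|g\|_2^2 \;\leq\; \delta\,\|\nabla f\|_2^2 + c(\delta)\,\|f\|_2^2,
\]
so $b_\varepsilon\in\mathbf{F}_\delta$ with the very same constants $\delta,\,c(\delta)$.

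\emph{Main obstacle.} The only non-routine point is the justification of $g\in W^{1,2}$ and of the chain rule for $\nabla g$ on the zero set of $g$. The standard remedy is to carry out the argument first for $g_\eta := \sqrt{E_\varepsilon(f^2)+\eta}>0$, for which the manipulations above are immediate, and then to let $\eta\downarrow 0$; the estimates are uniform in $\eta$ and the limit is monotone, so the conclusion persists. All other steps are elementary once the pointwise inequality $|b_\varepsilon|^2\leq E_\varepsilon(|b|^2)$ is in hand.
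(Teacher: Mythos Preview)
Your proof is correct and follows essentially the same route as the paper: both parts hinge on the pointwise inequality $|b_\varepsilon|^2\le E_\varepsilon(|b|^2)$, followed by applying the form-bound to $\sqrt{p_\varepsilon(x,\cdot)}$ for boundedness and to $g=\sqrt{E_\varepsilon(f^2)}$ (with the same Cauchy--Schwarz gradient estimate $|\nabla g|^2\le E_\varepsilon(|\nabla f|^2)$) for preservation of the form-bound. The only cosmetic differences are that you address the chain-rule issue on $\{g=0\}$ via an $\eta$-regularization, whereas the paper instead comments on why $\|b\sqrt{E_\varepsilon|f|^2}\|_2<\infty$ via a truncation-and-Fatou argument; both are routine.
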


\begin{proof}[Proof of Claim \ref{claim1}]
1.~Since $b_\varepsilon=E_{\varepsilon/2}E_{\varepsilon/2}b$, it suffices to only prove that $|b_\varepsilon| \in L^\infty$. We have by Fatou's Lemma,
\begin{align*}
|b_\varepsilon(x)| & \leq \liminf_n\big\langle e^{\varepsilon\Delta}(x,\cdot)\mathbf{1}_{B(0,n)}(\cdot) |b(\cdot)|\big\rangle \\
& \leq \liminf_n\big\langle e^{\varepsilon\Delta}(x,\cdot)\mathbf{1}_{B(0,n)}(\cdot)|b(\cdot)|^2\big\rangle^{\frac{1}{2}} \leq \big(\delta \big\langle \big|\nabla \sqrt{e^{\varepsilon\Delta}(x,\cdot)}\big|^2\big\rangle + c(\delta)\big)^{\frac{1}{2}},
\end{align*}
where $\big|\nabla_y \sqrt{ e^{\varepsilon\Delta}(x,y)}\big|=(4\pi\varepsilon)^{-\frac{d}{4}} \frac{|x-y|}{4\varepsilon}e^{-\frac{|x-y|^2}{8\varepsilon}} \leq C\varepsilon^{-\frac{d}{4}-\frac{1}{2}} e^{-\frac{c|x-y|^2}{\varepsilon}}$, and so $|b_\varepsilon| \in L^\infty$ for each $\varepsilon>0$. 

\smallskip

2. Indeed, $|b_\varepsilon|\leq\sqrt{E_\varepsilon |b|^2}$, and so
\begin{align*}
\|b_\varepsilon f\|_2^2 &\leq \langle E_\varepsilon |b|^2,|f|^2\rangle=\|b\sqrt{E_\varepsilon|f|^2}\|^2_2 \\
&\leq \delta\|\nabla\sqrt{E_\varepsilon|f|^2}\|_2^2+c(\delta)\|f\|_2^2, \quad f \in W^{1,2},
\end{align*}
where
\begin{align*}
\|\nabla\sqrt{E_\varepsilon|f|^2}\|_2 & =\big\|\frac{E_\varepsilon(|f||\nabla|f|)}{\sqrt{E_\varepsilon|f|^2}}\big\|_2\\
&\leq \|\sqrt{E_\varepsilon|\nabla |f||^2}\|_2=\|E_\varepsilon|\nabla |f||^2\|_1^\frac{1}{2}\\
& \leq\|\nabla|f|\|_2\leq \|\nabla f\|_2,
\end{align*}
i.e. $b_\varepsilon\in\mathbf{F}_{\delta}$. [The fact that $\|b\sqrt{E_\varepsilon|f|^2}\|_2<\infty$ follows from $\mathbf{1}_{\{|b|\leq n\}}b\in \mathbf{F}_\delta$, the inequality $\|\mathbf{1}_{\{|b|\leq n\}}b \sqrt{E_\varepsilon|f|^2}\|_2^2\leq \delta\|\nabla f\|_2^2+c(\delta)\|f\|_2^2$ and Fatou's Lemma].
\end{proof}

\begin{claim}
\label{claim1__}
${\rm div\,}b_\varepsilon \geq 0$. 
\end{claim}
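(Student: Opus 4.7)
The plan is to combine the commutativity of the heat semigroup $E_\varepsilon=e^{\varepsilon\Delta}$ with differentiation, together with the strict positivity of the Gaussian kernel. Since $b_\varepsilon \in C^\infty$ by Claim \ref{claim1}, the divergence ${\rm div\,}b_\varepsilon$ is a smooth function, so proving it is $\geq 0$ pointwise reduces to proving $\langle {\rm div\,}b_\varepsilon, \phi\rangle \geq 0$ for every $\phi \in C_c^\infty(\mathbb R^d)$ with $\phi \geq 0$.

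The first step is to establish the key commutation identity ${\rm div\,}b_\varepsilon = E_\varepsilon({\rm div\,}b)$ in the distributional sense. For a test function $\phi \in C_c^\infty$ the formal calculation
\[
\langle {\rm div\,}b_\varepsilon, \phi\rangle = -\langle b_\varepsilon, \nabla \phi\rangle = -\langle b, E_\varepsilon \nabla\phi\rangle = -\langle b, \nabla E_\varepsilon \phi\rangle = \langle {\rm div\,}b, E_\varepsilon \phi\rangle
\]
carries things through: the second equality is Fubini combined with the symmetry $e^{\varepsilon\Delta}(x,y)=e^{\varepsilon\Delta}(y,x)$, the third is the commutation of $\nabla$ with Gaussian convolution (both are Fourier multipliers acting on the Schwartz function $\phi$), and the last pairing is legitimate because $E_\varepsilon \phi \in \mathcal S(\mathbb R^d)$ (heat-evolution of a $C_c^\infty$ function decays rapidly together with all derivatives) while ${\rm div\,}b$ is tempered by hypothesis. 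With the identity in hand, positivity is immediate: for $\phi \geq 0$ the function $E_\varepsilon \phi$ is pointwise nonnegative by strict positivity of the heat kernel, so $\langle {\rm div\,}b, E_\varepsilon \phi\rangle \geq 0$, hence $\langle {\rm div\,}b_\varepsilon, \phi\rangle \geq 0$, and smoothness of ${\rm div\,}b_\varepsilon$ upgrades this to a pointwise inequality.

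The only step that demands a bit of care is justifying the chain of equalities as genuine absolutely convergent Lebesgue integrals rather than formal manipulations, since $b$ is only in $L^2_\loc$ with no global integrability. This is handled once one observes that $E_\varepsilon \nabla \phi \in \mathcal S(\mathbb R^d)$ has Gaussian decay, so the integral $\int |b(y)|\,|E_\varepsilon \nabla\phi(y)|\,dy$ converges: form-boundedness applied to a standard cutoff supported in $B(0,R+1)$ yields $\int_{B(0,R)}|b|^2\leq CR^{d}$, i.e.\,polynomial $L^2_\loc$ growth, which is comfortably defeated by Gaussian decay. I expect no other substantive obstacle; the claim is essentially the fact that heat-semigroup mollification preserves the sign of a distributional divergence, via the positivity-preserving property of $E_\varepsilon$ applied to ${\rm div\,}b$.
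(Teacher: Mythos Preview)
Your proof is correct and follows essentially the same approach as the paper: both compute $\langle b_\varepsilon,\nabla\varphi\rangle = \langle b,\nabla E_\varepsilon\varphi\rangle$ via the symmetry of the heat kernel and the commutation of $\nabla$ with $E_\varepsilon$, then invoke the hypothesis ${\rm div\,}b\geq 0$ against the nonnegative Schwartz function $E_\varepsilon\varphi$. Your version is more thorough in that you explicitly justify the absolute convergence of the pairings (via the polynomial $L^2_\loc$ growth of $b$ coming from form-boundedness against Gaussian decay of $E_\varepsilon\nabla\phi$), a point the paper leaves implicit.
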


\begin{proof}
Indeed, since ${\rm div\,}b \geq 0$ in the sense of tempered distributions, i.e.\,$\langle b,\nabla \varphi \rangle \leq 0$ for every $0 \leq \varphi \in \mathcal S$, we have
$
\langle b_\varepsilon,\nabla \varphi \rangle = \langle b, \nabla E_\varepsilon \varphi \rangle \leq 0,
$
as needed. 
\end{proof}

We are in position to complete the proof of Theorem \ref{second_thm}. 

By Claim \ref{claim1}, $b_\varepsilon \in \mathbf{F}_\delta$, $\delta<4\sigma^2$, and so $b_\varepsilon \in \mathbf{F}_{\delta_a}(A)$, $\delta_a=\sigma^{-2}\delta<4$. Thus, \cite[Theorems 4.2, 4.3]{KiS2} apply and yield that the limit 
$$
s{\mbox-}L^p\mbox{-}\lim_{\varepsilon \downarrow 0} \lim_{\varepsilon_1 \downarrow 0}e^{-t\Lambda_{\varepsilon_1,\varepsilon}} \quad (\text{locally uniformly in $t \geq 0$}), \quad p> \frac{2}{2-\sqrt{\sigma^{-2}\delta}},
$$
where $$
\Lambda_{\varepsilon_1,\varepsilon}:=-\nabla \cdot a_{\varepsilon_1} \cdot \nabla + b_\varepsilon \cdot \nabla, \quad a_{\varepsilon_1}:=E_{\varepsilon_1}a \in (H_{\sigma,\xi}),\quad D(\Lambda_{\varepsilon_1,\varepsilon})=W^{2,p}
$$
exists and determines in $L^p$ a positivity preserving $L^\infty$-contraction, quasi contraction $C_0$ semigroup of integral operators, say, $e^{-t\Lambda}$.

Next, by Claim \ref{claim1} and Claim \ref{claim1__}, Theorem \ref{lb_div_} applies to the heat kernel $e^{-t\Lambda_{\varepsilon_1,\varepsilon}}$ with constants $c_0$-$c_2$ independent of $\varepsilon_1$, $\varepsilon$. Therefore, for every pair of balls $B_1$, $B_2 \subset \mathbb R^d$ we have
$$
c_1 e^{-c_0 t} \langle \mathbf{1}_{B_1},e^{tc_2 \Delta}\mathbf{1}_{B_2} \rangle \leq \langle \mathbf{1}_{B_1},e^{-t\Lambda_{\varepsilon,\varepsilon_1}}\mathbf{1}_{B_2} \rangle.
$$
Now, passing to the limit in $\varepsilon_1$ and then in $\varepsilon$, we obtain
$$
c_1 e^{-c_0 t} \langle \mathbf{1}_{B_1},e^{tc_2 \Delta}\mathbf{1}_{B_2} \rangle \leq \langle \mathbf{1}_{B_1},e^{-t\Lambda}\mathbf{1}_{B_2} \rangle.
$$
Applying the Lebesgue Differentiation Theorem, we complete the proof of Theorem \ref{second_thm}.

\bigskip

\section{Proof of Theorem 2A}

\label{ub_sect1}

Recall that, by the assumption of Theorem 2A, $b \in \mathbf{F}_\delta$, $0<\delta<\infty$, and so
\begin{equation*}
b \in \mathbf{F}_{\delta_a }(A),\;\;\delta_a=\sigma^{-2}\delta<\infty.
\end{equation*}

Recall that a constant is called \textit{generic} if it only depends on the dimension $d$, the ellipticity constants $\sigma$, $\xi$, the form-bound $\delta_a$ and the constants $c(\delta_a)$.

We will first prove Theorem 2A for the smoothed out coefficients $a_{\varepsilon_1}$, $b_\varepsilon$ (see Theorem \ref{apr_thm2} below). By Claim \ref{claim1}, $b_\varepsilon$ are bounded and are in $\mathbf{F}_\delta$ with the same $c(\delta)$ (thus, independent of $\varepsilon$). 

\subsection{A remark on the approximation of Kato class potentials}

\label{kato_rem_sect}

Let $V \in \mathbf{K}^d_\nu$. Define 
$$
V_\varepsilon = E_\varepsilon V, \quad \varepsilon>0,
$$
where, recall, $E_\varepsilon f:= e^{\varepsilon\Delta}f$ ($\varepsilon>0$) denotes the De Giorgi mollifier of $f$. 
Below we will be interested in the case $$V={\rm div\,}b_+, \text{ so } V_\varepsilon= E_\varepsilon{\rm div\,}b_+.$$

\begin{claim} \label{claim2} 
\smallskip
1.~$V_\varepsilon \in \mathbf{K}^d_\nu$ with the same $\lambda=\lambda(\nu)$ (independent of $\varepsilon$),

2.~$V_\varepsilon \in L^\infty \cap C^\infty$.

\end{claim}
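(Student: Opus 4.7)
The plan is to establish Part 1 by a short commutation and positivity argument, and then to reduce Part 2 to a single pointwise comparison between the Gaussian heat kernel at time $\varepsilon$ and the Bessel kernel $G_\lambda(x,y):=(\lambda-\Delta)^{-1}(x,y)$.

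For Part 1, I would start from the pointwise bound $|V_\varepsilon|=|e^{\varepsilon\Delta}V|\leq e^{\varepsilon\Delta}|V|$ (positivity of the heat kernel). Since $e^{\varepsilon\Delta}$ and $(\lambda-\Delta)^{-1}$ are both Fourier multipliers and hence commute, and $e^{\varepsilon\Delta}$ is an $L^\infty$-contraction,
$$
\|(\lambda-\Delta)^{-1}|V_\varepsilon|\|_\infty\leq \|(\lambda-\Delta)^{-1}e^{\varepsilon\Delta}|V|\|_\infty=\|e^{\varepsilon\Delta}(\lambda-\Delta)^{-1}|V|\|_\infty\leq\|(\lambda-\Delta)^{-1}|V|\|_\infty\leq\nu,
$$
giving $V_\varepsilon\in\mathbf K^d_\nu$ with the same $\lambda=\lambda(\nu)$.

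For Part 2, the main input will be the pointwise estimate
$$
e^{\varepsilon\Delta}(x,y)\leq C(\varepsilon,\lambda,d)\,G_\lambda(x,y),\qquad x,y\in\mathbb R^d,
$$
for some finite constant $C(\varepsilon,\lambda,d)$. To justify it, I would note that for $d\geq 3$ the Bessel kernel behaves like $c_d|x-y|^{-(d-2)}$ near the diagonal (where $(4\pi\varepsilon)^{-d/2}e^{-|x-y|^2/(4\varepsilon)}$ is bounded) and decays only like $|x-y|^{-(d-1)/2}e^{-\sqrt{\lambda}|x-y|}$ at infinity (slower than the Gaussian); the ratio is continuous and tends to $0$ at both ends, hence uniformly bounded. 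Applying this and the Kato bound yields
$$
|V_\varepsilon(x)|\leq\langle e^{\varepsilon\Delta}(x,\cdot)|V(\cdot)|\rangle\leq C(\varepsilon,\lambda,d)\bigl((\lambda-\Delta)^{-1}|V|\bigr)(x)\leq C(\varepsilon,\lambda,d)\,\nu,
$$
where the computation is made rigorous (using only $|V|\in L^1_\loc$) via Fatou's lemma applied to truncations $\mathbf 1_{B(0,n)}|V|$, exactly as in the proof of Claim \ref{claim1}. The $C^\infty$ regularity then follows from the semigroup identity $V_\varepsilon=e^{(\varepsilon/2)\Delta}V_{\varepsilon/2}$: since $V_{\varepsilon/2}\in L^\infty$ by what we just proved, differentiation under the integral sign against the smooth Gaussian kernel produces derivatives of every order.

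The main obstacle is the pointwise heat-vs-Bessel comparison; once it is in place, both parts are routine. This inequality is a purely kernel-theoretic fact in $\mathbb R^d$, $d\geq 3$, and plays here the role that the form-boundedness computation played in Claim \ref{claim1} for the vector field $b$.
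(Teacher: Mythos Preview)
Your proof is correct. For Part 1 it is essentially the paper's argument: the paper works by duality in $L^1$ (showing $\||V|_\varepsilon(\lambda-\Delta)^{-1}f\|_1=\||V|(\lambda-\Delta)^{-1}E_\varepsilon f\|_1\le\nu\|f\|_1$), while you carry out the same commutation directly in $L^\infty$; the content is identical.

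For Part 2 you take a genuinely different route. The paper observes that $V\in\mathbf K^d_\nu$ implies the form-bound $\langle|V|,|f|^2\rangle\le\nu\|\nabla f\|_2^2+\lambda\nu\|f\|_2^2$ (this was proved in the introduction), and then plugs in $f=\sqrt{e^{\varepsilon\Delta}(x,\cdot)}$ exactly as in Claim~\ref{claim1}(1); this unifies the treatment of $b$ and $V$ and avoids any kernel asymptotics. Your argument replaces that step by the pointwise comparison $e^{\varepsilon\Delta}(x,y)\le C(\varepsilon,\lambda,d)\,(\lambda-\Delta)^{-1}(x,y)$, which is correct for $d\ge3$ (the ratio is continuous on $(0,\infty)$ and vanishes at both ends, as you say) and yields $\|V_\varepsilon\|_\infty\le C\nu$ in one stroke. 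Your approach is more self-contained and gives the $L^\infty$ bound directly from the Kato norm without passing through form-boundedness; the paper's approach is shorter once Claim~\ref{claim1} is in hand and does not require knowing the Bessel-kernel asymptotics.
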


\begin{proof}[Proof of Claim \ref{claim2}] 

1.~By duality, it suffices to prove that $\||V_\varepsilon|(\lambda-\Delta)^{-1}f\|_1 \leq \nu\|f\|_1$, $f \in L^1$. We have $|V_\varepsilon| \leq |V|_\varepsilon$, and
\begin{align*}
\||V|_\varepsilon(\lambda-\Delta)^{-1}f\|_1 &= \||V|(\lambda-\Delta)^{-1}E_\varepsilon f\|_1 \\
&\leq \nu\|E_\varepsilon f\|_1 \leq \nu\|f\|_1.
\end{align*}

2.~Since $V_\varepsilon$, $\varepsilon>0$ are form-bounded,
$$
\langle |V_\varepsilon|,|f|^2\rangle \leq \nu \langle |\nabla f|^2 \rangle + c_\nu \langle |f|^2\rangle, \quad f \in W^{1,2}, \quad c_\nu=\lambda \nu,
$$
see the introduction, we can argue as in the proof of assertion 1 of Claim \ref{claim1}.
\end{proof}

\subsection{Upper bound for the auxiliary operator $-\nabla \cdot a_{\varepsilon_1} \cdot \nabla + b_\varepsilon \cdot \nabla + E_\varepsilon  {\rm div\,}b_+$}

\label{fbd_6_subsect}

Set $$A_{\varepsilon_1}=-\nabla \cdot a_{\varepsilon_1} \cdot \nabla,$$
and
$$H^+ = A_{\varepsilon_1} + b_\varepsilon \cdot \nabla + E_\varepsilon\mydiv b_+.$$
Let $H^{t,s}f$ denote the solution of
\[ 
	\left\{ \begin{array}{rcl}
		-\frac{d}{d t} H^{t,s} f = H^+ H^{t,s} f & , & 0 \leq s < t <\infty \\
	0 \leq f \in L^1 \cap L^\infty & 
	\end{array} \right.
	\tag{$CP_{H^+}$}
\]
in $L^p =L^p(\mathbb{R}^d), \;p \in [1,\infty[.$ 

Let $h(t,x;s,y)$ denote the heat kernel of $H^+$, that is, $H^{t,s}f=\langle h(t,x;s,\cdot)f(\cdot)\rangle$.

\begin{theorem}
\label{aux_ub_thm}
There exist generic constants $c_3,c_4 > 0$, $\omega\geq 0$ such that
\[
h(t,x;s,y) \leq  c_3 k_{c_4}(t-s,x-y)e^{-(t-s)\omega}
\tag{$\mbox{UGB}^{h_+}$}
\]
for all $0 \leq s<t <\infty$.
\end{theorem}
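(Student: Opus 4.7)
The plan is to prove $(\mbox{UGB}^{h_+})$ by the Davies tilting scheme combined with an $L^p$-Moser iteration. The roadmap: (i) establish $L^1\to L^\infty$ ultracontractivity for $e^{-tH^+}$ with generic constants and rate $(t-s)^{-d/2}$; (ii) do the same for each tilted operator $H^+_\alpha := e^{\alpha\cdot x}H^+ e^{-\alpha\cdot x}$, $\alpha\in\mathbb R^d$, at the cost of an additional factor $e^{\omega(|\alpha|)(t-s)}$ with $\omega(|\alpha|)\lesssim|\alpha|^2+1$ (generic); (iii) use the kernel identity $e^{-(t-s)H^+_\alpha}(x,y)=e^{\alpha\cdot(x-y)}h(t,x;s,y)$ and optimize $\alpha$ to extract the Gaussian decay in $|x-y|$.

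For step (i), set $u=H^{t,s}f$ with $0\leq f\in L^1\cap L^\infty$, $v=u^{p/2}$, $p\geq 2$. Differentiation plus integration by parts on the drift gives
\[
-\tfrac{d}{dt}\|u\|_p^p = \tfrac{4(p-1)}{p}\langle\nabla v\cdot a_{\varepsilon_1}\cdot\nabla v\rangle - \langle\mydiv b_\varepsilon,u^p\rangle + p\langle E_\varepsilon\mydiv b_+,u^p\rangle.
\]
Since $\mydiv b_\varepsilon = E_\varepsilon\mydiv b_+ - E_\varepsilon\mydiv b_-$, the last two contributions combine to $(p-1)\langle E_\varepsilon\mydiv b_+,u^p\rangle+\langle E_\varepsilon\mydiv b_-,u^p\rangle\geq 0$, and therefore $-\frac{d}{dt}\|u\|_p^p \geq \frac{4(p-1)\sigma}{p}\|\nabla v\|_2^2$. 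The role of the $E_\varepsilon\mydiv b_+$ potential is purely to cancel the divergence produced by moving $\nabla$ off the drift; its $L^\infty$ norm (which is $\varepsilon$-dependent) never enters the bound. Plugging into the Nash inequality and iterating via Coulhon-Raynaud (Appendix \ref{app_B}), exactly as in Section \ref{int_bd_sect}, yields $\|H^{t,s}\|_{1\to\infty}\leq c(t-s)^{-d/2}$ with $c$ generic.

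For step (ii), a direct calculation gives
\[
H^+_\alpha g = A_{\varepsilon_1}g + (b_\varepsilon+2a_{\varepsilon_1}\alpha)\cdot\nabla g + \bigl(E_\varepsilon\mydiv b_+ + \nabla\cdot(a_{\varepsilon_1}\alpha) - \alpha\cdot a_{\varepsilon_1}\alpha - b_\varepsilon\cdot\alpha\bigr)g.
\]
Repeating the energy computation for $w=e^{-tH^+_\alpha}f$, $v=w^{p/2}$, the cancellation of step (i) survives, but four $\alpha$-dependent correction terms appear. Those involving $2a_{\varepsilon_1}\alpha$, $\nabla\cdot(a_{\varepsilon_1}\alpha)$ and $-\alpha\cdot a_{\varepsilon_1}\alpha$ are easily absorbed via $|a_{\varepsilon_1}|\leq\xi$ and Cauchy-Schwarz against $\|\nabla v\|_2$. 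The delicate term is $-p\langle b_\varepsilon\cdot\alpha,w^p\rangle$: since $b_\varepsilon$ is bounded only in an $\varepsilon$-dependent way, we cannot use $\|b_\varepsilon\|_\infty$. Instead we exploit the form-boundedness $\|b_\varepsilon v\|_2^2\leq\delta\|\nabla v\|_2^2+c(\delta)\|v\|_2^2$ (Claim \ref{claim1}) to estimate
\[
|\langle b_\varepsilon\cdot\alpha,v^2\rangle|\leq |\alpha|\|b_\varepsilon v\|_2\|v\|_2,
\]
absorbing a small fraction into $\sigma\|\nabla v\|_2^2$ and leaving a remainder $\lesssim (p^2|\alpha|^2+1)\|w\|_p^p$ with generic constants. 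Collecting everything,
\[
-\tfrac{d}{dt}\|w\|_p^p \geq c_p'\|\nabla v\|_2^2 - K(p,|\alpha|)\|w\|_p^p, \qquad K(p,|\alpha|)\lesssim p^2(|\alpha|^2+1),
\]
and the Moser iteration on the corrected quantity $e^{K(p,|\alpha|)(t-s)}\|w\|_p^p$ (doubling $p=2,4,8,\dots$ so that the time-interval shrinkage tames the $p^2$-growth of $K$) delivers $\|e^{-(t-s)H^+_\alpha}\|_{1\to\infty}\leq c(t-s)^{-d/2}e^{C(|\alpha|^2+1)(t-s)}$ with generic $c,C$. Step (iii) and optimization then yield $(\mbox{UGB}^{h_+})$ with generic $c_3,c_4$ and $\omega\geq 0$.

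The main obstacle throughout is keeping the constants \emph{generic}, i.e.\,independent of $\|b_\varepsilon\|_\infty$ and $\|E_\varepsilon\mydiv b_+\|_\infty$. This is accomplished by (a) the divergence cancellation in step (i), which reduces the role of $E_\varepsilon\mydiv b_+$ from an integrability hypothesis to a mere sign condition, and (b) systematic use of form-boundedness of $b_\varepsilon$ (with $\varepsilon$-independent $\delta$ and $c(\delta)$ from Claim \ref{claim1}) in step (ii) in place of its non-uniform $L^\infty$ bound.
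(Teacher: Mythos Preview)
Your proposal is correct and follows essentially the same route as the paper: Davies tilting plus Moser iteration, with the two key observations being exactly those you single out---the divergence cancellation (your step (i), and in the paper the identity $\frac{1}{p'}\langle v^2, E_\varepsilon\mydiv b_+\rangle + \frac{1}{p}\langle v^2, E_\varepsilon\mydiv b_-\rangle \geq 0$) and the use of form-boundedness of $b_\varepsilon$ (with $\varepsilon$-independent $\delta,c(\delta)$) to control $\langle \alpha\cdot b_\varepsilon, v^2\rangle$. The only cosmetic difference is that the paper shifts by $\omega = c(\delta_a)/(2\delta_a)$ before tilting, whereas you carry the additive constant through the iteration; both yield the same generic constants.
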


\begin{proof}[Proof of Theorem] $\mathbf{1}.$ Since $A_{\varepsilon_1} + b_\varepsilon \cdot \nabla + E_\varepsilon\mydiv b_+ = A_{\varepsilon_1} + \nabla \cdot b_\varepsilon + E_\varepsilon\mydiv b_-$ (where $E_\varepsilon\mydiv b_-(x)$ and $E_\varepsilon\mydiv b_+(x)$ are uniformly bounded in $x\in\mathbb{R}^d$ and smooth by the assumptions of Theorem 2A and Claim \ref{claim2}, respectively), 
\[
 \langle \hat{h} \rangle := \langle h(t , x ; s , \cdot) \rangle \leq 1 \text{ and }\langle h \rangle := \langle h(t , \cdot ; s , y) \rangle \leq 1 .
\]
Also, since $\mydiv b_\varepsilon=E_\varepsilon\mydiv b=E_\varepsilon\mydiv b_+-E_\varepsilon\mydiv b_-$,
\begin{align*}
& \langle H^+ h , h \rangle = J + \frac{1}{2} \langle E_\varepsilon| \mydiv b | , h^2 \rangle \geq J , \;\;J := \langle \nabla h \cdot a_{\varepsilon_1} \cdot \nabla h \rangle , \\
& \langle (H^+)^*(s) \hat{h} , \hat{h} \rangle = \hat{J} + \frac{1}{2} \langle E_\varepsilon |\mydiv b | ,\hat{h}^2 \rangle \geq \hat{J} , \;\;\hat{J} := \langle \nabla \hat{h} \cdot a_{\varepsilon_1} \cdot \nabla \hat{h} \rangle ,
\end{align*}
and so estimating $\| H^{t,s} \|_{ L^1 \to L^2}$ and $\| \big( H^{t,s} \big)^* \|_{L^1 \to L^2}$ by means of the Nash inequality, we obtain 
\[
h(t , x ; s , y) \leq c (t-s)^{-d/2} , \;\; c = c (d , \sigma) .
\tag{$\mbox{NIE}^{h_+}$} 
\]
Here $x,y \in \mathbb{R}^d$ and $0 \leq s < t <\infty$.

\medskip
 
$\mathbf{2}.$ In order to prove $(\mbox{UGB}^{h_+})$ we consider
\[ 
	\left\{ \begin{array}{rcl}
		-\frac{d}{d t} H_\alpha^{t,s} f = H_\alpha^+ H_\alpha^{t,s} f & , & 0 \leq s < t <\infty, \\
	0 \leq f \in L^1 \cap L^\infty & 
	\end{array} \right.
	\tag{$CP_{H_\alpha^+}$}
\]
in $L^p =L^p(\mathbb{R}^d), \;p \in [1,\infty[,$ where $H^{t,s}_\alpha := e^{\alpha \cdot x} H^{t,s} e^{-\alpha \cdot x}$ and 
\[
 H_\alpha^+:= e^{\alpha \cdot x}(\omega+ H^+) e^{-\alpha \cdot x} =\omega+ H^+ - \alpha \cdot b_\varepsilon - \alpha \cdot a_{\varepsilon_1} \cdot \alpha + \alpha \cdot a_{\varepsilon_1} \cdot \nabla + \nabla \cdot a_{\varepsilon_1} \cdot \alpha,
\] 
\[
\omega=\frac{c(\delta_a)}{2\delta_a}.
\]

To shorten notation, in the rest of this section we write $A \equiv A_{\varepsilon_1}$.

\begin{MoserLemma} There are generic constants $c, c_4$ such that, for all $0 \leq s < t < \infty,$
\[
\|H^{t,s}_\alpha \|_{2 \to \infty}, \|H^{t,s}_\alpha \|_{1 \to 2} \leq c (t-s)^{-d/4} e^{c_4 \alpha^2(t-s)}.
\]
\end{MoserLemma}

\begin{proof}[Proof of Lemma] We follow \cite[Sect.\,1]{FS}.
Set $u_\alpha(t) := H_\alpha^{t,s} f$, $v(t):= u_\alpha^{p/2}(t)$, $p\geq 2.$ Noticing that $\langle b_\varepsilon \cdot \nabla u_\alpha, u_\alpha^{p-1} \rangle = \frac{2}{p} \langle \nabla v, b_\varepsilon v \rangle = -\frac{1}{p} \langle v^2, E_\varepsilon \mydiv b \rangle$, we have by the dynamic equation
\begin{align*}
-\frac{1}{p} \frac{d}{d t} \langle v^2(t) \rangle & = \omega\|v(t)\|_2^2 + \frac{4}{p p^\prime} \|A^{1/2} v(t) \|_2^2 +\frac{1}{p^\prime} \langle v^2(t),E_\varepsilon\mydiv b_+ \rangle +\frac{1}{p} \langle v^2(t),E_\varepsilon\mydiv b_- \rangle \\
& - \frac{2(p-2)}{p} \langle \alpha \cdot a_{\varepsilon_1} \cdot \nabla v(t), v(t) \rangle -\langle \alpha \cdot b_\varepsilon, v^2(t) \rangle -\langle \alpha \cdot a_{\varepsilon_1} \cdot \alpha, v^2(t) \rangle.
\end{align*}
By quadratic estimates and by ($b_\varepsilon \in \mathbf{F}_\delta$, see Claim \ref{claim1} $\Rightarrow$ $b_\varepsilon\in\mathbf{F}_{\delta_a}(A)$, $\delta_a=\sigma^{-2}\delta$),
\begin{align*}
-\frac{1}{p} \frac{d}{d t}\|v\|_2^2 & \geq \frac{4}{p p^\prime}(1-\kappa -\gamma \delta_a) \|A^{1/2} v \|_2^2 + \bigg[\omega- \frac{4}{p p^\prime} \gamma c(\delta_a)\bigg] \|v\|_2^2 \\
& - \bigg[1 +\frac{1}{4 \gamma}\frac{p p^\prime}{4} + \frac{1}{4 \kappa}\frac{p p^\prime}{4} 4 \bigg(\frac{p-2}{p}\bigg)^2 \bigg] \xi \alpha^2 \|v\|_2^2.
\end{align*}
Choosing here $\gamma = \kappa/\delta_a$, $\kappa = \frac{1}{2}$ we obtain
\[
-\frac{1}{p} \frac{d}{d t} \|v\|^2_2 \geq \bigg[\omega - \frac{2}{p p^\prime}\frac{c(\delta_a)}{\delta_a} \bigg] \|v\|_2^2 - \bigg[ 1+ \frac{p p^\prime}{8} \bigg(\delta_a +4\bigg(\frac{p-2}{p}\bigg)^2 \bigg) \bigg]\xi \alpha^2 \|v\|_2^2.
\]
In particular $\frac{d}{d t}\|u_\alpha\|_2 \leq \frac{2+\delta_a}{2} \xi \alpha^2 \|u_\alpha\|_2,$ and so 
\[
 \|u_\alpha(t)\|_2 \leq  e^{\frac{2+\delta_a}{2}  \xi \alpha^2 (t-s)}\|f\|_2 .\\
\tag{$\star$}
\]
Choosing $\gamma = \kappa/\delta_a$, $\kappa = \frac{1}{4}$ we obtain
\[
-\frac{1}{p} \frac{d}{d t} \|v\|_2^2 \geq \frac{2}{p p^\prime}\|A^{1/2} v \|_2^2 + \bigg[\omega- \frac{1}{p p^\prime}\frac{c(\delta_a)}{\delta_a}\bigg]  \|v\|_2^2 - \bigg[ 1+ \frac{p p^\prime}{4} \bigg(\delta_a +4\bigg(\frac{p-2}{p}\bigg)^2 \bigg) \bigg]\xi \alpha^2 \|v\|_2^2.
\]
Let $p \geq 4.$ Then $\bigg[\omega- \frac{1}{p p^\prime}\frac{c(\delta_a)}{\delta_a}\bigg]\geq 0$ and $\bigg[ 1+ \frac{p p^\prime}{4} \bigg(\delta_a +4\bigg(\frac{p-2}{p}\bigg)^2 \bigg) \bigg] \leq p C_{\delta_a}, \; C_{\delta_a} =1+\frac{\delta_a}{4}.$ Therefore
\[
-\frac{d}{d t} \|v\|_2^2 \geq \|A^{1/2} v \|_2^2 - C_{\delta_a} p^2 \xi \alpha^2 \|v\|_2^2 \\
\tag{$\star\star$}
\]
Using the Nash inequality $\|A^{1/2}v\|_2^2 \geq \sigma C_N \|v\|_2^{2+\frac{4}{d}} \|v\|_1^{-\frac{4}{d}},$ we obtain from $(\star\star)$
\[
-2 \frac{d}{d t} \|v\|_2 \geq \sigma C_N \|v\|_2^{1+\frac{4}{d}} \|v\|_1^{-\frac{4}{d}} - C_{\delta_a} p^2 \xi \alpha^2 \|v\|_2, \text{ or }
\]
\[
\frac{d}{d t} \|v\|_2^{-4/d} \geq \frac{2 \sigma C_N}{d} \|v\|_1^{-4 /d} - \frac{2}{d} C_{\delta_a} p^2 \xi \alpha^2 \|v\|_2^{-4/d}.
\]
The last inequality is linear with respect to $w_p = \|v\|_2^{-4/d}.$ Therefore, setting $c_g :=\frac{2 \sigma C_N}{d}$ and
\[
\mu_p(t):= \frac{2}{d} C_{\delta_a} p^2 \xi \alpha^2 (t-s),
\] 
we have
\begin{align*}
w_p(t) & \geq c_g e^{-\mu_p(t)} \int_s^t e^{\mu_p(r)} w_\frac{p}{2} (r)d r \\
& \geq  c_g e^{-\mu_p(t)} \int_s^t e^{\mu_p(r)} (r-s)^q d r \;V_\frac{p}{2}(t),
\end{align*}
where $ q = \frac{p}{2} - 2$ and 
\begin{align*}
V_\frac{p}{2}(t):= & \inf[(r-s)^{-q}w_\frac{p}{2}(r) \mid  s \leq r \leq t ] \\
= & \bigg \{ \sup \bigg[(r-s)^\frac{q d}{2 p} \|u_\alpha(r)\|_{p/2} \mid s\leq r \leq t\bigg] \bigg \}^{-\frac{2p}{d}}.
\end{align*}
Set $\beta = 2 d^{-1}C_{\delta_a}\xi \alpha^2.$ Since $e^{-\mu_p(t)} \int_s^t e^{\mu_p(r)} (r-s)^q d r \geq e^{- \beta p^2 (t-s)} \int_s^t e^{\beta  p^2 (r-s)}(r-s)^q d r$ and
\begin{align*}
\int_s^t e^{\beta  p^2 (r-s)}(r-s)^q d r & = \bigg(\frac{t-s}{\beta p^2} \bigg)^{q+1} \int_0^{\beta  p^2} e^{(t-s) r} r^q d r \\
& \geq \bigg(\frac{t-s}{\beta p^2} \bigg)^{q+1} e^{\beta (p^2 -1)(t-s)} \int_{\beta p^2 (1-p^{-2})}^{\beta p^2} r^q d r \\
& = \frac{(t-s)^\frac{p-2}{2}}{p-2} 2 \big[1-(1-p^{-2})^{p-1} \big] e^{\beta (p^2 -1)(t-s)} \\
& \geq K p^{-2} (t-s)^\frac{p-2}{2} e^{\beta (p^2 -1)(t-s)},
\end{align*}
where $K:= 2 \inf \big \{ p \big[1-(1-p^{-2})^{p-1} \big] \mid p \geq 2 \big \} > 0,$ we obtain
\[
w_p(t) \geq \tilde{c}_g K p^{-2} e^{-\beta (t-s)} (t-s)^\frac{p-2}{2} V_\frac{p}{2}(t),
\]
or, setting $W_p(t) := \sup \big[ (r-s)^\frac{d(p-2)}{4 p} \|u_\alpha (r) \|_p \mid s \leq r \leq t \big],$
\[
W_p(t) \leq (\tilde{c}_g K)^{-\frac{d}{2 p}} p^\frac{d}{p} e^{\frac{C_{\delta_a} \xi \alpha^2}{p} (t-s)} W_{p/2}(t), \;\; p = 2^k, \;k= 1, 2, \dots.
\]
Iterating this inequality, starting with $ k = 2,$ yields $(t-s)^\frac{d}{4} \|u_\alpha (t)\|_\infty \leq C_g  e^{C_{\delta_a}\xi \alpha^2 (t-s)} W_2 (t).$ Finally, taking into account $(\star),$ we arrive at $$\|H^{t,s}_\alpha \|_{2 \to \infty} \leq (t-s)^{-d/4} C_g e^{ C_{\delta_a} \xi \alpha^2 (t-s)}.$$ The same bound holds for $\|H^{t,s}_\alpha \|_{1 \to 2}.$ To see this it is enough to note that, for $H^+ \equiv H^+(b),$ $(H^+_\alpha(b))^* = H^+_{-\alpha} (-b).$ 
\end{proof}

We obtain $e^{-tH^+}(x,y)\leq C e^{\omega t}t^{-\frac{d}{2}}e^{\alpha\cdot(y-x)+c_4\alpha^2t}$, $c_4=C_{\delta_a}\xi$. The proof of 
$(\mbox{UGB}^{h_+})$ is completed upon putting $\alpha=\frac{x-y}{2c_4}$.
\end{proof}

\subsection{Upper bound for $-\nabla \cdot a_{\varepsilon_1} \cdot \nabla + b_\varepsilon \cdot \nabla$}

\label{fbd_7_subsect}

\begin{theorem}
\label{apr_thm2}
In the assumptions of Theorem 2A, 
there exist generic constants  $c_i$ {\rm ($i=3,4,5$)} such that the heat kernel $u(t,x;s,y)=e^{-(t-s)\Lambda_{\varepsilon_1,\varepsilon}}(x,y)$ of $\Lambda_{\varepsilon_1,\varepsilon}=-\nabla \cdot a_{\varepsilon_1} \cdot \nabla + b_\varepsilon \cdot \nabla$ satisfies
\[
u(t,x;s,y) \leq c_3 k_{c_4}(t-s;x-y) e^{c_5 (t-s)}
\]
for all $x, y \in \mathbb{R}^d$ and $0 \leq s < t < \infty$.
\end{theorem}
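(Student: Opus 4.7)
The plan is to view $\Lambda_{\varepsilon_1,\varepsilon}$ as a Kato-class perturbation of the auxiliary operator $H^+$ already treated in Theorem \ref{aux_ub_thm}. Indeed,
$$
\Lambda_{\varepsilon_1,\varepsilon} = H^+ - V_\varepsilon, \qquad V_\varepsilon := E_\varepsilon\,{\rm div\,}b_+,
$$
and, by Claim \ref{claim2}, $V_\varepsilon \in L^\infty\cap C^\infty$ lies in $\mathbf{K}^d_\nu$ with the same $\lambda(\nu)$; in particular, its Kato data are generic (independent of $\varepsilon$). Writing $u$ for the heat kernel of $\Lambda_{\varepsilon_1,\varepsilon}$ and $h$ for that of $H^+$, Duhamel's formula reads
$$
u(t,x;s,y) = h(t,x;s,y) + \int_s^t\!\!\int_{\mathbb R^d} h(t,x;r,z)\,V_\varepsilon(z)\,u(r,z;s,y)\,dz\,dr,
$$
and, since $V_\varepsilon\geq 0$, iterating produces a non-negative Neumann series $u=\sum_{n\geq 0} u_n$, with $u_0=h$ and $u_{n+1}$ obtained from $u_n$ by one further Duhamel convolution with $V_\varepsilon$ against $h$.

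Inserting the Gaussian upper bound $h(t,x;s,y)\leq c_3 k_{c_4}(t-s;x-y)e^{-\omega(t-s)}$ supplied by Theorem \ref{aux_ub_thm}, the problem reduces to iterated Gaussian--Kato convolutions. The core one-step estimate is the standard fact that, for each $c'_4>c_4$, there exists a constant $C=C(d,c_4,c'_4)$ such that
$$
\int_s^t\!\!\int_{\mathbb R^d} k_{c_4}(t-r;x-z)\,V_\varepsilon(z)\,k_{c_4}(r-s;z-y)\,dz\,dr \leq C\bigl(\nu + \lambda(\nu)(t-s)\bigr)\,k_{c'_4}(t-s;x-y),
$$
uniformly in $0\leq s<t<\infty$ and $x,y\in\mathbb R^d$. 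This is obtained by factoring the product of Gaussians as $k_{c'_4}(t-s;x-y)$ times a probability density in $z$ of spread $\sim(t-s)^{1/2}$, and then controlling the residual integral against $V_\varepsilon$ via the defining inequality $\|(\lambda(\nu)-\Delta)^{-1}V_\varepsilon\|_\infty\leq\nu$. With this tool in hand, each additional Duhamel iteration contributes a factor $C(\nu+\lambda(\nu)(t-s))$; by assumption (3) of Theorem \ref{kain_thm} (smallness of $\nu$ depending on $d,\sigma,\xi,\delta$), we may arrange $C\nu<1$, so the geometric part of the series converges and sums to
$$
u(t,x;s,y) \leq c_3\,k_{c'_4}(t-s;x-y)\,e^{c_5(t-s)}
$$
with $c_5$ absorbing the linear-in-$t$ contribution $C\lambda(\nu)(t-s)$ together with the exponent $\omega$ from Theorem \ref{aux_ub_thm}.

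The main technical obstacle is to carry out the Gaussian--Kato convolution estimate while tracking the constants carefully enough that the price $C(d,c_4,c'_4)$ paid for relaxing the Gaussian parameter $c_4\to c'_4$ is absorbed by the smallness of $\nu$; this is the quantitative content of hypothesis (3) of Theorem \ref{kain_thm} and the sole reason why $\nu$ has to be small rather than merely finite. Once the one-step bound is secured, the remaining steps---Duhamel iteration, Neumann summation, and bookkeeping of exponential factors---are standard, and the resulting constants $c_3, c'_4, c_5$ depend only on the generic data $d,\sigma,\xi,\delta,\nu$ and $c(\delta),\lambda(\nu)$, as required.
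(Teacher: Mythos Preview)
Your proposal is correct and is precisely the approach taken in the paper: write $\Lambda_{\varepsilon_1,\varepsilon}=H^+-E_\varepsilon\,{\rm div\,}b_+$, use the Gaussian upper bound on $h$ from Theorem \ref{aux_ub_thm}, and obtain the bound on $u$ via the Duhamel series together with the Kato condition $E_\varepsilon\,{\rm div\,}b_+\in\mathbf{K}^d_\nu$ (Claim \ref{claim2}). The paper records this as a one-line ``standard argument''; your write-up simply supplies the details. One small remark: the Gaussian factorization $k_{c_4}(t-r;x-z)k_{c_4}(r-s;z-y)=k_{c_4}(t-s;x-y)\cdot k_{c_4}(\tau_r;z-w_r)$ is exact, so no relaxation $c_4\to c_4'$ is actually needed at each step, and the linear-in-time term $\lambda(\nu)(t-s)$ is handled by first proving the bound on a short interval and then iterating via the reproduction property to produce the exponential $e^{c_5(t-s)}$.
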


\begin{proof}
We have $\Lambda = H^+ - E_\varepsilon\mydiv b_+$, so the proof follows from Theorem \ref{aux_ub_thm} and a standard argument using the Duhamel formula and the fact that $E_\varepsilon\mydiv b_+ \in \mathbf{K}^d_\nu$ (Claim \ref{claim2}). 
(If $c(\delta_a)=0$ and $\lambda(\nu)=0$, then we arrive at global in time Gaussian upper bound.)
\end{proof}

\subsection{A posteriori upper bound} We are in position to complete the proof of Theorem 2A.

In Theorem \ref{apr_thm2} we have established the upper bound on the heat kernel of
$$
\Lambda_{\varepsilon_1,\varepsilon}:=-\nabla \cdot a_{\varepsilon_1} \cdot \nabla + b_\varepsilon \cdot \nabla, \quad D(\Lambda_{\varepsilon_1,\varepsilon})=W^{2,2},
$$
where $a_{\varepsilon_1}:=E_{\varepsilon_1}a \in (H_{\sigma,\xi})$, $\varepsilon_1>0$,
with constants independent of $\varepsilon_1$, $\varepsilon$. It remains to pass to the limit $\varepsilon_1 \downarrow 0$ and then $\varepsilon \downarrow 0$. Since $b \in \mathbf{F}_\delta$ with $\delta$ that is assumed to be only finite, we can not appeal to \cite[Theorems 4.2, 4.3]{KiS2} as in the proof of Theorem \ref{second_thm}. Instead, we will use

\begin{proposition}
\label{prop_approx}
In the assumptions of Theorem 2A,
the limit 
$$
s{\mbox-}L^2\mbox{-}\lim_{\varepsilon \downarrow 0} \lim_{\varepsilon_1 \downarrow 0}e^{-t\Lambda_{\varepsilon_1,\varepsilon}} \quad \text{(locally uniformly in $t \geq 0$)}
$$
exists and determines a positivity preserving $L^\infty$-contraction quasi contraction $C_0$ semigroup in $L^2$, say, $e^{-t\Lambda}$.
\end{proposition}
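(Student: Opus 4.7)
The plan is to establish the two nested limits separately, leaning in both stages on the uniform Gaussian upper bound of Theorem \ref{apr_thm2}.

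\textit{Stage 1 (inner limit $\varepsilon_1\downarrow 0$).} For fixed $\varepsilon>0$, Claim \ref{claim1} gives $b_\varepsilon\in L^\infty\cap C^\infty$, so $b_\varepsilon\cdot\nabla$ is a bounded operator $W^{1,2}\to L^2$, uniformly in $\varepsilon_1$. As $a_{\varepsilon_1}\to a$ a.e.\ (and in $L^p_{\loc}$ for every $p<\infty$) while remaining in $(H_{\sigma,\xi})$, I would invoke Mosco convergence of the symmetric Dirichlet forms $\langle a_{\varepsilon_1}\nabla\cdot,\nabla\cdot\rangle$ to $\langle a\nabla\cdot,\nabla\cdot\rangle$, which yields strong resolvent convergence $A_{\varepsilon_1}\to A$. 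A Neumann-series perturbation argument at large $\lambda$, or equivalently the Trotter--Kato theorem applied to the bounded first-order perturbation $b_\varepsilon\cdot\nabla$, then gives $e^{-t\Lambda_{\varepsilon_1,\varepsilon}}\to e^{-t\Lambda_\varepsilon}$ strongly in $L^2$, locally uniformly in $t\ge 0$, where $\Lambda_\varepsilon:=-\nabla\cdot a\cdot\nabla+b_\varepsilon\cdot\nabla$. The limit inherits the Gaussian upper bound of Theorem \ref{apr_thm2} with constants independent of $\varepsilon$.

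\textit{Stage 2 (outer limit $\varepsilon\downarrow 0$).} Since $\delta$ is only finite, $\Lambda$ cannot be defined via forms, so my plan is to show that $\{e^{-t\Lambda_\varepsilon}f\}$ is Cauchy in $L^2$ for $f\in C_c^\infty$, locally uniformly in $t$; the limit will then automatically generate a $C_0$ semigroup. For $\varepsilon>\varepsilon'>0$, Duhamel yields
\[
e^{-t\Lambda_{\varepsilon'}}f-e^{-t\Lambda_\varepsilon}f=\int_0^t e^{-(t-s)\Lambda_\varepsilon}(b_\varepsilon-b_{\varepsilon'})\cdot\nabla e^{-s\Lambda_{\varepsilon'}}f\,ds,
\]
which I would estimate in $L^2$ using three uniform ingredients: (a) the Gaussian bound of Theorem \ref{apr_thm2}, providing $L^q\to L^r$ mapping estimates on $e^{-(t-s)\Lambda_\varepsilon}$; (b) the energy identity
\[
\tfrac{1}{2}\tfrac{d}{ds}\|u\|_2^2+\langle a\nabla u,\nabla u\rangle=\tfrac{1}{2}\langle E_{\varepsilon'}\mydiv b_+-E_{\varepsilon'}\mydiv b_-,u^2\rangle,\quad u:=e^{-s\Lambda_{\varepsilon'}}f,
\]
in which the first right-hand term is absorbed via Claim \ref{claim2}(1) as $\langle E_{\varepsilon'}\mydiv b_+,u^2\rangle\le\nu\|\nabla u\|_2^2+\nu\lambda\|u\|_2^2$ while the second has a favourable sign, giving $\int_0^t\|\nabla u\|_2^2\,ds\le C(t,f)$ uniformly in $\varepsilon'$ provided $\nu$ is small; and (c) the convergence $b_\varepsilon\to b$ in $L^2_{\loc}$, with the tail at infinity handled via the uniform $L^\infty$-bound on $u$ supplied by the Gaussian estimate. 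Together these will make the Duhamel integral vanish as $\varepsilon,\varepsilon'\downarrow 0$. Positivity preservation, $L^\infty$-contraction and quasi-contraction on $L^2$ will then transfer to $e^{-t\Lambda}$ by passing to the limit in the $L^2$ pairing against $L^2\cap L^\infty$ test functions, and the $C_0$-property at $t=0$ will follow from dominated convergence combined with the uniform bounds.

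\textit{Main obstacle.} The hard point will be Stage 2, specifically the uniform-in-$\varepsilon'$ gradient estimate. Because $\delta$ may be arbitrarily large, the drift $b_\varepsilon\cdot\nabla$ is not relatively small with respect to the elliptic part, so form-perturbation does not close directly; it is precisely the Kato-class hypothesis on $\mydiv b_+$ (with $\nu$ small, dependent on $d,\sigma,\xi,\delta$) combined with the favourable sign of $-E_\varepsilon\mydiv b_-$ after integration by parts that restores the coercivity needed to close the estimate, and this is exactly where assumptions (2) and (3) of Theorem \ref{kain_thm} enter in an essential way.
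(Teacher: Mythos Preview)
Your plan is very close to the paper's argument and identifies exactly the same three ingredients: a uniform energy bound $\int_0^T\|\nabla u_{\varepsilon'}\|_2^2\,ds\le C$ obtained from the Kato hypothesis on $\mydiv b_+$ and the favourable sign of $-E_{\varepsilon'}\mydiv b_-$; the convergence $b_\varepsilon\to b$ in $L^2_{\loc}$; and a pointwise bound on the relevant function coming from the Gaussian upper estimate. Stage~1 is handled in the paper in one line (since $b_\varepsilon$ is bounded, $\Lambda_\varepsilon$ is a bounded perturbation of $A$ on $D(A)$), and your Mosco-convergence route is a perfectly acceptable alternative.

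The one point where your write-up does not close is the use of the Duhamel formula in Stage~2. In your representation
\[
g(t)=\int_0^t e^{-(t-s)\Lambda_\varepsilon}\,(b_\varepsilon-b_{\varepsilon'})\cdot\nabla u_{\varepsilon'}(s)\,ds,
\]
the coefficient difference sits against $\nabla u_{\varepsilon'}$, and with $b\in\mathbf F_\delta$ you only know that $b$ is a multiplier $W^{1,2}\to L^2$; you have no uniform control on $\|(b_\varepsilon-b_{\varepsilon'})\,\nabla u_{\varepsilon'}\|_2$ (this would require second-order bounds on $u_{\varepsilon'}$, unavailable for measurable $a$), nor can you recover an $L^2$-estimate by going through $L^1\to L^2$ smoothing since $(t-s)^{-d/4}$ fails to be integrable once $d\ge 4$. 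Consequently, neither the local nor the tail splitting you sketch can be carried out at the level of $(b_\varepsilon-b_{\varepsilon'})\cdot\nabla u_{\varepsilon'}$.

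The paper avoids this by testing the equation for $g=u_{\varepsilon_n}-u_{\varepsilon_m}$ against $g$ itself: after integration by parts and absorbing $\langle b_{\varepsilon_n}\cdot\nabla g,g\rangle$ via the Kato bound on $E_{\varepsilon_n}\mydiv b_+$, one obtains
\[
\sup_{t\le T}\|g(t)\|_2^2 + c\int_0^T\|\nabla g\|_2^2\;\le\; C\int_0^T\big|\langle(b_{\varepsilon_n}-b_{\varepsilon_m})\cdot\nabla u_{\varepsilon_m},g\rangle\big|\,d\tau,
\]
and Cauchy--Schwarz places the $b$-difference against $g$ rather than against $\nabla u$: the right-hand side is controlled by $\|\nabla u_{\varepsilon_m}\|_{L^2_tL^2_x}\cdot\|(b_{\varepsilon_n}-b_{\varepsilon_m})g\|_{L^2_tL^2_x}$. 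Now the pointwise Gaussian bound gives $|g|\le F$ with $F$ a fixed (smooth, rapidly decaying) function, so the tail $\|(1-\eta_R)(b_{\varepsilon_n}-b_{\varepsilon_m})g\|_2$ is estimated by $\mathbf F_\delta$ applied to $(1-\eta_R)F\in W^{1,2}$, while the local part vanishes by $L^2_{\loc}$-convergence of $b_\varepsilon$. This small algebraic rearrangement---energy identity for $g$ instead of Duhamel---is precisely what makes your three ingredients fit together.
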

\begin{proof}

Since $b_\varepsilon \in [L^\infty \cap C^\infty]^d$, the limit
$$
e^{-t\Lambda_\varepsilon}=s{\mbox-}L^2\mbox{-} \lim_{\varepsilon_1 \downarrow 0}e^{-t\Lambda_{\varepsilon_1,\varepsilon}} 
$$
exists and determines a quasi contraction $C_0$ semigroup (positivity preserving $L^\infty$-contraction), and
$\Lambda_\varepsilon=A+b_\varepsilon \cdot \nabla$, $D(\Lambda_\varepsilon)=D(A)$.

 Thus, it remains to pass to the limit $\varepsilon \downarrow 0$. It suffices to prove that $e^{-t\Lambda_\varepsilon}f$ converges strongly in $L^2$ for every $0 \leq f \in C_c^\infty$, and then apply a density argument. 
 
In what follows, the constant $\nu$ is from Theorem \ref{apr_thm2} but possibly taken smaller, if needed, so that $\nu<2\sigma$. We have
\begin{align}
\label{div_b_eq}
\tag{$\star$}
{\rm div\,}b_\varepsilon=E_\varepsilon {\rm div\,}b=E_\varepsilon ({\rm div\,}b_+ - {\rm div\,}b_-) = E_\varepsilon {\rm div\,}b_+ - E_\varepsilon {\rm div\,}b_-,
\end{align}
where $0\leq E_\varepsilon {\rm div\,}b_-\in L^\infty\cap C^\infty$ by assumption (2) of Theorem 2A, and   $0\leq E_\varepsilon {\rm div\,}b_+ \in L^\infty\cap C^\infty$ by Claim \ref{claim2}.

1. Set $u \equiv u_\varepsilon=e^{-t\Lambda_\varepsilon}f$. Using the equation for $u$, we have
$$
\frac{1}{2}\frac{d}{dt}\langle u^2\rangle + \langle a \cdot \nabla u,\nabla u \rangle + \langle b_\varepsilon \cdot \nabla u, u \rangle =0.
$$
Since $u$ satisfies a \textit{qualitative} Gaussian upper bound (i.e.\,with constants that a priori depend on the smoothness of the coefficients), we find that
\begin{align*}
-\langle b_\varepsilon \cdot \nabla u, u \rangle & = \frac{1}{2} \langle {\rm div\,}b_{\varepsilon},u^2\rangle \\
& (\text{we are using \eqref{div_b_eq}}) \\
& \leq \frac{1}{2} \langle e^{\varepsilon \Delta}{\rm div\,}b_+,u^2\rangle.
\end{align*}
Since $E_\varepsilon{\rm div\,} b_+ \in \mathbf{K}^d_\nu$ by Claim \ref{claim2}, $E_\varepsilon{\rm div\,} b_+$ is form-bounded: $\langle E_\varepsilon{\rm div\,}b_{+},u^2\rangle \leq \nu \langle |\nabla u|^2 \rangle + c_\nu \langle u^2\rangle$, $c_\nu=\lambda \nu$  (see the introduction).
Hence
$$
\frac{d}{dt}\langle u^2\rangle + 2\langle a \cdot \nabla u,\nabla u \rangle - \nu\langle |\nabla u|^2\rangle -c_\nu\langle u^2\rangle \leq 0.
$$
Thus, for $t \in [0,T]$,
$$
e^{-c_\nu t}\langle u^2(t)\rangle + (2\sigma-\nu)\int_0^t e^{-c_\nu \tau} \|\nabla u\|_2^2 d\tau \leq \|f\|_2^2,
$$
so
$$
\sup_{\tau \in [0,T]}\langle u^2(\tau)\rangle + c\int_0^T  \|\nabla u\|_2^2 d\tau \leq e^{c_\nu T}\|f\|_2^2
$$
for positive constant $c:=2\sigma-\nu$.

\smallskip

2.~Fix some $\varepsilon_n \downarrow 0$ and put $g=u_{\varepsilon_n}-u_{\varepsilon_m}$. Then, subtracting the equations for $u_{\varepsilon_n}$, $u_{\varepsilon_m}$ arguing as above, multiplying by $g$ and integrating, we obtain 
$$
\sup_{t \in [0,T]}\|g(t)\|_2^2 + c\int_0^T \|\nabla g\|_2^2 d\tau \leq e^{c_\nu T}\int_0^T |\langle (b_{\varepsilon_n}-b_{\varepsilon_m}) \cdot \nabla u_{\varepsilon_m}, g \rangle|d\tau,
$$
where we estimate the RHS as
\begin{equation}
\label{g0}
\tag{$\ast$}
\int_0^T |\langle (b_{\varepsilon_n}-b_{\varepsilon_m}) \cdot \nabla u_{\varepsilon_m}, g \rangle|d\tau \leq \biggl( \int_0^T \|(b_{\varepsilon_n}-b_{\varepsilon_m})g \|_2^2 d\tau \biggr)^{\frac{1}{2}}\,\biggl( \int_0^T \|\nabla u_{\varepsilon_m} \|_2^2 d\tau \biggr)^{\frac{1}{2}}.
\end{equation}
By Step 1, the second multiple in the RHS of \eqref{g0} is uniformly (in $m$) bounded. To estimate the first multiple, we can appeal to the a priori Gaussian upper bound on the heat kernel of $\Lambda_{\varepsilon_1,\varepsilon}$ (Theorem \ref{apr_thm2}) to obtain pointwise estimate 
\begin{equation}
\label{g}
\tag{$\ast\ast$}
|g(t,x)| \leq 2 \hat{c}_3 \langle k_{c_4}(t,x - \cdot)f(\cdot)\rangle \quad (=:F(t,x))
\end{equation}
on $[0,T] \times \mathbb R^d$.
We write ($R>0$)
$$
\int_0^T \|(b_{\varepsilon_n}-b_{\varepsilon_m})g \|_2^2 d\tau \leq \int_0^T \|\eta_R(b_{\varepsilon_n}-b_{\varepsilon_m})g \|_2^2 d\tau + \int_0^T \|(1-\eta_R)(b_{\varepsilon_n}-b_{\varepsilon_m})g \|_2^2 d\tau,
$$
where $0 \leq \eta_R \in C_c^\infty$, $0 \leq \eta \leq 1$, $\eta_R \equiv 1$ on $B(0,R)$. Now, for every $R>0$, the first term converges to $0$ as $n$, $m \rightarrow \infty$ since $|b_{\varepsilon_n}-b_{\varepsilon_m}| \rightarrow 0$ in $L^2_{\loc}$ and, by \eqref{g}, $g$ is uniformly in $n$, $m$ bounded on $[0,T] \times \mathbb R^d$. 
In turn, the second term is estimated using \eqref{g} and  $b_\varepsilon \in \mathbf{F}_\delta$:
\begin{align*}
\|(1-\eta_R)(b_{\varepsilon_n}-b_{\varepsilon_m})g(\tau) \|_2^2 & \leq \|(1-\eta_R)(b_{\varepsilon_n}-b_{\varepsilon_m})F(\tau) \|_2^2 \\
& \leq 2\delta \big\|\nabla [(1-\eta_R)F(\tau)]\big\|_2^2 + 2c(\delta) \|(1-\eta_R) F(\tau)\|_2^2, \quad \tau \in [0,T].
\end{align*}
Taking into account that $f \in C_c^\infty$, it is easily seen that the last expression  can be made as small as needed, uniformly in $\tau$, by selecting $R$ sufficiently large. 

It follows that the first multiple in \eqref{g0} tends to $0$ as $n,m \rightarrow \infty$.

Thus, $\{u_{\varepsilon_n} \equiv e^{-t\Lambda_{\varepsilon_n}}f\}_{n=1}^\infty$ is a Cauchy sequence in $L^\infty([0,T],L^2(\mathbb R^d))$. We set
$$
U^tf:=s\mbox{-}L^2\mbox{-}\lim_{\varepsilon_n \downarrow 0} e^{-t\Lambda_{\varepsilon_n}}f, \quad 0<t \leq T. 
$$
Next, we extend $U^t$, $0<t \leq T$ by continuity to whole $L^2$, and then, using the reproduction property of $e^{-t\Lambda_{\varepsilon_n}}$, extend it to all $0<t<\infty$. The strong continuity of $U^t$ and the other claimed properties now follow from the corresponding properties of $e^{-t\Lambda_{\varepsilon_n}}$. Set $e^{-t\Lambda}:=U^t$.

The proof of Proposition \ref{prop_approx} is completed.
\end{proof}

\begin{remark*}
The proof of Proposition \ref{prop_approx} can be made independent of Theorem \ref{apr_thm2} by working with appropriate weights, essentially repeating the proof of \cite[Theorem 4.3]{KiS2}.
\end{remark*}

\noindent\textit{Proof of Theorem 2A.} Theorem \ref{apr_thm2} and Proposition \ref{prop_approx} yield
$$
\|e^{-t\Lambda}\|_{1 \rightarrow \infty} \leq c_3e^{c_5 t}t^{-\frac{d}{2}}, \quad t>0.
$$
Hence, by the Dunford-Pettis Theorem, $e^{-t\Lambda}$ is an integral operator for every $t>0$.

Next, for every pair of balls $B_1$, $B_2 \subset \mathbb R^d$ we have, using again  Theorem \ref{apr_thm2} and Proposition \ref{prop_approx}:
$$
\langle \mathbf{1}_{B_1},e^{-t\Lambda}\mathbf{1}_{B_2} \rangle \leq c_3 e^{c_5 t} \langle \mathbf{1}_{B_1},e^{tc_4\Delta}\mathbf{1}_{B_2} \rangle.
$$
Since for every $t>0$ $e^{-t\Lambda}$ is an integral operator, the a posteriori Gaussian upper bound in Theorem 2A follows by applying the Lebesgue Differentiation Theorem.

\bigskip

\section{Proof of Theorem 2B}

\label{proof_prime}

Since $b \in \mathbf{MF}_\delta$, the vector fields $b_\varepsilon=e^{\varepsilon\Delta}b$ are $C^\infty$ smooth are bounded (following the proof of Claim \ref{claim1} in Section \ref{apost_sect0}) and are in class $\mathbf{MF}_\delta$ with the same constants $\delta$ and $c(\delta)$. Indeed, 
\begin{align*}
&\langle b_\varepsilon f,f\rangle=\langle b e^{\varepsilon\Delta}|f|^2\rangle=\langle b (h_\varepsilon)^2\rangle,\\
&\text{ where } h_\varepsilon=\sqrt{E_\varepsilon |f|^2} \text{ so } \nabla h_\varepsilon=h_\varepsilon^{-1}E_\varepsilon(|f|\nabla|f|,\\
&\|\nabla h_\varepsilon\|_2^2\leq \|\sqrt{E_\varepsilon(\nabla|f|)^2}\|_2^2=\|E_\varepsilon(\nabla|f|)^2\|_1\leq \|\nabla f\|_2^2
\end{align*}
and $\|h_\varepsilon\|_2 \leq \|f\|_2$, which clearly yields the required. 

Thus, in what follows, we assume that $b \equiv b_\varepsilon \in \mathbf{MF}_\delta$ is bounded and $C^\infty$ smooth. The assumption (2) of Theorem 2B ensures that ${\rm div\,}b_- \equiv E_\varepsilon {\rm div\,}b_- \in L^\infty \cap C^\infty$. Further, assumption (3) and Claim \ref{claim2} in Section \ref{kato_rem_sect} ensure that ${\rm div\,}b_+  \equiv E_\varepsilon {\rm div\,}b_+ \in \mathbf{K}^d_\nu$ with the same constants $\nu$ and $\lambda$.

The rest of the proof follows closely Sections \ref{fbd_6_subsect} and \ref{fbd_7_subsect} of the proof of Theorem 2A with the following modification. 
We need to estimate differently the term $\langle\alpha \cdot b, v^2 \rangle$. By $b \in \mathbf{MF}_\delta$,
\begin{align*}
|\langle\alpha \cdot b, v^2 \rangle| & = |\alpha \cdot \big \langle b v, v \big \rangle|  \leq |\alpha| |\langle bv,v\rangle| \\
& \leq |\alpha|\big( \delta \sigma^{-\frac{1}{2}}\|A^{1/2}v\|_2 \|v\|_2 +c(\delta)^{\frac{1}{2}}\|v\|^2_2 \big) \\
&\leq \frac{4 \gamma \delta}{p p^\prime} \|A^{1/2} v\|_2^2 + \bigg( c(\delta) + \alpha^2 \bigg(\frac{1}{4}+\frac{\delta}{4 \sigma\gamma}\frac{p p^\prime}{4}\bigg) \bigg)\|v\|^2_2, 
\end{align*}
and so
\begin{align*}
-\frac{1}{p} \frac{d}{d t}\|v\|_2^2 & \geq \frac{4}{p p^\prime}(1-\kappa -\gamma \delta) \|A^{1/2} v \|_2^2  -[\omega-c(\delta)] \|v\|_2^2 \\
& - \bigg[\frac{1}{4}+\xi +\frac{\delta}{4 \sigma\gamma}\frac{p p^\prime}{4} + \frac{1}{4 \kappa}\frac{p p^\prime}{4} 4 \bigg(\frac{p-2}{p}\bigg)^2\xi \bigg] \alpha^2 \|v\|_2^2.
\end{align*}
Take $\omega=c(\delta)$.
Choosing first $\gamma =\frac{\kappa}{\delta}$, $\kappa=\frac{1}{2}$, $p=2$, and then $\gamma =\frac{\kappa}{\delta}$, $\kappa=\frac{1}{4}, \;p \geq 4,$ we have
\[
\|u_\alpha(t) \|_2 \leq \|f\|_2 \exp\bigg[\frac{2(\frac{1}{4}+\xi)\sigma+\delta^2}{2\sigma}  \alpha^2 (t-s)\bigg]
\tag{$\star^a$}
\]
and
\begin{align*}
-\frac{1}{p} \frac{d}{d t}\|v\|_2^2 & \geq \frac{2}{p p^\prime} \|A^{1/2} v \|_2^2  
- \bigg[\frac{1}{4} + \xi + \frac{\delta^2}{\sigma} \frac{p p^\prime}{4} + \frac{p p^\prime}{4} 4 \bigg(\frac{p-2}{p}\bigg)^2\xi \bigg] \alpha^2 \|v\|_2^2 \\
& \geq \frac{2}{p p^\prime} \|A^{1/2} v \|_2^2  
- C_{\delta,\sigma,\xi} p \alpha^2 \|v\|_2^2.
\end{align*}
Therefore, 
\begin{align*}
 -\frac{d}{d t}\|v\|_2^2 & \geq \|A^{1/2}v\|_2^2 - C_{\delta,\xi,\sigma} p^2 \alpha^2 \|v \|_2^2 \\
& \geq \sigma C_N \|v\|_2^{2+\frac{4}{d}} \|v\|_1^{-\frac{4}{d}} - C_{\delta,\sigma,\xi} p^2 \alpha^2 \|v \|_2^2,
\end{align*}
so
\begin{align*}
\frac{d}{d t} \|v\|_2^{-4/d} & \geq \frac{2\sigma C_N}{d} \|v\|_1^{-\frac{4}{d}} - \frac{2}{d} C_{\delta,\sigma,\xi} p^2\alpha^2\|v \|_2^{-4/d}.
 \end{align*}
Now we iterate the last inequality in the same way as in the proof of Theorem 2A, arriving at
$$(t-s)^\frac{d}{4} \|u_\alpha (t)\|_\infty \leq C_g  e^{C_{\delta,\sigma,\xi} \alpha^2 (t-s)} W_2 (t),$$ 
where $W_p(t) := \sup \big[ (r-s)^\frac{d(p-2)}{4 p} \|u_\alpha (r) \|_p \mid s \leq r \leq t \big]$.
Taking into account $(\star^a),$ we arrive at $\|H^{t,s}_\alpha \|_{2 \to \infty} \leq (t-s)^{-d/4} C'_g e^{C'_{\delta,\sigma,\xi} \alpha^2 (t-s)}.$ The same bound holds for $\|H^{t,s}_\alpha \|_{1 \to 2}.$ To see this it is enough to note that, for $H^+ \equiv H^+(b),$ $(H^+_\alpha(b))^* = H^+_{-\alpha} (-b).$

\medskip

We obtain $e^{-tH^+}(x,y)\leq C e^{\omega t}t^{-\frac{d}{2}}e^{\alpha\cdot(y-x)+c_4\alpha^2t}$, $c_4=C_{\delta,\sigma,\xi}'$. Putting $\alpha=\frac{x-y}{2c_4}$, we obtain 
$(\mbox{UGB}^{h_+})$.
Now argue as in Section \ref{fbd_7_subsect}.

\bigskip

\section{Proof of Theorem 3A}

\label{sect_lb}

In the assumptions of Theorem 3A the upper bound of Theorem 2A is valid, so we only need to prove the lower bound.

We will prove the lower bound in Theorem 3A first for the smoothed out coefficients $a_{\varepsilon_1}$, $b_\varepsilon$ (Theorem \ref{apr_thm3} below). Recall that $b_\varepsilon$ are bounded and are in $\mathbf{F}_\delta$ with the same $c(\delta)$ (thus, independent of $\varepsilon$), see Claim \ref{claim1}. 

First, we assume $0 < t-s \leq 1$.

Write $$A_{\varepsilon_1}=-\nabla\cdot a_{\varepsilon_1}\cdot\nabla, \quad \Lambda_{\varepsilon_1,\varepsilon}=A_{\varepsilon_1}+b_\varepsilon\cdot\nabla, \quad \mydiv b_\varepsilon=E_\varepsilon \mydiv b_+-E_\varepsilon \mydiv b_-.$$ 

We have ${\rm div\,}b_\varepsilon \in \mathbf{K}^d_\nu$ with the same constants $\nu$, $\lambda(\nu)$ (see the beginning of the proof of Theorem 2A for details).

By Theorem \ref{apr_thm2}, the heat kernel $u(t,x;s,y)$ of $\Lambda_{\varepsilon_1,\varepsilon}$ satisfies, for all $x,y \in \mathbb R^d$, the Gaussian upper bound
$$
u(t,x;s,y) \leq \hat{c}_3 k_{c_4}(t-s;x-y), \quad 0 < t-s \leq 1,
$$
for generic constants $\hat{c}_3$, $c_4$.
The latter trivially yields the integral bound
\begin{equation}
\label{int_bd_two_star}
\sup_{x \in \mathbb{R}^d} \langle u^{2}(t,x;s,\cdot)\rangle \leq \hat{c} (t-s)^{-\frac{d}{2}}, \quad 0 < t-s \leq 1
\tag{$\circ\circ$}
\end{equation}
with generic $\hat{c}$. We will use this integral bound below.

\medskip

\subsection{$\hat{G}$-bound} \label{fbd_3_subsect}

Let us define Nash's function
\[
\hat{G}(s) 
:= \langle k_\beta(t-s,o-\cdot) \log u(t,x;s,\cdot) \rangle, \quad o = \frac{x+y}{2}
\]
for all $0 <t-s \leq 1$ and $x,y \in \mathbb{R}^d$ such that $2 |x-y| \leq \sqrt{\beta(t-s)}.$ 

\begin{proposition}
\label{g1_thm}
There exist generic constants $\beta$ and $\mathbb{C}$ such that 
$$
\hat{G}(t_s) \geq - \tilde{Q}(t-t_s) - \mathbb{C}, \qquad t_s=\frac{t+s}{2},
$$
where, recall, $\tilde{Q}(t-t_s) := \frac{d}{2} \log (t-t_s).$
\end{proposition}

\begin{proof}
In the proof of Proposition \ref{prop_G2} take $r=2$ and instead of \eqref{int_bd_one_star} from Section \ref{int_bd_sect} use integral bound \eqref{int_bd_two_star}.
\end{proof}

\medskip

\subsection{$G$-bound for $-\nabla \cdot a_{\varepsilon_1} \cdot \nabla + \nabla \cdot b_\varepsilon$} \label{fbd_4_subsect}

Let $u_*(t,x;s,y)$ denote the heat kernel of $\Lambda_*=A_{\varepsilon_1} + \nabla \cdot b_\varepsilon$.
Set
\[
G(t)
:= \langle k_\beta(t-s,o -\cdot)\log u_*(t, \cdot; s,y) \rangle,
\]
where  $0 \leq s < t < \infty$ and $ x,y \in \mathbb{R}^d$ such that $2 |x-y| \leq \sqrt{\beta(t-s)}.$

\begin{proposition}
\label{g2_thm}
Let $\beta$ and $\mathbb{C}$ be (generic) constants defined in Proposition \ref{g1_thm} and Proposition \ref{mb_thm}, respectively. Then
\[
G(t)
\geq - \tilde{Q}(t-s) - \mathbb{C}.
\]
\end{proposition}

\begin{proof}
We repeat the proof of Proposition \ref{prop_G3} with $p=2$.
\end{proof}

\subsection{Lower bound for the auxiliary operator $-\nabla \cdot a_{\varepsilon_1} \cdot \nabla + b_\varepsilon \cdot \nabla - E_\varepsilon{\rm div\,}b_-$.}

\label{fbd_5_subsect}

Set $$H^- := \Lambda_{\varepsilon_1,\varepsilon} - E_\varepsilon\mydiv b_-.$$ Let $H^{t,s}f$ denote the solution of
\[ 
	\left\{ \begin{array}{rcl}
		-\frac{d}{d t} H^{t,s} f = H^- H^{t,s} f & , & 0 < t-s \leq 1, \\
	0 \leq f \in L^1 \cap L^\infty. & 
	\end{array} \right.
	\tag{$CP_{H^-}$}
\]
Let $h(t):= H^{t,s}f.$ \textit{It is seen (for example from the Duhamel formula) that $u(t,x;t_s,y) \leq h(t,x;t_s,y)$ and $u_*(t_s,x;s,y) \leq h(t_s,x;s,y),$ where $u(t), u_*(t)$ solve $(C P_\Lambda), (C P_{\Lambda_*})$ respectively}. It is seen that
\[
h(t,x;s,y) \geq (4 \pi \beta (t-t_s))^{d/2} \langle k_\beta (t-t_s,o -\cdot) h(t,x;t_s,\cdot) h(t_s,\cdot;s,y) \rangle,
\]
\[
k_\beta (t-t_s,o -\cdot)=k_\beta (t_s-s,o -\cdot),
\]
and, for all $2 |x-y| \leq \sqrt{\beta(t-t_s)}\;,$ due to Proposition \ref{g1_thm} and Proposition \ref{g2_thm},
\begin{align*}
\log h(t,x;s,y) & \geq \log (4 \pi \beta)^{d/2} + \tilde{Q}(t-t_s)\\
& + \langle k_\beta (t-t_s,o-\cdot) \log u(t,x;t_s,\cdot) \rangle + \langle k_\beta (t-t_s,o-\cdot) \log u_*(t_s,\cdot; s,y) \rangle \\
& \geq \log (4 \pi \beta)^{d/2} - \tilde{Q}(t-t_s)  -2 \mathbb{C} \\
& = - \tilde{Q}(t-s)  -2 \mathbb{C} + \log (8 \pi \beta)^{d/2},
\end{align*}
i.e.\,we have proved a lower Gaussian bound for $h(t,x;s,y)$ but only for  $2 |x-y| \leq \sqrt{\beta(t-t_s)}$.
Now, the standard argument (see e.g.\,\cite[Theorem 3.3.4]{Da}) gives

\begin{theorem}
\label{lb_div}
There exist generic constants $c_1$, $c_2 > 0$ such that, for all $x,y \in \mathbb{R}^d$
\begin{equation}
c_1 k_{c_2}(t-s,x-y) \leq h(t,x;s,y)
\tag{$\mbox{LGB}^{h_-}$}
\end{equation}
for all $0 < t-s\leq 1$.

\end{theorem}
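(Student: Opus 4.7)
The plan. Essentially everything that is needed for Theorem \ref{lb_div} is already assembled in the preceding subsections, so the proof reduces to two small pieces: (i) read off the near-diagonal lower bound from Propositions \ref{g1_thm} and \ref{g2_thm} via the reproduction property of $h$, and then (ii) run the standard Davies chaining to upgrade it to a Gaussian bound in the full range of $x, y$.

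For step (i), I would start exactly from the inequality already displayed in the excerpt:
\[
h(t,x;s,y) \geq (4\pi\beta(t-t_s))^{d/2}\,\big\langle k_\beta(t-t_s,o-\cdot)\, h(t,x;t_s,\cdot)\, h(t_s,\cdot;s,y)\big\rangle,
\]
which is the Chapman--Kolmogorov reproduction property followed by insertion of the Gaussian weight $k_\beta(t-t_s,o-\cdot)$. Since $k_\beta(t-t_s,o-\cdot)$ is a probability density, Jensen's inequality applied to the logarithm turns the right-hand side into
\[
\tfrac{d}{2}\log(4\pi\beta(t-t_s)) + \big\langle k_\beta(t-t_s,o-\cdot)\log h(t,x;t_s,\cdot)\big\rangle + \big\langle k_\beta(t-t_s,o-\cdot)\log h(t_s,\cdot;s,y)\big\rangle.
\]
The two inner averages are bounded below by applying the monotone inequality $\log h \geq \log u$ (resp.\ $\log h \geq \log u_*$) coming from the Duhamel-type domination $u \le h$, $u_* \le h$, and then invoking Proposition \ref{g1_thm} on the first term and Proposition \ref{g2_thm} on the second. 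Summing the two $-\tilde{Q}(t-t_s) - \mathbb{C}$ contributions and combining with the $(4\pi\beta)^{d/2}$ prefactor gives, exactly as computed in the excerpt,
\[
h(t,x;s,y) \geq c_1 (t-s)^{-d/2} \qquad \text{whenever}\qquad 2|x-y| \leq \sqrt{\beta(t-t_s)},
\]
with $c_1$ generic. This is the ``small gain.''

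For step (ii), I would carry out the standard ``small gains yield large gain'' procedure as in \cite[Theorem 3.3.4]{Da}. Fix $x, y$ with $4|x-y|^2 > \beta(t-s)/2$, choose an integer $n \geq 1$ comparable to $|x-y|^2/(\beta(t-s))$, set $t_k = s + k(t-s)/n$, and interpolate points $y = z_0, z_1, \ldots, z_n = x$ uniformly along the segment $[y,x]$. The integer $n$ is chosen so that each jump $|z_k - z_{k-1}|$ lies inside the near-diagonal region for the time step $t_k - t_{k-1} = (t-s)/n$. Iterating the reproduction property of $h$,
\[
h(t,x;s,y) \geq \int_{B_1} \!\cdots\! \int_{B_{n-1}} \prod_{k=1}^{n} h(t_k,\zeta_k;t_{k-1},\zeta_{k-1})\, d\zeta_1\cdots d\zeta_{n-1}
\]
(with $\zeta_0 = y$, $\zeta_n = x$, and $B_k$ a small ball around $z_k$ sized so that the near-diagonal condition from step (i) is preserved for every factor), and applying the near-diagonal bound from step (i) to each factor, yields a product estimate of the form $(\text{const})^n\, n^{dn/2} (t-s)^{-dn/2} \prod_k |B_k|$. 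Optimizing the choice of $n \asymp |x-y|^2/(t-s)$ balances the exponential factor against the Gaussian tail and produces the advertised bound $c_1 k_{c_2}(t-s, x-y)$ with generic $c_1, c_2 > 0$.

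The only spot that requires real care is the bookkeeping in (ii): one must verify that all constants produced by $n$-fold iteration of the near-diagonal estimate are governed purely by the single generic constant in step (i), so that the resulting $c_1, c_2$ in the final Gaussian form remain generic and independent of $\varepsilon_1, \varepsilon$. Since the near-diagonal constant in (i) depends only on $d, \sigma, \xi, \delta_a, c(\delta_a)$ (via $\beta$ and $\mathbb{C}$), and since Propositions \ref{g1_thm} and \ref{g2_thm} are uniform in $\varepsilon_1, \varepsilon$, this propagates through the chaining without loss. This is the step I expect to be the main obstacle, but it is by now a standard combinatorial exercise codified in \cite[Theorem 3.3.4]{Da}.
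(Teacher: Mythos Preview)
Your proposal is correct and follows essentially the same route as the paper: the near-diagonal bound is obtained exactly as you describe (reproduction property, insertion of the Gaussian weight, Jensen, then the dominations $u\le h$, $u_*\le h$ combined with Propositions \ref{g1_thm} and \ref{g2_thm}), and the paper then likewise defers to the standard ``small gains yield large gain'' argument of \cite[Theorem~3.3.4]{Da} for the off-diagonal extension. The only cosmetic difference is that the paper does not spell out the chaining combinatorics you sketch in step (ii), simply citing \cite{Da}.
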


\bigskip

\subsection{Lower bound for $-\nabla \cdot a_{\varepsilon_1} \cdot \nabla + b_\varepsilon \cdot \nabla$.}

\label{fbd_8_subsect}

Let $u(t,x;s,y)$ be the heat kernel of $\Lambda_{\varepsilon_1,\varepsilon}$.

\begin{theorem}
\label{apr_thm3}
There exist generic constants $c_0 \geq 0$ and $c_i>0$ ($i=1,2$) such that, for all $x, y \in \mathbb{R}^d$ and $0 \leq s < t < \infty,$
$$
  c_1 k_{c_2}(t-s;x-y) e^{-c_0 (t-s)} \leq u(t,x;s,y).
$$
\end{theorem}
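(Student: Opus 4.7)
The plan is to combine the Gaussian lower bound on the heat kernel $h(t,x;s,y)$ of $H^-=\Lambda_{\varepsilon_1,\varepsilon}-E_\varepsilon \mydiv b_-$ (Theorem \ref{lb_div}) with a Gaussian upper bound on an auxiliary kernel, via a pointwise H\"older-type inequality between three heat kernels. Set $V:=E_\varepsilon \mydiv b_-\ge 0$ and consider the one-parameter family
\[
L_\mu := A_{\varepsilon_1} + b_\varepsilon\cdot \nabla - \mu V, \qquad \mu\in[0,p'],
\]
so that $L_0=\Lambda_{\varepsilon_1,\varepsilon}$, $L_1=H^-$, and $L_{p'}=H^-_{p'}$. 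Since all coefficients are smooth and $V\in L^\infty$, the Feynman--Kac formula gives
\[
u_{L_\mu}(t,x;s,y) = u_\Lambda(t,x;s,y)\,\mathbb{E}_{x,y}^{t,s}\bigl[e^{\mu\int_s^t V(X_r)\,dr}\bigr],
\]
where $\mathbb{E}_{x,y}^{t,s}$ is the bridge expectation attached to $\Lambda_{\varepsilon_1,\varepsilon}$. Since the cumulant generating function $\mu\mapsto \log \mathbb{E}[e^{\mu Y}]$ is convex, so is $\mu\mapsto \log u_{L_\mu}(t,x;s,y)$. Writing $1=(1/p)\cdot 0+(1/p')\cdot p'$ with $p\ge 2$, $1/p+1/p'=1$, convexity yields the key pointwise inequality
\[
u_\Lambda(t,x;s,y)\ \ge\ \frac{u_{H^-}(t,x;s,y)^{p}}{u_{H^-_{p'}}(t,x;s,y)^{p/p'}}.
\]

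Next, I would establish a Gaussian upper bound on $u_{H^-_{p'}}$. By hypothesis $|\mydiv b|\in\mathbf{K}^d_\nu$, so by Claim \ref{claim2} one has $V\in\mathbf{K}^d_\nu$ uniformly in $\varepsilon$ and $p'V\in\mathbf{K}^d_{p'\nu}$. Choosing $\nu$ small enough that $p'\nu$ stays below the Khasminskii threshold, Duhamel's identity
\[
e^{-(t-s)H^-_{p'}} = e^{-(t-s)\Lambda_{\varepsilon_1,\varepsilon}} + p'\int_s^t e^{-(t-r)\Lambda_{\varepsilon_1,\varepsilon}}\,V\,e^{-(r-s)H^-_{p'}}\,dr,
\]
iterated and combined with the Gaussian upper bound on $u_\Lambda$ from Theorem \ref{apr_thm2} and the standard Kato-class convolution estimate, produces
\[
u_{H^-_{p'}}(t,x;s,y)\ \le\ \bar c_3\,k_{\bar c_4}(t-s;x-y)\,e^{\bar c_5(t-s)}
\]
with generic constants (depending also on $p$).

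Plugging Theorem \ref{lb_div} and this upper bound into the pointwise inequality, on $0\le s<t\le 1$,
\[
u_\Lambda(t,x;s,y)\ \ge\ \frac{c_1^{\,p}\,k_{c_2}(t-s;x-y)^{p}}{\bar c_3^{\,p/p'}\,k_{\bar c_4}(t-s;x-y)^{p/p'}\,e^{\bar c_5(t-s)p/p'}}.
\]
Using $p-p/p'=1$, the $t$-powers collapse correctly to $t^{-d/2}$, and the exponent in $|x-y|^2/(t-s)$ acquires coefficient $-\tfrac14\bigl(p/c_2-(p-1)/\bar c_4\bigr)$, which is strictly negative whenever $\bar c_4>(p-1)c_2/p$. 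This is arranged by making $\nu$ (equivalently $p'\nu$) small enough, e.g.\ with $p=p'=2$. This proves a short-time Gaussian lower bound; iterating the reproduction identity $u(t,x;s,y)\ge \int u(t,x;\tau,z)u(\tau,z;s,y)\,dz$ on unit subintervals (cf.\ \cite[Theorem 3.3.4]{Da}) extends it to all $0\le s<t<\infty$ at the cost of the factor $e^{-c_0(t-s)}$.

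The main obstacle is the quantitative control in the upper-bound step: the constants $\bar c_3,\bar c_4,\bar c_5$ arising from the Khasminskii/Duhamel iteration depend on $p'$, and the effective Gaussian spread $\bar c_4$ must beat $(p-1)c_2/p$ so that the quotient remains a genuine (non-degenerate) Gaussian. Securing this compatibility --- by exploiting both the freedom to take $\nu$ arbitrarily small in the hypothesis of Theorem \ref{main_thm} and the flexibility to choose $p\ge 2$ --- is the technical heart of the argument.
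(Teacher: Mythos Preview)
Your approach is essentially the same as the paper's: the pointwise inequality $h_1 \le u^{1/p}\,h_{p'}^{1/p'}$ (which you obtain via Feynman--Kac and convexity of the cumulant generating function; the paper cites the Lie--Trotter product formula), combined with Theorem~\ref{lb_div} for the lower bound on $h_1$ and a Duhamel/Kato-class argument for a Gaussian upper bound on $h_{p'}$, after which reproduction extends from $t-s\le 1$ to all times. The only minor variation is the base operator in the Duhamel step (you perturb $\Lambda_{\varepsilon_1,\varepsilon}$ using Theorem~\ref{apr_thm2}; the paper perturbs $H^+_{p'}$ using a modification of Theorem~\ref{aux_ub_thm}), and one remark on your last paragraph: the constant-compatibility issue $\bar c_4>(p-1)c_2/p$ is not a genuine obstacle, since the H\"older inequality already yields the near-diagonal estimate $u(t,x;s,y)\ge C(t-s)^{-d/2}$ for $|x-y|\le\kappa\sqrt{t-s}$ (both $k_{c_2}$ and $k_{\bar c_4}$ being comparable to $(t-s)^{-d/2}$ there), and the standard chaining argument you already cite then produces the full Gaussian lower bound without any need to tune $\nu$ or $p$.
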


\begin{proof}

Let $h_1(t,x;s,y)$, $h_{p^\prime}(t,x;s,y)$ denote the heat kernels of $H^-=-\nabla \cdot a_{\varepsilon_1} \cdot \nabla + b_\varepsilon \cdot \nabla-E_\varepsilon\mydiv b_-$,   $H^-_{p'}=-\nabla \cdot a_{\varepsilon_1} \cdot \nabla + b_\varepsilon \cdot \nabla - p^\prime E_\varepsilon\mydiv b_-$, respectively.
The pointwise inequality
\begin{equation}
\label{point_ineq}
\tag{$\star$}
h_1(t,x;s,y) \leq \big[ u(t,x;s,y) \big]^{1/p} \big[ h_{p^\prime}(t,x;s,y) \big]^{1/p^\prime}, \;\; p >1,
\end{equation}
is a standard consequence of the Lie-Trotter Product Formula (for the proof, if needed, see \cite{HS}). 

1.~In the RHS of \eqref{point_ineq}, we bound $h_{p^\prime}(t,x;s,y)$ from above as follows. We write the Duhamel series for $h_{p^\prime}(t,x;s,y)$, with $H^-_{p'}$ viewed as $H_{p'}^+=\Lambda_{\varepsilon_1,\varepsilon} + p'E_\varepsilon{\rm div\,}b_+$ perturbed by $-p'\big(E_\varepsilon\mydiv b_++E_\varepsilon\mydiv b_-\big)$, and estimate its terms from above using a straighforward modification of Theorem \ref{aux_ub_thm} and appealing to $|\mydiv b| \in \mathbf{ K}^{d}_{\nu}$. 
We obtain
\begin{equation*}
h_{p'}(t,x;s,y) \leq \tilde{c}_3 k_{\tilde{c}_4}(t-s;x-y)
\end{equation*}
for all $x, y \in \mathbb{R}^d$ and $0 \leq s < t \leq T$, for generic constants $\tilde{c}_i$ ($i=3,4,5$).

2.~In the LHS of \eqref{point_ineq}, we bound $h_1(t,x;s,y)$ from below using Theorem \ref{lb_div}.  

Now, 1-2 yield the required lower bound on $u(t,x;s,y)$ for $x$, $y \in \mathbb R^d$, $0 < t-s \leq 1$.
Next, the reproduction property of $u(t,x;s,y)$ gives the required lower bound for all $x, y \in \mathbb{R}^d$ and $0 \leq s< t <\infty$. 

If $c(\delta_a)=\lambda(\nu)=0$, then we work over $0 \leq t < \infty$ from the beginning, obtaining a global in time lower bound.
\end{proof}

\subsection{A posteriori lower bound}

We are in position to prove Theorem 3A.
Theorem \ref{apr_thm3} and Proposition \ref{prop_approx} yield for every pair of balls $B_1$, $B_2 \subset \mathbb R^d$
$$
c_1 e^{-c_0 t} \langle \mathbf{1}_{B_1},e^{tc_2 \Delta}\mathbf{1}_{B_2} \rangle \leq \langle \mathbf{1}_{B_1},e^{-t\Lambda}\mathbf{1}_{B_2} \rangle,
$$
so an application of the Lebesgue Differentiation Theorem gives the a posteriori Gaussian lower bound in Theorem 3A.

\bigskip

\section{Proof of Theorem 3B} 

The upper bound follows from Theorem 2B.
The proof of the lower bound under the assumption ${\rm div}\,b=0$ was given \cite{S2}. Below we relax that assumption to ``${\rm div}\,b \in \mathbf{K}^d_\nu$ for $\nu$ sufficiently small'' by modifying the proof in \cite{S2} and then arguing as in the proof Theorem 3A.

Few remarks are in order. 

For every $\varepsilon>0$, $b_\varepsilon \in \mathbf{MF}_{\delta}$ with the same constants $\delta$, $c(\delta)$, and ${\rm div\,}b_\varepsilon \in \mathbf{K}^d_\nu$ with the same constants $\nu$, $\lambda(\nu)$ (for details, see the beginning of the proof of Theorem 2B and of Theorem 2A, respectively). In particular,
\begin{equation}
\label{divb_fbd}
\tag{$\star$}
\||{\rm div\,}b_\varepsilon|^{\frac{1}{2}}f\|_2^2 \leq \nu \|\nabla f\|_2^2 + \lambda \nu \|f\|_2^2, \quad f \in W^{1,2}.
\end{equation}

In what follows, we put $$b \equiv b_\varepsilon, \quad {\rm div\,}b \equiv {\rm div\,}b_\varepsilon =E_\varepsilon {\rm div\,}b_+ - E_\varepsilon {\rm div\,}b_-.$$
We denote $ E_\varepsilon {\rm div\,}b_\pm$, with some abuse of notation, by ${\rm div\,}b_\pm$.

We will establish the lower bound for $0 < t-s \leq 1$. Then the reproduction property will yield the lowe bound for all $0 < t-s <\infty$.

\subsection{$\hat{G}$-bound for $-\nabla \cdot a \cdot \nabla + b \cdot \nabla$} \label{fbd_44_subsect}

By Theorem 2B, the heat kernel $u(t,x;s,y)$ of $\Lambda_{\varepsilon_1,\varepsilon}=-\nabla \cdot a_{\varepsilon_1} \cdot \nabla + b_\varepsilon \cdot \nabla$ satisfies, for all $x,y \in \mathbb R^d$, the Gaussian upper bound
\begin{equation}
\label{ub_11}
\tag{${\rm UGB}_u$}
u(t,x;s,y) \leq \hat{c}_3 k_{c_4}(t-s;x-y), \quad 0 < t-s \leq 1,
\end{equation}
for generic constants $\hat{c}_3$, $c_4$.

The next proposition is valid under weaker assumptions than those of Theorem 3B, namely, it suffices to assume that \eqref{ub_11} holds, and 
$$
\|({\rm div\,}b_-)^{\frac{1}{2}}f\|_2^2 \leq \nu \|\nabla f\|_2^2 + \lambda \nu \|f\|_2^2, \quad f \in W^{1,2}
$$
with e.g.\,$\nu \leq \frac{\sigma}{8}$.

\begin{proposition}
\label{g1_thm_3}
Let $x,y \in \mathbb R^d$, $o=\frac{x+y}{2}$, $t_s=\frac{t+s}{2}$.
There exist generic constants $\beta$ and $\mathbb{C}$ such that 
\begin{align*}
\hat{G}(t_s) 
 :=\langle k_\beta(t-t_s,o-\cdot) \log u(t,z;t_s,\cdot) \rangle \geq - \tilde{Q}(t-t_s) - \mathbb{C}, \quad \text{ for all }z \in B(o,\sqrt{t-t_s}),
\end{align*}
where, recall, $\tilde{Q}(t-t_s) := \frac{d}{2} \log (t-t_s).$
\end{proposition}

\begin{proof}[Proof of Proposition \ref{g1_thm_3}]

Fix $\epsilon>0$ and define
\[ 
G_\epsilon(\tau):=\langle k_\beta(t-t_s,o-\cdot) \log \big[\epsilon k_\beta(t-t_s,o-\cdot)  +u(t,z;\tau,\cdot) \big] \rangle,
\]
where $\tau \in [t_s,\frac{t+t_s}{2}]$.
Then
\[
\hat{G}(t_s)=\inf_{\epsilon>0} G_\epsilon (t_s).
\]


Below we write for brevity:
\[
G_\epsilon(\tau) \equiv \big \langle \Gamma \log \big[ \epsilon\Gamma + U \big] \big \rangle \equiv \big \langle \Gamma \log \big[ \epsilon \Gamma + U(\tau) \big] \big \rangle,
\]
where $\Gamma \equiv \Gamma_\beta \equiv k_\beta(t-t_s,o-\cdot), \; U \equiv U(\tau) \equiv u(t,z;\tau,\cdot).$

Also, set
\[
V := c_0 (t-t_s)^{d/2} \big[\epsilon \Gamma+ U \big], \; c_0 = (4 \pi c_4)^{d/2} e^{-1}\big[\epsilon + \hat{c}_3 e^{\frac{1}{4c_4}}\big]^{-1}.
\]
If $\beta \geq 2c_4,$ then clearly  
\[
V(\tau,y) \exp \frac{|o-y|^2}{4 \beta (t-t_s)} \leq e^{-1} \text{ for all } y \in \mathbb{R}^d, \; \epsilon \in ]0, 1] \text{ and } \tau \in \big [t_s,\frac{t+t_s}{2} \big].
\]
In particular, $-\log V \geq 1$.

Let us calculate $- \partial_\tau G_\epsilon(\tau).$ We have
\begin{align*}
- \partial_\tau G_\epsilon(\tau) & = \bigg \langle \Gamma \frac{-\partial_\tau U}{\epsilon\Gamma + U} \bigg \rangle = \bigg \langle \frac{\Gamma}{\epsilon \Gamma + U} (\nabla \cdot a \cdot \nabla + \nabla \cdot b)U \bigg \rangle \\
& = \bigg \langle \nabla \log V \cdot a \Gamma \cdot \frac{\nabla U}{\epsilon \Gamma+ U} \bigg \rangle - \bigg \langle \nabla \Gamma \cdot a \cdot \frac{\nabla U}{\epsilon\Gamma + U}  \bigg \rangle + \bigg \langle \Gamma \frac{b \cdot\nabla U}{\epsilon\Gamma + U} \bigg \rangle + \bigg \langle \Gamma \frac{{\rm div}b\;U}{\epsilon\Gamma + U} \bigg \rangle \\
& = \big \langle \nabla \log V \cdot a \Gamma \cdot \nabla \log V \big \rangle -\bigg \langle \nabla \log V \cdot a \Gamma \cdot \frac{\epsilon\nabla \Gamma}{\epsilon \Gamma+ U} \bigg \rangle - \big \langle \nabla \Gamma \cdot a \cdot \nabla \log V \big \rangle \\
&+\bigg \langle \nabla \Gamma \cdot a \cdot \frac{\epsilon\nabla \Gamma}{\epsilon\Gamma + U}  \bigg \rangle + \big \langle \Gamma b \cdot \nabla \log V \big \rangle - \bigg \langle \Gamma \frac{b \cdot\epsilon\nabla \Gamma}{\epsilon\Gamma + U} \bigg \rangle + \bigg \langle \Gamma \frac{U{\rm div}b}{\epsilon\Gamma + U} \bigg \rangle.
\end{align*}
All the terms except for $\big \langle \Gamma \frac{U{\rm div}b}{\epsilon\Gamma + U} \big \rangle$ will be treated as in \cite{S2}. 
Setting $\mathcal{N}:= \big \langle \nabla \log V \cdot a \Gamma \cdot \nabla \log V \big \rangle$, applying quadratic inequality and estimating $\big \langle \Gamma \frac{U{\rm div}b}{\epsilon\Gamma + U} \big \rangle \geq -\big \langle \Gamma{\rm div}b_-\big \rangle$, we have
\begin{align*}
- \partial_\tau G_\epsilon(\tau) & \geq \mathcal{N} - 2 \mathcal{N}^{1/2} \bigg \langle \nabla \Gamma \cdot \frac{a}{\Gamma} \cdot \nabla \Gamma \bigg \rangle^{1/2} + \big \langle \Gamma b \cdot \nabla \log V \big \rangle-\langle |b\cdot\nabla\Gamma|\rangle-\big \langle \Gamma{\rm div}b_-\big \rangle
\end{align*}

\begin{remark}
Note that now we cannot estimate the term $\big \langle \Gamma b \cdot \nabla \log V \big \rangle$ as in the proof of Theorem \ref{second_thm} or Theorem 3A since for any $p>1$ (close to $1$) there are $b\in \mathbf{MF}_\delta$ with $|b|\notin L^p_\loc$.
\end{remark}

Hence
\begin{align*}
- \partial_\tau G_\epsilon(\tau)  & \geq (1-\gamma)\mathcal{N}-\frac{\xi}{\gamma}\bigg\langle\frac{(\nabla\Gamma)^2}{\Gamma}\bigg\rangle + \big \langle b\cdot\nabla\Gamma, - \log V \big \rangle  \\
& - \langle \Gamma {\rm div} b_-,-\log V\rangle -\langle |b||\nabla\Gamma|\rangle - \big \langle \Gamma {\rm div}b_- \big \rangle,
\end{align*}
where $0<\gamma<1$ will be chosen later.

We have:
$$
\big \langle \frac{(\nabla \Gamma)^2}{\Gamma} \big \rangle = \frac{d}{2 \beta} \frac{1}{t-t_s}, \qquad \langle |b||\nabla\Gamma|\rangle\leq \frac{\sqrt{2}}{\sqrt{\beta (t-t_s)}}\langle |b|\Gamma_{2\beta}\rangle.
$$
Further, applying $b \in \mathbf{MF}_\delta$ and \eqref{divb_fbd}, we estimate
\begin{align*}
\langle |b|\Gamma_{2\beta}\rangle & \leq \delta\sqrt{\|\nabla\sqrt{\Gamma_{2\beta}}\|^2_2 + c(\delta)\|\sqrt{\Gamma_{2\beta}}\|^2_2}\|\sqrt{\Gamma_{2\beta}}\|_2 \\
& =\delta \|\nabla\sqrt{\Gamma_{2\beta}}\|^2_2\|\sqrt{\Gamma_{2\beta}}\|_2+\sqrt{c(\delta)}\|\sqrt{\Gamma_{2\beta}}\|^2_2 \\
&  =\frac{\delta}{4}\frac{\sqrt{d}}{\sqrt{\beta(t-t_s)}} +\sqrt{c(\delta)} \\
& \leq \frac{\sqrt{d}\delta + \sqrt{2}\sqrt{\beta}\sqrt{c(\delta)}}{4\sqrt{\beta(t-t_s)}} \quad (\text{we used $0<t-t_s \leq \frac{1}{2}$}),
\end{align*}
\begin{align*}
\langle \Gamma {\rm div\,}b_-  \rangle & \leq \frac{\nu}{4}\frac{d}{2 \beta} \frac{1}{t-t_s} + \lambda\nu  \leq \frac{\frac{d\nu}{2} + 2\beta \lambda \nu  }{4 \beta(t-t_s)},
\end{align*}
\begin{align*}
|\big \langle b \cdot \nabla \Gamma , -\log V \big \rangle| & + \langle \Gamma {\rm div}b_-, -\log V \rangle \\
& \leq \frac{1}{\sqrt{\beta (t-t_s)}} \big \langle |b| \Gamma, -\log V \big \rangle^{1/2} \bigg \langle |b| \frac{|o-\cdot|^2}{4 \beta (t-t_s)} \Gamma, -\log V \bigg \rangle^{1/2} + \langle \Gamma {\rm div}b_-, -\log V \rangle\\
& \equiv \frac{1}{\sqrt{\beta (t-t_s)}} A_0^{1/2} A_2^{1/2} + A_3 \leq \frac{1}{2 \sqrt{\beta (t-t_s)}}(A_0+A_2)+A_3,
\end{align*}
where
$$
A_0(\tau):= \big \langle |b| \Gamma (-\log V) \big \rangle=:\langle |b|,\varphi \rangle,
$$
$$
A_2(\tau):=\bigg \langle |b| \frac{|o-\cdot|^2}{4 \beta (t-t_s)} \Gamma (-\log V) \bigg \rangle=:\langle |b|,\psi\rangle,
$$
$$
A_3(\tau):=\langle \Gamma {\rm div}b_-, -\log V \rangle.
$$
Denoting $$Y(\tau):= G_\epsilon(\tau) +\tilde{Q}(t-\tau)$$ and gathering the above estimates, we obtain 
\begin{align*}
-\partial_\tau Y(\tau)  \geq (1-\gamma)\mathcal{N} -\frac{K_0}{4\beta(t-t_s)}    - \frac{1}{2 \sqrt{\beta (t-t_s)}} \big [ A_0(\tau) +A_2(\tau) \big ]-A_3(\tau),
\end{align*}
where $K_0:=\sqrt{2d}\delta + 2\sqrt{\beta}\sqrt{c(\delta)}+\frac{2\xi d}{\gamma} + \frac{d\nu }{2} + 2\beta \lambda \nu$. 
Multiplying this inequality by $e^{\mu(\tau)},$ $$\mu(\tau):= - \frac{K(t-\tau)}{\beta (t-t_s)},$$ where constant $K$ will be chosen later, we obtain
\begin{align*}
- \partial_\tau \big( e^{\mu(\tau)} Y(\tau) \big) \geq e^{\mu(\tau)} & \bigg [ (1-\gamma)\mathcal{N}(\tau) -  Y(\tau) \partial_\tau \mu(\tau) -\frac{K_0}{4\beta(t-t_s)}\\
& - \frac{1}{2\sqrt{\beta (t-t_s)}}\big[A_0(\tau) +A_2(\tau) \big] - A_3(\tau)\bigg ].
\end{align*}
We note that $$Y(\tau) < c, \quad \tau \in [t_s,(t+t_s)/2],$$ 
where the constant $c=\log (1+\hat{c}_3)$ with $\hat{c}_3$ from  $u(t,x;\tau,\cdot) \leq \hat{c}_3 (t-\tau)^{-d/2}.$ 
Indeed, for $\epsilon\leq (4\pi \beta)^\frac{d}{2}$,
\[
G_\epsilon(\tau)= \big \langle \Gamma \log (\epsilon \Gamma + U) \big \rangle \leq \big \langle \Gamma \big \rangle \log \big[(1+\tilde{c}) (t-\tau)^{-/2}] < -\tilde{Q}(t-\tau) + \log (1+\tilde{c}).
\]
Thus, avoiding division on possible zero, we obtain 
\begin{equation}
\partial_\tau \big(e^{\mu(\tau)} (Y(\tau) -c) \big)^{-1}  \geq \big[(1-4 \gamma) \mathcal{N}(\tau)  +\mathcal{M}(\tau)\big]e^{-\mu(\tau)} (Y(\tau)-c)^{-2},
\tag{$\ast$}
\end{equation}
where
\begin{align*}
\mathcal{M}(\tau) & := 3 \gamma \mathcal{N}(\tau) - ( Y(\tau) - c) \partial_\tau \mu(\tau) -\frac{K_0 }{4\beta(t-t_s)}- \frac{1}{2 \sqrt{\beta(t-t_s)}}\big[A_0(\tau) +A_2(\tau) \big ] - A_3(\tau).
\end{align*}

Take $\gamma:=\frac{1}{8}$.

\begin{lemma} $\mathcal{M}(\tau) \geq 0$ for all $\tau \in [t_s, (t+t_s)/2]$, for $c$ sufficiently large,  $\nu \leq \frac{\sigma}{8}$.
\end{lemma}

\begin{proof}[Proof of Lemma]
Recall $\varphi=\Gamma(-\log V)$. Then
\[
\nabla \varphi = \bigg( \frac{\nabla \Gamma}{\Gamma} +\frac{\nabla \log V}{\log V} \bigg) \varphi, 
\]
and so
\begin{align*}
\frac{(\nabla \varphi)^2}{\varphi} & = \bigg( \frac{\nabla \Gamma}{\Gamma} +\frac{\nabla (-\log V)}{-\log V} \bigg)^2 \varphi \\
& \leq 2 \bigg( \frac{(\nabla \Gamma)^2}{\Gamma} (-\log V) +\frac{(\nabla \log V)^2}{-\log V} \Gamma \bigg) \\
& \leq 2 \bigg( \frac{|o-\cdot|^2}{(2\beta(t-t_s))^2}\Gamma (-\log V) + \Gamma (\nabla \log V)^2 \bigg) \;\;(\text{since } - \log V >1).
\end{align*}
Using the identity $\frac{|o-\cdot|^2}{4 \beta (t-t_s)} \Gamma = \beta (t-t_s) \Delta \Gamma +\frac{d}{2} \Gamma,$ we obtain
\begin{align*}
\frac{1}{2}\bigg \langle \frac{(\nabla \varphi)^2}{\varphi} \bigg \rangle & \leq \big \langle \Gamma (\nabla \log V )^2 \big \rangle + \frac{1}{\beta (t-t_s)}\big \langle \big(\beta (t-t_s) \Delta \Gamma + \frac{d}{2} \Gamma \big)(-\log V) \big \rangle \\
& \leq \sigma^{-1} \mathcal{N} +\big \langle \nabla \Gamma, \nabla \log V \big \rangle + \frac{d}{2 \beta(t-t_s) } \langle \varphi \rangle \\
& \leq 2 \sigma^{-1} \mathcal{N} +\frac{1}{4} \bigg\langle \frac{(\nabla \Gamma)^2}{\Gamma} \bigg \rangle + \frac{d}{2 \beta (t-t_s)} \langle \varphi \rangle\\
& \leq 2 \sigma^{-1} \mathcal{N} + \frac{d}{8 \beta (t-t_s)}+ \frac{d}{2 \beta (t-t_s)} \langle \varphi \rangle.
\end{align*}

Thus, by \eqref{divb_fbd},
\begin{align*}
A_3(\tau) & \leq \frac{\nu}{4}\left\langle \frac{(\nabla \varphi)^2}{\varphi} \right\rangle  + \lambda \nu \langle \varphi \rangle \\
& \leq  \sigma^{-1} \nu\mathcal{N} + \frac{d\nu}{16 \beta (t-t_s)}+ \frac{d\nu + 2 \beta \lambda \nu}{4 \beta (t-t_s)} \langle \varphi \rangle.
\end{align*}

We estimate $A_0(\tau)$ and $A_2(\tau)$ as in \cite{S2}. For the sake of completeness, we provide the details.
Using the inequalities $(B+C+D)^{1/2}\leq (B+D)^{1/2}+C^{1/2}$ and $E^{1/2}(B+D)^{1/2}M^{1/2} \leq (B+D)\varepsilon + (4 \varepsilon)^{-1} E M$ for positive numbers with $\varepsilon = \sigma \gamma/2,$ we obtain
\begin{align*}
\frac{A_0(\tau)}{2 \sqrt{\beta (t-t_s)}} & \leq \frac{\delta}{4 \sqrt{\beta (t-t_s)}} \bigg( 2\sigma^{-1} \mathcal{N}(\tau)+ \frac{d}{8 \beta (t-t_s)}+ \frac{d}{2 \beta (t-t_s)} \langle \varphi \rangle \bigg)^{1/2}\langle \varphi \rangle^{1/2} + \frac{\sqrt{c(\delta)}}{2 \sqrt{\beta (t-t_s)}}\langle \varphi \rangle \\
& \leq \gamma \mathcal{N}(\tau) + \frac{c^*_0}{2 \beta (t-t_s)} \langle \varphi \rangle + \frac{\sigma \nu d}{16 \beta (t-t_s)},
\end{align*}
where $c^*_0 =c^*_0(d,\sigma, \xi, \delta, c(\delta), \gamma) > 0.$

Analogous calculation shows
\[
\frac{A_2(\tau)}{2 \sqrt{\beta (t-t_s)}} \leq \gamma \mathcal{N}(\tau) + \frac{c^*_2}{2 \beta (t-t_s)} \langle \varphi \rangle + \frac{\sigma \gamma d}{16 \beta (t-t_s)},
\]
where $c^*_2 =c^*_2(d,\sigma, \xi, \delta, c(\delta), \gamma) > 0$. Let us only note that in order to estimate $\big \langle (\nabla \psi)^2/ \psi \big \rangle$ in the same way as $\big \langle (\nabla \varphi)^2/ \varphi \big \rangle$ we need the inequality
\[
\bigg \langle \frac{(\nabla \log V)^2}{-\log V} \frac{|o-\cdot|^2}{4 \beta (t-t_s)} \Gamma \bigg \rangle \leq \big \langle \Gamma (\nabla \log V)^2 \big \rangle
\]
which is valid since $-\log V  > \frac{|o-\cdot|^2}{4 \beta (t-t_s)}$ (the inequality $-\log V \geq 1$ would not be enough). The latter is the reason why in the definition of $G_\epsilon (\tau)$ we have ``$\epsilon \Gamma$'' rather than simply ``$\epsilon$'' (as in the proofs of Theorems \ref{second_thm} and 3A).

Thus, we obtain
\begin{align*}
& \frac{1}{2 \sqrt{\beta(t-t_s)}} \big[ A_0(\tau) +A_2(\tau) \big ] + A_3(\tau) \\
& \leq (2 \gamma+\sigma^{-1}\nu) \mathcal{N}(\tau) + \frac{\sigma \gamma d + \frac{d\nu}{2}}{8 \beta (t-t_s)} + \bigg( \frac{c_0^*+c_2^* + \frac{d\nu}{4}+\frac{\beta\lambda\nu}{2}}{\beta (t-t_s)}\bigg) \langle \varphi \rangle.
\end{align*}
By our assumption, $\nu \leq \sigma \gamma$. Thus,
\begin{align*}
\mathcal{M}(\tau) \geq &  - \frac{ 2K_0 + \sigma \gamma d + \frac{d\nu}{2}}{8 \beta (t-t_s)} - ( Y(\tau) - c) \partial_\tau \mu(\tau)\\
& - \bigg( \frac{c_0^*+c_2^*  + \frac{d\nu}{4}+\frac{\beta\lambda\nu}{2}}{\beta (t-t_s)} \bigg) \langle \varphi \rangle.
\end{align*}
Set $c^*=\frac{2K_0 + \sigma \gamma d + \frac{d\nu}{2}}{8}$. Recalling that $ \partial_\tau \mu(\tau) =\frac{K}{\beta (t-t_s)}$ and fixing $K$ by $K=c_0^*+c_2^*  + \frac{d\nu}{4}+\frac{\beta\lambda\nu}{2}$, we conclude that
\begin{align*}
\mathcal{M}(\tau) & \geq \frac{c}{2} \frac{K-\frac{2}{c}c^*}{\beta (t-t_s)} + \bigg(\frac{c}{2}- Y(\tau) - \langle \varphi \rangle \bigg) \frac{K}{\beta (t-t_s)}.
\end{align*}
Now, 
\begin{align*}
\langle \varphi \rangle  = \big \langle \Gamma (- \log V) \big \rangle & = -\big \langle \Gamma\log \big[ \epsilon \Gamma + U \big ] \big \rangle -\big \langle \Gamma \big \rangle \log \bigg[c_0 (t-t_s)^{d/2} \bigg] \\
& = - G_\epsilon (\tau) - \log \bigg[c_0 (t-t_s)^{d/2} \bigg],
\end{align*}
 or $\langle \varphi \rangle = - Y(\tau) + \frac{d}{2} \log \frac{t-\tau}{t-t_s} - \log c_0,$ and so $-Y(\tau)-\langle \varphi \rangle \geq \log c_0\geq \log \frac{(4 \pi c_4 )^{d/2}}{2e\hat{c}_3}-\frac{1}{4c_4}$.
To end the proof, it remains to select $c$ sufficiently large.
\end{proof}

We now return to $(\ast).$ Recall that $\gamma = 1/8.$ Since $\mathcal{N} \geq \sigma \mathcal{N}_1, \mathcal{N}_1 := \big \langle \Gamma |\nabla \log V |^2 \big \rangle,$ Lemma yields
\[
\partial_\tau \big(e^{\mu(\tau)} (Y(\tau) -c) \big)^{-1}  \geq \frac{\sigma}{2} \mathcal{N}_1(\tau) e^{-\mu(\tau)} (Y(\tau)-c)^{-2}.
\tag{$\ast \ast $}
\]
By the Spectral gap inequality,
\begin{align*}
\mathcal{N}_1 & \geq \frac{1}{2 \beta (t-t_s)} \big \langle \Gamma |\log V -\langle \Gamma \log V \rangle |^2 \big \rangle \\
& = \frac{1}{2 \beta (t-t_s)} \big \langle \Gamma |\log \big[\epsilon \Gamma + U \big] -\langle \Gamma \log \big[\epsilon \Gamma + U \big] \rangle |^2 \big \rangle \\
& \equiv \frac{1}{2 \beta (t-t_s)} \big \langle \Gamma |\log \big[\epsilon \Gamma + U \big] - G_\epsilon |^2 \big \rangle.
\end{align*}
Note that $\frac{1}{2}|o-\cdot|^2\leq |z-\cdot|^2 +|o-z|^2$. Clearly, $\frac{1}{t-t_s} \leq \frac{1}{t-\tau} \leq \frac{2}{t-t_s}$ and $|z-o| \leq \sqrt{t-t_s}$ combined imply that $-\frac{|z-\cdot|^2}{4c_4(t-\tau)}\leq -\frac{|o-\cdot|^2}{8c_4(t-t_s)} + \frac{|o-z|^2}{2c_4(t-t_s)},$ and hence
$$
k_{c_4}(t,z;\tau, \cdot) \leq 2^\frac{d}{2} e^\frac{1}{2c_4} k_{2c_4} (t,o;t_s,\cdot).
$$
Therefore, by \eqref{ub_11}, $U \leq \hat{c}_3 k_{c_4}(t,z;\tau,\cdot)$ and $\beta = 2 c_4$, 
$$\Gamma \geq C U, \quad  C^{-1}=\hat{c}_3 2^d e^\frac{1}{\beta},$$
\[
\mathcal{N}_1 \geq \frac{C}{2 \beta (t-t_s)} \big \langle U |\log \big[\epsilon \Gamma + U \big] - G_\epsilon |^2 \big \rangle,
\]
and so, by $\langle U \rangle = 1,$
\[
\mathcal{N}_1 \geq \frac{C}{2 \beta (t-t_s)} \big \langle U |\log \big[\epsilon \Gamma + U \big] - G_\epsilon | \big \rangle^2.
\]
Now,
\begin{align*}
\big \langle U |\log \big[\epsilon \Gamma + U \big] - G_\epsilon | \big \rangle & \geq \big \langle U \log \big[\epsilon \Gamma + U \big] \big \rangle - G_\epsilon \big \langle U \big \rangle \\
& \geq \big \langle U \log U \big \rangle - G_\epsilon \big \langle U \big \rangle \\
& \geq - G_\epsilon(\tau) - \tilde{Q}(t-\tau) - \mathcal{C} \\
& \equiv - Y(\tau) - \mathcal{C}.
\end{align*}
Here we again have used $\langle U \rangle = 1$ and the Nash entropy estimate $-\big \langle U \log U \big \rangle \leq \tilde{Q}(t-\tau) + \mathcal{C}.$
(We note that this simple estimate requires a proof: use $e^\frac{Q}{d}\leq CM$, see Claim \ref{_cl_6} in Section \ref{fbd_2_subsect} and, by \eqref{ub_12}, $M\leq C\sqrt{t-t_s}$.)

\textit{Case} (a): For all $\tau \in \big[t_s, \frac{t+t_s}{2}\big],$
\[
-Y(\tau) -c -2 \mathcal{C} \geq 0.
\]
Here $c$ is from $(\ast \ast)$. Then $- Y(\tau) - \mathcal{C} \geq \frac{1}{2} (-Y(\tau) +c ) > \mathcal{C} > 0$ and hence
\[
\mathcal{N}_1(\tau) \geq  \frac{C}{8 \beta (t-t_s)} \big(-Y(\tau) + c \big)^2.
\]
Thus, by $(\ast \ast),$ $$\big( c - Y(t_s) \big)^{-1} \geq \frac{\sigma C}{16 (t-t_s)} e^{-\mu(t_s)} \int_{t_s}^{(t+t_s)/2} e^{- \mu(\tau)} d \tau \geq \frac{\sigma C e^{3 K/(4\beta)}}{16 (t-s)} \int_{t_s}^{(t+t_s)/2} d \tau,$$ and so $$c - Y(t_s) \leq \frac{32}{\sigma C e^{3 K/(4\beta)}} \leq \frac{2^{d +5} \hat{c}_3}{\sigma},$$ or $G_\epsilon (t_s) \geq -\tilde{Q}(t-t_s) +c - \frac{2^{d +5} \hat{c}_3}{\sigma}.$

\textit{Case} (b): For some $\tau \in \big[t_s, \frac{t+t_s}{2}\big],$
\[
-Y(\tau) -c -2 \mathcal{C} < 0.
\]
By $(\ast \ast),$ $$\big(e^{\mu(\tau)} (Y(\tau) -c) \big)^{-1} \geq \big(e^{\mu(t_s)} (Y(t_s) -c) \big)^{-1},$$ or $$c -Y(t_s) \leq e^{\mu(\tau)-\mu(t_s)} (c - Y(\tau)).$$ Therefore,
$$c-Y(t_s) \leq e^{\mu(\tau)-\mu(t_s)} 2 (c + \mathcal{C}) \leq e^{\frac{K}{4 \beta}} 2 (c + \mathcal{C}),$$ or $G_\epsilon (t_s) \geq -\tilde{Q}(t-t_s) +c -  e^{\frac{K}{4 \beta}} 2 (c + \mathcal{C}).$ 
\end{proof}

\medskip

\subsection{$G$-bound for $-\nabla \cdot a \cdot \nabla + \nabla \cdot b$}

Set $\Lambda_*=A + \nabla \cdot b$, $A=-\nabla \cdot a \cdot \nabla.$
Let $u_*(t,x;s,y)$ denote the heat kernel of $\Lambda_*$.
By Theorem 2B, by duality, $u_*(t,x;s,y)$ satisfies the Gaussian upper bound
\begin{equation}
\label{ub_12}
\tag{${\rm UGB}_{u_*}$}
u_*(t,x;s,y) \leq \hat{c}_3 k_{c_4}(t-s;x-y), \quad 0 < t-s \leq 1,
\end{equation}
for generic constants $\hat{c}_3$, $c_4$. 

The next proposition is valid under weaker assumptions than those in Theorem 3B, that is, it suffices to assume \eqref{ub_12} and 
$$
\|({\rm div\,}b_+)^{\frac{1}{2}}f\|_2^2 \leq \nu \|\nabla f\|_2^2 + \lambda \nu \|f\|_2^2, \quad f \in W^{1,2}.
$$

\begin{proposition}
\label{g2_thm_3}
Let $\beta$ and $\mathbb{C}$ be (generic) constants defined in Proposition \ref{g1_thm_3}. 
Set $o=\frac{x+y}{2}$, $x,y \in \mathbb R^d$, $t_s=\frac{t+s}{2}$.
Then
\[
G(t_s) 
:=\langle k_\beta(t_s-s,o -\cdot)\log u_*(t_s, \cdot; s,z) \geq - \tilde{Q}(t_s-s) - \mathbb{C}, \quad z \in B(o,\sqrt{t_s-s}).
\]
\end{proposition}

\begin{proof}
The proof repeats the proof of Proposition \ref{g1_thm_3}, except that we have to deal with the positive part ${\rm div\,}b_+$ of the divergence of $b$.
\end{proof}

\medskip

Armed with Propositions \ref{g1_thm_3} and \ref{g2_thm_3}, we can repeat the argument in Sections \ref{fbd_5_subsect} and \ref{fbd_8_subsect}, using the assumption ${\rm div\,}b \in \mathbf{K}^d_\nu$. This ends the proof of Theorem 3B.

\bigskip

\appendix

\section{Extrapolation Theorem}

\label{app_B}

\begin{theorem}[{T.\,Coulhon-Y.\,Raynaud}]
\label{thm_cr}
Let $U^{t,s}: L^1 \cap L^\infty \rightarrow L^1 + L^\infty$ be a two-parameter evolution family of operators:
\[U^{t,s} = U^{t,\tau}U^{\tau,s}, \quad 0 \leq s < \tau < t \leq \infty.
\]
Suppose that, for some $1 \leq p < q < r \leq \infty,$ $\nu > 0,$ $M_1$ and $M_2,$ the inequalities
\[
\| U^{t,s} f \|_p \leq M_1 \| f \|_p \quad \text{ and } \quad \| U^{t,s} f \|_r \leq M_2 (t-s)^{-\nu} \|  f \|_q
\]
are valid for all $(t,s)$ and $f \in L^1 \cap L^\infty.$ Then
\[
\| U^{t,s} f \|_r \leq M (t-s)^{-\nu/(1-\beta)} \| f \|_p ,
\]
where $\beta = \frac{r}{q}\frac{q-p}{r-p}$ and $M = 2^{\nu/(1-\beta)^2} M_1 M_2^{1/(1-\beta)}.$
\end{theorem}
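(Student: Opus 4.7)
The plan is to combine log-convexity of $L^p$-norms with the evolution-family property $U^{t,s}=U^{t,\tau}U^{\tau,s}$ and to iterate via halving of the time interval. First I would verify that the stated exponent $\beta=\frac{r(q-p)}{q(r-p)}$ is precisely the Hölder interpolation parameter characterized by $\frac{1}{q}=\frac{1-\beta}{p}+\frac{\beta}{r}$, so that the log-convexity inequality
$$
\|g\|_q\leq \|g\|_p^{1-\beta}\|g\|_r^{\beta},\qquad g\in L^p\cap L^r,
$$
is available. This is the only analytic input beyond the hypotheses.

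Next, fix $f\in L^1\cap L^\infty$ and set $\tau:=\frac{s+t}{2}$. The evolution identity combined with the two standing bounds gives the one-step estimate
$$
\|U^{t,s}f\|_r\leq M_2\bigl(\tfrac{t-s}{2}\bigr)^{-\nu}\|U^{\tau,s}f\|_q\leq M_2\bigl(\tfrac{t-s}{2}\bigr)^{-\nu}M_1^{1-\beta}\|f\|_p^{1-\beta}\|U^{\tau,s}f\|_r^{\beta}.
$$
Introducing $\Phi_f(h):=h^{\nu/(1-\beta)}\|U^{s+h,s}f\|_r$ and $A:=2^{\nu/(1-\beta)}M_1^{1-\beta}M_2$, this rearranges into the self-improving recursion
$$
\Phi_f(h)\leq A\,\|f\|_p^{1-\beta}\,\Phi_f(h/2)^{\beta}.
$$

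Iterating $n$ times yields
$$
\Phi_f(h)\leq A^{(1-\beta^n)/(1-\beta)}\,\|f\|_p^{1-\beta^n}\,\Phi_f(h/2^n)^{\beta^n},
$$
and I would then pass to $n\to\infty$. Since $f\in L^1\cap L^\infty\subset L^q$, the second hypothesis already supplies the crude a priori bound $\Phi_f(h')\leq M_2\|f\|_q(h')^{\nu\beta/(1-\beta)}$, from which $\beta^n\log\Phi_f(h/2^n)\to 0$ (using $n\beta^n\to 0$ since $\beta<1$), hence $\Phi_f(h/2^n)^{\beta^n}\to 1$. Sending $n\to\infty$ therefore produces $\Phi_f(h)\leq A^{1/(1-\beta)}\|f\|_p$, and computing $A^{1/(1-\beta)}=2^{\nu/(1-\beta)^2}M_1 M_2^{1/(1-\beta)}=M$ gives the conclusion on $L^1\cap L^\infty$, which extends to $L^p$ by density.

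The one delicate point is justifying $\Phi_f(h/2^n)^{\beta^n}\to 1$ without first knowing that $\Phi_f$ is globally bounded; this is exactly what the crude polynomial estimate above buys, because $\beta^n$ dominates any $n\log 2$ or $\log h$ factor in the logarithm. Everything else is routine: the interpolation inequality is standard log-convexity, the one-step estimate is a direct application of the hypotheses and the semigroup property, and the final constant $M$ emerges from summing the geometric series $\sum_{k\geq 0}\beta^k=1/(1-\beta)$ in the iterated recursion.
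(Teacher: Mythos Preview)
Your argument is correct and is the standard Coulhon--Raynaud extrapolation proof. Note that the paper does not actually contain a proof of this statement: it simply refers the reader to \cite[Appendix F]{KiS2}, so there is nothing to compare against beyond saying that your route is the classical one. One tiny imprecision: from the crude bound $\Phi_f(h')\leq M_2\|f\|_q\,(h')^{\nu\beta/(1-\beta)}$ you only get $\limsup_n \Phi_f(h/2^n)^{\beta^n}\leq 1$, not convergence to $1$; but $\limsup\leq 1$ is all the iterated inequality needs, so the conclusion stands.
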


For the proof see e.g.\,\cite[Appendix F]{KiS2}.

%
%

\end{document}